\newcommand{\nc}{\newcommand}
\nc{\dmo}{\DeclareMathOperator}
\nc{\nt}{\newtheorem}
\newtheorem{thm}{{\bf Theorem}}[section]
\newtheorem{lem}[thm]{{\bf Lemma}}
\newtheorem{rem}[thm]{Remark}
\newtheorem{ex}[thm]{Example}
\numberwithin{equation}{section}
\begin{document} 

\title[]{A construction  of  pseudo-Anosov braids  with small normalized entropies}

\author[S. Hirose]{%
Susumu Hirose
}
\address{%
Department of Mathematics,  
Faculty of Science and Technology, 
Tokyo University of Science, 
Noda, Chiba, 278-8510, Japan}
\email{%
hirose\_susumu@ma.noda.tus.ac.jp
}

\author[E. Kin]{%
   Eiko Kin
}
\address{%
       Department of Mathematics, Graduate School of Science, Osaka University Toyonaka, Osaka 560-0043, JAPAN
}
\email{%
        kin@math.sci.osaka-u.ac.jp
}

\subjclass[2010]{%
	57M99, 37E30
}

\keywords{%
	mapping class groups, pseudo-Anosov, dilatation, normalized entropy, fibered $3$-manifolds, braid group}

\date{%
March 12, 2020.
}


	
\begin{abstract} 
Let 
$b$ be a pseudo-Anosov braid whose permutation has a fixed point and 
let $M_b$ be the mapping torus by the pseudo-Anosov homeomorphism defined on the genus $0$ fiber $F_b$ 
associated with $b$. 
We prove that there is a $2$-dimensional  subcone $\mathcal{C}_0$ contained in the 
fibered cone $ \mathcal{C}$ of $F_b$ 
such that the fiber $F_a$ for each primitive integral class $a \in \mathcal{C}_0$ has genus $0$. 
We also give a constructive description of the monodromy $ \phi_a: F_a \rightarrow F_a$ of the fibration on $M_b$ over the circle, 
and consequently  provide  a construction of many sequences of pseudo-Anosov braids with small normalized entropies.  
As an application 
we prove that the smallest entropy among skew-palindromic braids with $n$ strands is  comparable to $1/n$, 
and the smallest entropy among elements of the odd/even spin mapping class groups of genus $g$ 
is comparable to $1/g$.

\end{abstract}
\maketitle

\section{Introduction} 

\begin{center}
\begin{figure}
\includegraphics[width=3in]{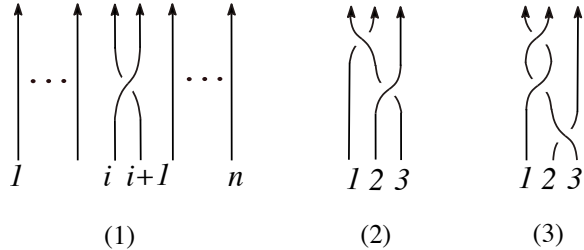}
\caption{(1) $\sigma_i $.  
(2) $\sigma_1^{-1} \sigma_2 $ with the permutation 
$1 \mapsto 2$, $2 \mapsto 3$, $3 \mapsto 1$. 
(3) $\sigma_1^2 \sigma_2^{-1} $ whose permutation has a fixed point.}
\label{fig_sigma_i}
\end{figure}
\end{center}

\begin{center}
\begin{figure}
\includegraphics[width=3.7in]{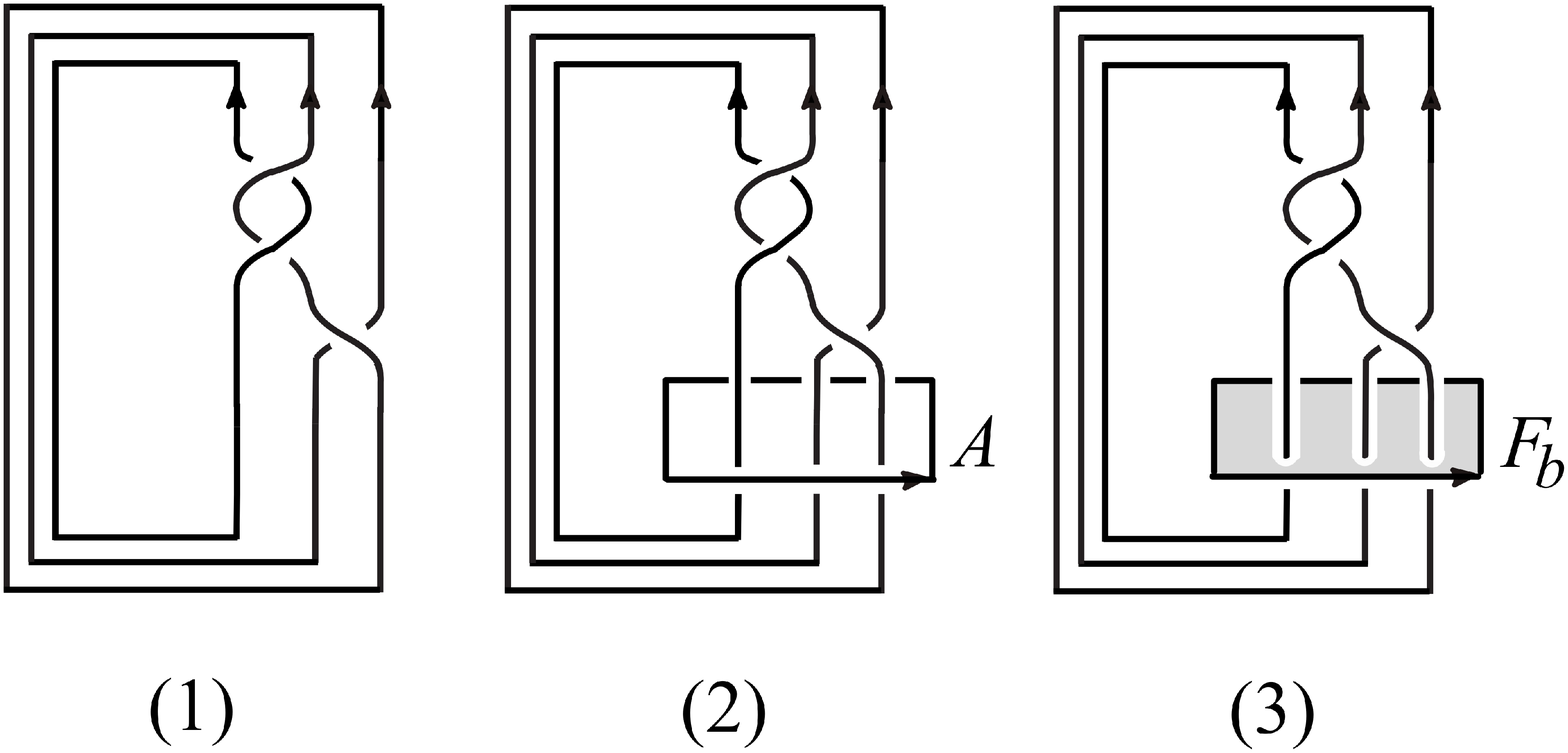}
\caption{
$b:= \sigma_1^2 \sigma_2^{-1}$. 
(1) $\mathrm{cl}(b)$. 
(2) $\mathrm{br}(b)$. 
(3) $F_b \hookrightarrow M_b$.}
\label{fig_closure}
\end{figure}
\end{center}

Let $\Sigma= \Sigma_{g,n}$ be an orientable surface of genus $g$ with $n $ punctures for $n\ge 0$.  
We set $\Sigma_g= \Sigma_{g,0}$. 
By mapping class group $\mathrm{Mod}(\Sigma_{g,n})$, 
we mean the group of isotopy classes of orientation preserving self-homeomorphisms 
on $\Sigma_{g,n}$ preserving  punctures setwise. 
By Nielsen-Thurston classification, 
elements  in $\mathrm{Mod}(\Sigma)$ are classified into three types: 
periodic, reducible, pseudo-Anosov \cite{Thurston88,FM12}. 
For $\phi  \in \mathrm{Mod}(\Sigma)$ 
we choose a representative $\Phi \in \phi$ and 
consider the mapping torus 
$M_{\phi}= \Sigma \times {\Bbb R}/ \sim$,   
where $\sim$ identifies $(x,t+1)$ with $ (\Phi(x),t)$ 
for $x \in \Sigma$ and $t \in {\Bbb R}$. 
Then $\Sigma$ is a fiber of a fibration on $M_{\phi}$ over the circle $S^1$ and 
$\phi$ is called the {\it monodromy}. 
A theorem by Thurston \cite{Thurston98} 
asserts that 
$M_{\phi}$ admits a hyperbolic structure of finite volume 
if and only if $\phi$ is pseudo-Anosov.

For a pseudo-Anosov element $\phi \in \mathrm{Mod}(\Sigma)$ 
there is a representative $\Phi: \Sigma \rightarrow \Sigma$ of $\phi$ 
called a {\it pseudo-Anosov homeomorphism} with the following property: 
$\Phi$ admits a pair of transverse measured foliations 
$(\mathcal{F}^u, \mu^u)$ and 
$(\mathcal{F}^s, \mu^s)$ 
and  a constant $\lambda = \lambda(\phi)  >1$ depending on $\phi$ such that 
$\mathcal{F}^u$ and $\mathcal{F}^s$ are invariant under $\Phi$, and 
$\mu^u$ and $\mu^s$ are uniformly multiplied by $\lambda$ and $\lambda^{-1} $ under $\Phi$. 
The constant $\lambda(\phi)$ is called the {\it dilatation} 
and $\mathcal{F}^u$ and $\mathcal{F}^s$ are called the {\it unstable} and {\it stable foliation}. 
We call the logarithm $\log(\lambda(\phi))$  the {\it entropy}, and call 
$$\mathrm{Ent}(\phi)= |\chi(\Sigma)| \log(\lambda(\phi))$$ 
the {\it normalized entropy} of $\phi$, 
where $\chi(\Sigma)$ is the Euler characteristic of $\Sigma$. 
Such normalization of the entropy is suited for the context of $3$-manifolds~\cite{FLM11,KM18}.

Penner~\cite{Penner91} proved that  
if $\phi \in  \mathrm{Mod}(\Sigma_{g,n})$ is pseudo-Anosov, then 
\begin{equation}
\label{equation_Penner}
\frac{\log 2}{12g- 12+ 4n} \le \log(\lambda(\phi)).
\end{equation}
See also \cite[Corollary 2]{KM18}. 
For a fixed surface $\Sigma$, 
the set 
$$\{\log \lambda(\phi)\ |\ \phi \in \mathrm{Mod}(\Sigma)\  \mbox{is pseudo-Anosov}\}$$ 
is a closed, discrete subset of ${\Bbb R}$ (\cite{AY81}).   
For any subgroup or subset $G \subset \mathrm{Mod}(\Sigma)$ 
let $\delta(G)$ denote the minimum of $\lambda(\phi)$ over all pseudo-Anosov elements $\phi \in G$. 
Then $\delta(G)  \ge \delta(\mathrm{Mod}(\Sigma))$. 
 We write $f \asymp h$ 
 if there is a universal constant $P>0$  such that 
 $1/P \le f/h \le P$. 
 It is proved by Penner~\cite{Penner91} that 
 the minimal entropy among pseudo-Anosov elements in $\mathrm{Mod}(\Sigma_g)$ on the closed surface of genus $g$ satisfies 
 $$\log \delta(\mathrm{Mod}(\Sigma_g)) \asymp \dfrac{1}{g}.$$
 See also \cite{HK06,Tsai09,Valdivia12} for other sequences of mapping class groups. 
 
 For any $P>0$, 
 consider the set $\Psi_P$ consisting of all pseudo-Anosov homeomorphisms $\Phi: \Sigma \rightarrow \Sigma$ 
 defined on any surface $\Sigma$ 
 with the normalized entropy $|\chi(\Sigma)| \log \lambda(\Phi) \le P$. 
 This is an infinite set in general (take $P > 2 \log (2+ \sqrt{3})$ for example) and 
 is well-understood in the context of hyperbolic fibered $3$-manifolds. 
 The universal finiteness theorem by Farb-Leininger-Margalit~\cite{FLM11} states that 
 the set of homeomorphism classes of mapping tori of pseudo-Anosov homeomprhisms 
 $\Phi^{\circ}: \Sigma^{\circ} \rightarrow \Sigma^{\circ}$ is finite, 
 where $\Phi^{\circ}: \Sigma^{\circ} \rightarrow \Sigma^{\circ}$ is the fully punctured pseudo-Anosov homeomprhism 
 obtained from $\Phi \in \Psi_P$. (Clearly $\lambda(\Phi^{\circ}) = \lambda(\Phi)$.)  
 In other words 
 such $\Phi^{\circ}: \Sigma^{\circ} \rightarrow \Sigma^{\circ}$ is a monodromy of a fiber 
  in some fibered cone 
 for a hyperbolic fibered $3$-manifold in the finite list determined by $P$.   
 Thus $3$-manifolds in the finite list govern all pseudo-Anosov elements in $\Psi_P$. 
 It is natural to ask the dynamics and  a constructive description of  elements in $ \Psi_P$. 
 There are some results about this question by several authors~\cite{Dehornoy13,Hironaka14,Kin15,KT11,Valdivia12}, 
 but it is not completely understood. 
 In this paper we restrict our attention to the pseudo-Anosov elements  in $ \Psi_P$ defined on the genus $0$ surfaces, 
 and provide an approach for a concrete description of those elements.

Let $B_n$ be the braid group with $n$ strands. 
The group $B_n$ is generated by the braids $\sigma_1, \cdots, \sigma_{n-1}$ as  in Figure~\ref{fig_sigma_i}. 
Let $\mathcal{S}_n$ be the symmetric group, the group of bijections of $\{1, \ldots, n\}$ to itself. 
A permutation $\mathcal{P} \in \mathcal{S}_n$ has a {\it fixed point} if $\mathcal{P}(i)= i$ for some $i$. 
We have a surjective homomorphism 
$\pi: B_n \rightarrow \mathcal{S}_n$ 
which sends each $\sigma_j$ to the transposition $(j,j+1)$.

The closure $\mathrm{cl}(b)$ of a braid $b \in B_n$ 
 is a knot or link in the $3$-sphere $S^3$. 
The {\it braided link} 
$$\mathrm{br}(b) = \mathrm{cl}(b) \cup A$$  
is a link in $S^3$ obtained from $\mathrm{cl}(b)$ 
with its braid axis $A$ 
(Figure~\ref{fig_closure}). 
Let $M_b$ denote the exterior of $\mathrm{br}(b)$ which is a $3$-manifold with boundary. 
It is easy to find an $(n+1)$-holed sphere $F_b$ in $M_b$ 
(Figure~\ref{fig_closure}(3)).  
Clearly $F_b$ is a fiber of a fibration on $M_b \rightarrow S^1$ 
and its monodromy $\phi_b: F_b \rightarrow F_b$ is determined  by  $b$. 
We call $F_b$ the {\it $F$-surface} for $b$.

A braid $b \in B_n$ is {\it periodic} (resp. {\it reducible}, {\it pseudo-Anosov}) 
if the associated mapping class $f_b \in \mathrm{Mod}(\Sigma_{0,n+1})$ is 
of the corresponding  type (Section~\ref{subsection_frombraids}). 
If $b$ is pseudo-Anosov, 
then the {\it dilatation} $\lambda(b)$  is defined by $\lambda(f_b)$ and 
the {\it normalized entropy} $\mathrm{Ent}(b)$  is defined by $\mathrm{Ent}(f_b) $. 
The following theorem is due to Hironaka-Kin~\cite[Proposition~3.36]{HK06} 
together with the observation by Kin-Takasawa~\cite[Section~4.1]{KT11}.

\begin{thm}
\label{thm_motivation}
There is  a sequence of pseudo-Anosov braids $z_n  \in B_n$ 
such that 
$\mathrm{Ent}(z_n) \ne  2 \log (2+ \sqrt{3})$, 
$M_{z_n} \simeq M_{\sigma_1^2 \sigma_2^{-1}}$ for each $n \ge 3$ 
and $\mathrm{Ent}(z_n) \to 2 \log (2+ \sqrt{3})$ as $n \to \infty$.  
\end{thm}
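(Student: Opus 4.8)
The plan is to realize all of the braids $z_n$ as monodromies of fibrations of the \emph{single} hyperbolic $3$-manifold $M_b$ with $b=\sigma_1^2\sigma_2^{-1}$, and then to extract the entropy statement from the continuity of the normalized entropy on the fibered cone. First I would fix the base case. The $F$-surface $F_b$ is a $4$-holed sphere $\Sigma_{0,4}$, the monodromy $\phi_b$ is pseudo-Anosov with dilatation $\lambda(\phi_b)=2+\sqrt3$, and since $|\chi(\Sigma_{0,4})|=2$ this gives $\mathrm{Ent}(\phi_b)=2\log(2+\sqrt3)$; this pins down the target value. Because $\phi_b$ is pseudo-Anosov, $M_b$ is hyperbolic by Thurston \cite{Thurston98}, so its Thurston norm has a top-dimensional fibered cone $\mathcal{C}\subset H^1(M_b;{\Bbb R})$ containing the class $\alpha_b$ dual to $[F_b]$.

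Next I would invoke Thurston's fibration theorem together with Fried's theorem: every primitive integral class $a\in\mathcal{C}$ is dual to a fiber $F_a$ of a fibration $M_b\to S^1$ whose monodromy $\phi_a$ is pseudo-Anosov, and $a\mapsto\log\lambda(\phi_a)$ extends to a continuous function on $\mathcal{C}$ that is homogeneous of degree $-1$. Since the Thurston norm satisfies $\|a\|=|\chi(F_a)|$ on the fibered cone and is homogeneous of degree $1$, the normalized entropy $\mathrm{Ent}(a)=\|a\|\log\lambda(\phi_a)$ is homogeneous of degree $0$, hence descends to a continuous function $\overline{\mathrm{Ent}}$ on the projectivized cone $\mathbb{P}\mathcal{C}$. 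Equivalently, one may encode this via McMullen's Teichm\"uller polynomial of $(M_b,\mathcal{C})$, whose specializations have the numbers $\lambda(\phi_a)$ as their largest roots.

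The construction of the sequence is the heart of the argument, and follows \cite[Proposition~3.36]{HK06} together with \cite[Section~4.1]{KT11}. I would exhibit an explicit family of primitive integral classes $a_n\in\mathcal{C}$ with $[a_n]\to[\alpha_b]$ in $\mathbb{P}\mathcal{C}$, and then carry out the homological bookkeeping needed to identify the topological type of each $F_{a_n}$. Here is the main obstacle: one must prove that every $F_{a_n}$ is planar (genus $0$) and has exactly $n+1$ boundary components, controlling both the genus and the puncture count from the class $a_n$ and the link $\mathrm{br}(b)$, and in particular ruling out positive genus. Granting this, $\phi_{a_n}$ is a pseudo-Anosov map on a genus $0$ fiber, hence is realized by a pseudo-Anosov braid $z_n\in B_n$ with $M_{z_n}\simeq M_b$, which is assertion (2).

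Finally I would deduce (3) and (1) from the entropy function. By continuity of $\overline{\mathrm{Ent}}$ and $[a_n]\to[\alpha_b]$ we obtain $\mathrm{Ent}(z_n)=\overline{\mathrm{Ent}}([a_n])\to\overline{\mathrm{Ent}}([\alpha_b])=2\log(2+\sqrt3)$, which is (3). For the strict inequality (1) I would rely on the explicit dilatation computation of \cite{HK06}: the numbers $\lambda(z_n)$ are the largest roots of an explicit family of polynomials, and a direct calculation shows that the sequence $\{\mathrm{Ent}(z_n)\}$ is strictly monotone, so no term equals its limit $2\log(2+\sqrt3)$. (Strict concavity of $1/\log\lambda$ on $\mathcal{C}$, due to McMullen, gives a conceptual reason why $\overline{\mathrm{Ent}}$ is nonconstant near $[\alpha_b]$, but the explicit monotone family is the clean way to secure the inequality.)
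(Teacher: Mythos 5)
Your overall frame---work inside the single hyperbolic manifold $M_b$, $b=\sigma_1^2\sigma_2^{-1}$, use Thurston's fibered cone plus Fried's continuity of $\mathrm{ent}$ so that normalized entropy descends to the projectivized cone, and push integral classes $[a_n]\to[\alpha_b]$---is exactly the strategy behind this statement (the paper itself quotes it from \cite{HK06} and \cite{KT11} and then generalizes it by this very method in Theorems~\ref{thm_positivemain} and \ref{thm_nomalizedentropy}). But your proposal concedes the decisive step rather than proving it: you write ``one must prove that every $F_{a_n}$ is planar \dots Granting this,'' and everything after that is conditional. The planarity of the fibers is the actual content of the theorem; the continuity argument for $\mathrm{Ent}(z_n)\to 2\log(2+\sqrt3)$ is the routine part (McMullen's Theorem~10.2, as the paper notes, already gives sequences with $\mathrm{Ent}(\phi_n)\to\mathrm{Ent}(b)$ but says nothing about genus). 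Genus is not read off from the homology class alone without further input: one needs $-\chi(F_{a_n})=\|a_n\|$ together with an exact count of boundary circles on each boundary torus (via the boundary slopes, as in (\ref{equation_slope})), and your ``homological bookkeeping'' is never specified. The paper's route makes planarity automatic instead of computed: the disk twists $T_{D_A}^k$ about the axis disk and $T_D^p$ about the disk bounded by $\mathrm{cl}(b(i))$ produce re-embeddings of $M_b$ in $S^3$ under which the relevant classes in the subcone $C_{(b,i)}$ become the $F$-surface classes of explicit braided links $\mathrm{br}(b\Delta^{2k})$, $\mathrm{br}(b_p)$, $\mathrm{br}((\beta(k)\Delta^{2p})_1)$---manifestly genus-$0$ fibers---with Morton's theorem (Theorem~\ref{thm_Morton}) disposing of the ambiguity in the construction.

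A second, related gap: even granting planarity and the boundary count $n+1$, the inference ``hence is realized by a pseudo-Anosov braid $z_n\in B_n$ with $M_{z_n}\simeq M_b$'' is not automatic. A genus-$0$ fiber of an abstract fibration $M_b\to S^1$ is an $F$-surface of a braid only after one exhibits a distinguished boundary component playing the role of the axis: the monodromy must preserve a single boundary circle (here this does hold because $\gcd$ considerations make $\partial F_{a_n}$ a single circle on the axis torus, but you must say so), and one must re-embed the mapping torus as a braided-link exterior in $S^3$. The paper's disk-twist construction does both at once, whereas your outline leaves the braid realization implicit. Your treatment of assertion (1), deferring the strict inequality $\mathrm{Ent}(z_n)\ne 2\log(2+\sqrt3)$ to the explicit monotone dilatation polynomials of \cite{HK06}, is sound and matches the source (your concavity remark alone would not rule out an individual term hitting the limit, as you correctly sense). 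In summary: right architecture, same as the paper's, but the theorem's core---genus $0$ and braid realization---is assumed rather than established.
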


\noindent
Here $\simeq$ means they are homeomorphic to each other. 
The limit point $2 \log (2+ \sqrt{3})$ is equal to $\mathrm{Ent}(\sigma_1^2 \sigma_2^{-1})$. 
By the lower bound (\ref{equation_Penner}), 
Theorem~\ref{thm_motivation} implies that 
$$\log \delta(\mathrm{Mod}(\Sigma_{0,n})) \asymp \dfrac{1}{n}.$$ 
In particular, the hyperbolic fibered $3$-manifold 
$M_{\sigma_1^2 \sigma_2^{-1}}$ admits 
 an infinitely family of genus $0$ fibers of fibrations over $S^1$.

Let $z_n $ be a pseudo-Anosov braid with $d_n$ strands. 
We say that a sequence $\{z_n\}$ {\it has a small normalized entropy} 
if   $d_n \asymp n$ and 
there is a constant $P>0$ which does not depend on $n$ such that 
 $\mathrm{Ent}(z_n) \le P$. 
 By (\ref{equation_Penner}) 
 a sequence $\{z_n\}$ having a small normalized entropy  means 
 $\log (\lambda(z_n)) \asymp 1/n$.  
One of the aims in this paper is to give a construction of many sequences of pseudo-Anosov braids with small normalized entropies. 
The following result generalizes Thereom~\ref{thm_motivation}. 

\begin{theorem}
\label{thm_nomalizedentropy}
Suppose that $b$ is  a pseudo-Anosov braid whose  permutation has a fixed point. 
There is a sequence of  pseudo-Anosov braids $\{z_n\}$ with small normalized entropy 
such that 
$\mathrm{Ent}(z_n) \to \mathrm{Ent}(b)$ 
as $n \to \infty$ and 
$M_{z_n} \simeq M_b$ for $n \ge 1$. 
\end{theorem}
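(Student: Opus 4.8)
The plan is to realize every $z_n$ as the monodromy of a genus-$0$ fibration of the single manifold $M_b$, and to extract the convergence of the normalized entropies from the homogeneity of the dilatation on the fibered cone. First I would locate the fibration class $\alpha_b = [F_b] \in H^1(M_b;\mathbb{Z})$ and let $\mathcal{C} \subset H^1(M_b;\mathbb{R})$ be the fibered cone containing it in its interior (Thurston's fibered face theory \cite{Thurston98}). Since $b$ is pseudo-Anosov, $\phi_b$ is pseudo-Anosov and $M_b$, being its mapping torus, is hyperbolic; hence every primitive integral class in $\mathcal{C}$ is a fibered class whose monodromy is pseudo-Anosov. I would then invoke the two-dimensional subcone $\mathcal{C}_0 \subset \mathcal{C}$ with $\alpha_b \in \mathcal{C}_0$ constructed above, together with the constructive description of the monodromy $\phi_a \colon F_a \to F_a$ for primitive integral $a \in \mathcal{C}_0$: each such $F_a$ has genus $0$ and carries a distinguished boundary component playing the role of the braid axis, so that $\phi_a$ is realized as an honest braid $z_a$ with $M_{z_a} \simeq M_b$.

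The second ingredient is that the normalized entropy is a projective invariant on the cone. By Fried's theorem the function $a \mapsto \log \lambda(\phi_a)$ is continuous on $\mathcal{C}$ and homogeneous of degree $-1$, while the Thurston norm $\|\cdot\|$ is homogeneous of degree $1$ and equals $|\chi(F_a)|$ on fibered classes. Therefore the normalized entropy $\mathrm{Ent}(\phi_a) = \|a\|\,\log \lambda(\phi_a)$ is homogeneous of degree $0$, and so descends to a continuous function on the projectivization $\mathbb{P}\mathcal{C}_0$.

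Now I would build the sequence. Fixing $\gamma \in \mathcal{C}_0$ independent of $\alpha_b$ and taking primitive integral classes $a_n \in \mathcal{C}_0$ close to the ray through $n\,\alpha_b + \gamma$, one has $[a_n] \to [\alpha_b]$ in $\mathbb{P}\mathcal{C}_0$ while $\|a_n\| \asymp n$; such a sequence exists since $\mathcal{C}_0$ is an open $2$-dimensional rational cone. Writing $z_n := z_{a_n}$ and letting $d_n$ be its number of strands, genus $0$ gives $|\chi(F_{a_n})| = d_n - 1 = \|a_n\|$, whence $d_n \asymp n$. Continuity of $\mathrm{Ent}$ on $\mathbb{P}\mathcal{C}_0$ yields $\mathrm{Ent}(z_n) \to \mathrm{Ent}(\alpha_b) = \mathrm{Ent}(b)$, and boundedness on a compact projective neighborhood of $[\alpha_b]$ supplies a uniform constant $P$ with $\mathrm{Ent}(z_n) \le P$; thus $\{z_n\}$ has small normalized entropy. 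Each $z_n$ is pseudo-Anosov and $M_{z_n} \simeq M_b$ by the first paragraph.

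The main obstacle is that very first step: verifying that the monodromy $\phi_{a_n}$ really is a \emph{braid}, i.e.\ that $F_{a_n}$ has a consistently identified boundary component serving as the braid axis, so that $M_{z_n}$ is genuinely the exterior of a braided link homeomorphic to $M_b$ rather than merely a genus-$0$ mapping class on an abstract holed sphere. This is exactly where the fixed-point hypothesis on the permutation of $b$ enters, since it produces the boundary/section structure needed to cut $M_b$ along the axis compatibly for all $a \in \mathcal{C}_0$; it is handled by the constructive description of $\phi_a$ established earlier. The remaining points — the linear growth $d_n \asymp n$ and the uniform bound $P$ — are then routine consequences of the homogeneity of $\|\cdot\|$ and the continuity of $\log \lambda$.
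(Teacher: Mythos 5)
Your proposal is correct and follows essentially the same route as the paper: both realize the braids $z_n$ via the disk-twist construction behind Theorem~\ref{thm_main} (equivalently Theorem~\ref{thm_positivemain}) as monodromies of primitive integral classes in the $2$-dimensional subcone whose projective classes converge to $[F_b]$, and both obtain the uniform bound and the convergence $\mathrm{Ent}(z_n) \to \mathrm{Ent}(b)$ from Fried's continuity theorem, the paper simply taking the explicit classes $(p+1,1)$, i.e.\ $z_p = (\beta(k)\Delta^{2p})_1$ with $\beta(k) = b\Delta^{2k}$ positive. One harmless imprecision: $[F_b]$ lies on the boundary of the subcone $\mathcal{C}_0$, not in $\mathcal{C}_0$ itself, but since $[F_b]$ is interior to the full fibered cone $\mathcal{C}$ and $\mathrm{Ent}$ is continuous there, your limiting argument goes through unchanged with $\mathcal{C}$ in place of $\mathcal{C}_0$.
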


The  proof of Theorem~\ref{thm_nomalizedentropy} is constructive. 
In fact one can describe  braids $z_n$  explicitly. 
For a more general result see Theorems~\ref{thm_compact}, \ref{thm_sequence}. 
Let $\mathcal{C} \subset H_2(M_b, \partial M_b)$ be the fibered cone containing  $[F_b]$. 
A theorem by Thurston~\cite{Thurston86}  states that 
for each primitive integral class $a \in \mathcal{C}$ 
there is a connected fiber $F_a$ 
with the pseudo-Anosov monodromy $\phi_a: F_a \rightarrow F_a$ of a fibration on the hyperbolic $3$-manifold $M_b$ over $S^1$. 
The following theorem states a structure of $\mathcal{C}$.

\begin{theorem}
\label{thm_main}
Suppose that $b$ is  a pseudo-Anosov braid whose  permutation has a fixed point. 
Then there are a $2$-dimensional  subcone $\mathcal{C}_0 \subset \mathcal{C}$ 
and an  integer $u  \ge 1$ 
with the following properties. 
\begin{enumerate}
\item[(1)] 
The fiber $F_a$ for each primitive integral class $a \in \mathcal{C}_0$ has genus $0$. 

\item[(2)] 
The monodromy 
$ \phi_a: F_a \rightarrow F_a$ for each primitive integral class $a \in \mathcal{C}_0$   is conjugate to 
$$(\omega_1 \psi)  \cdots (\omega_{u-1} \psi) (\omega_u \psi) \psi^{m-1}: F_a \rightarrow F_a,$$  
where $m \ge 1$ depends on the class $a$,  
$\psi$ is periodic and 
each $\omega_j$ is reducible. 
Moreover there are homeomorphisms $\widehat{\omega}_j: S_0 \rightarrow S_0$ on a surface $S_0$  for $j = 1, \ldots,  u$ 
determined by $b$ 
and an embedding $h: S_0 \hookrightarrow F_a$
such that 
$h(S_0)$ is the support of each $w_j$ 
and 
$$w_j|_{h(S_0)} =  h \circ \widehat{\omega}_j \circ h^{-1}.$$
\end{enumerate}
\end{theorem}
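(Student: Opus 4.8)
The plan is to carry out the whole argument inside $H^1(M_b;\mathbb{R}) \cong H_2(M_b,\partial M_b;\mathbb{R})$, where the fibered cone $\mathcal{C}$ and the class $[F_b]$ live. First I would fix coordinates adapted to the cusps of $M_b$: its boundary consists of one torus $T_A$ coming from the braid axis $A$ together with one torus for each component of $\mathrm{cl}(b)$, the number of the latter being the number of cycles of the permutation $\pi(b)$. Since $\pi(b)$ has a fixed point, one of these components is a single strand encircling the axis exactly once; write $T_{\ast}$ for its torus. I would then record $[F_b]$ together with its induced slopes $\partial F_b \cap T$ on each boundary torus $T$, making the Thurston norm and the fibered face through $[F_b]$ explicit.

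To build the subcone I would take $\mathcal{C}_0$ to be the two-dimensional cone spanned by $[F_b]$ and a second integral class $a_0 \in \mathcal{C}$ obtained by pushing $[F_b]$ in the homological direction dual to the two distinguished cusps $T_A$ and $T_{\ast}$; because $\mathcal{C}$ is an open convex cone containing both generators, convexity yields $\mathcal{C}_0 \subset \mathcal{C}$. For part~(1) I would compute $\chi(F_a)$ for a primitive integral $a \in \mathcal{C}_0$ from the Thurston norm, which restricts to a linear function on $\mathcal{C}$, so that $\chi(F_a) = -\|a\|_T$; and I would count the circles of $\partial F_a$ by reading off the restriction $a|_T \in H_1(T)$ on each boundary torus, its divisibility giving the number of parallel boundary circles there. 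Genus~$0$ is then the single identity $2 - \chi(F_a) = \#\,\partial F_a$, and the once-around fixed strand at $T_{\ast}$ is precisely what forces this balance to persist along the whole cone $\mathcal{C}_0$.

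For part~(2) I would pass to the suspension flow of the pseudo-Anosov monodromy on $M_b$ and realize each $F_a$ as a cross-section, so that $\phi_a$ becomes its first-return map. Cutting $M_b$ along $F_b$ exhibits it as $F_b \times [0,1]/\phi_b$, and the new fiber $F_a$ meets the copies of $F_b$ in a controlled pattern; tracing the return map across these pieces organizes $\phi_a$ into an alternating product of a periodic factor $\psi$ --- the rotation coming from the cyclic symmetry about the distinguished cusps --- and reducible factors $\omega_j$ produced by the local braiding of $b$, the integer $u$ being intrinsic to $b$ while the exponent $m$ counts how often $F_a$ winds in the flow direction. The subsurface $S_0$ and the embedding $h\colon S_0 \hookrightarrow F_a$ would be identified as the support of each $\omega_j$, and the conjugacy $\omega_j|_{h(S_0)} = h \circ \widehat{\omega}_j \circ h^{-1}$ would follow by matching the first-return map on that support with the model maps $\widehat{\omega}_j$ read off directly from $b$.

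The main obstacle is part~(2). Whereas the cone and the Euler-characteristic bookkeeping of part~(1) are controlled by the linear Thurston norm, the explicit factorization of the monodromy demands a hands-on first-return analysis reconciling the abstract homological data with the concrete braid combinatorics. Pinning down the support $S_0$ and the embedding $h$, verifying that the $\widehat{\omega}_j$ depend only on $b$ and not on $a$, and checking that the interleaving produces exactly $(\omega_1\psi)\cdots(\omega_u\psi)\psi^{m-1}$ with the correct exponents is where the substantive work concentrates.
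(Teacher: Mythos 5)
There is a genuine gap, and it starts with the choice of the second generator of the cone. In the paper, $\mathcal{C}_0$ is (the pullback of) the cone $C_{(b,i)}$ spanned by $[F_b]$ and the class of the $E$-surface $E_{(b,i)}$, a planar surface carved out of the disk bounded by the longitude of the closed fixed strand. For that surface to behave well one needs every intersection of the remaining strands with this disk to have the same sign, i.e.\ the braid must be $i$-increasing; a general pseudo-Anosov braid with a fixed point is not, so the paper first replaces $b$ by $b\Delta^{2k}$ (positive for large $k$, hence $i$-increasing by Lemma~\ref{lem_positive}, with the same mapping class) and pulls the cone back to $H_2(M_b,\partial M_b)$ along the disk-twist homeomorphism $f_k$. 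This reduction is entirely absent from your proposal, and your candidate class ``dual to the two distinguished cusps'' is never actually defined. Your containment argument is also off target: convexity would suffice only if the second generator lay in $\mathcal{C}$, but by Remark~\ref{rem_e} the class $[E_{(b,i)}]$ may lie on $\partial\mathcal{C}$; the paper proves $C_{(b,i)}\subset\mathcal{C}$ by manufacturing an explicit sequence of fibered classes $(1,p)$ --- the braids $b_p$ obtained by disk twisting along the fixed strand's disk --- and exploiting linearity of the Thurston norm on fibered cones (proof of Theorem~\ref{thm_positivemain}(1)). Likewise your genus-$0$ bookkeeping is not a proof: for a generic integral $a_0\in\mathcal{C}$ the $2$-cone through $[F_b]$ and $a_0$ contains fibers of arbitrarily large genus, so the identity $2-\chi(F_a)=\#\,\partial F_a$ holds only for the correct cone. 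The paper obtains it by realizing each primitive $(x,y)$ as the $F$-surface (a punctured-disk fiber) of an explicit braid $b[p_1,\ldots,p_\ell]$ built from the continued-fraction expansion of $y/x$ via alternating disk twists (Lemmas~\ref{lem_iso_1} and \ref{lem_iso_2}).

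For part~(2) you concede the factorization is the hard part, but you supply no mechanism, and the paper's mechanism is braid-combinatorial rather than a first-return analysis. Lemma~\ref{lem_normalform} conjugates an $i$-increasing braid into the standard form $(w_1\sigma_{n-1}^2)\cdots(w_u\sigma_{n-1}^2)$; the identity $\sigma_{n-1}^2=(\sigma_1\cdots\sigma_{n-2})^{-1}\rho$ rewrites this as $(\nu_1\rho)\cdots(\nu_u\rho)$ with $\rho$ periodic and each $\nu_j$ supported off the last strand; and an induction on the length of the continued-fraction expansion shows every braid $b[p_1,\ldots,p_\ell]$ realizing a class of the subcone retains the form $(\nu_1\rho)\cdots(\nu_u\rho)\rho^{m-1}$ with the \emph{same} words $\nu_j$ inside a larger braid group. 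That persistence is precisely what yields the fixed model surface $S_0$, the homeomorphisms $\widehat{\omega}_j$ depending only on $b$, and the embedding $h$ with uniformly bounded support, independent of $a$. Your suspension-flow sketch asserts that tracing the return map ``organizes $\phi_a$ into an alternating product'' of a periodic map and reducible maps, but this restates the conclusion: nothing in the proposal shows such a decomposition exists, that $u$ is intrinsic to $b$, or that the reducible pieces are supported on a subsurface independent of $a$. As written, the core of the theorem remains unproven.
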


Theorem~\ref{thm_main}  gives a constructive description of  $\phi_a$. 
Also it states that 
each $w_j: F_a \rightarrow F_a$ is reducible supported on a uniformly bounded subsurface $h(S_0) \subset F_a$. 
It turns out from the proof that 
the type of the periodic homeomorphism $\psi: F_a \rightarrow F_a$  does not depend on $a \in C_0$
(Remark~\ref{rem_periodic}), 
see Figure~\ref{fig_flm}(1). 
Theorem~\ref{thm_main} reminds us of the symmetry conjecture in \cite{Margalit18} by 
Farb-Leininger-Margalit.

Clearly the permutation of each pure braid has a fixed point. 
For any pseudo-Anosov braid $b$,  a suitable power $b^k$ becomes a pure braid  
and one can apply Theorems~\ref{thm_nomalizedentropy}, \ref{thm_main} for $b^k$.

We have a remark about Theorem~\ref{thm_nomalizedentropy}. 
Theorem~10.2 in \cite{McMullen00} by McMullen also tells us 
the existence of a sequence $(F_n, \phi_n)$ of fibers and monodromies in $\mathcal{C}$ 
such that 
$\mathrm{Ent}(\phi_n) \to \mathrm{Ent}(b)$ as $n \to \infty$ and $|\chi(F_n)| \asymp n$. 
However one can not appeal his theorem for the genera of fibers $F_n$. 
Theorem~\ref{thm_nomalizedentropy} says that $F_n$ has genus $0$ in fact.

\begin{center}
\begin{figure}
\includegraphics[width=3.5in]{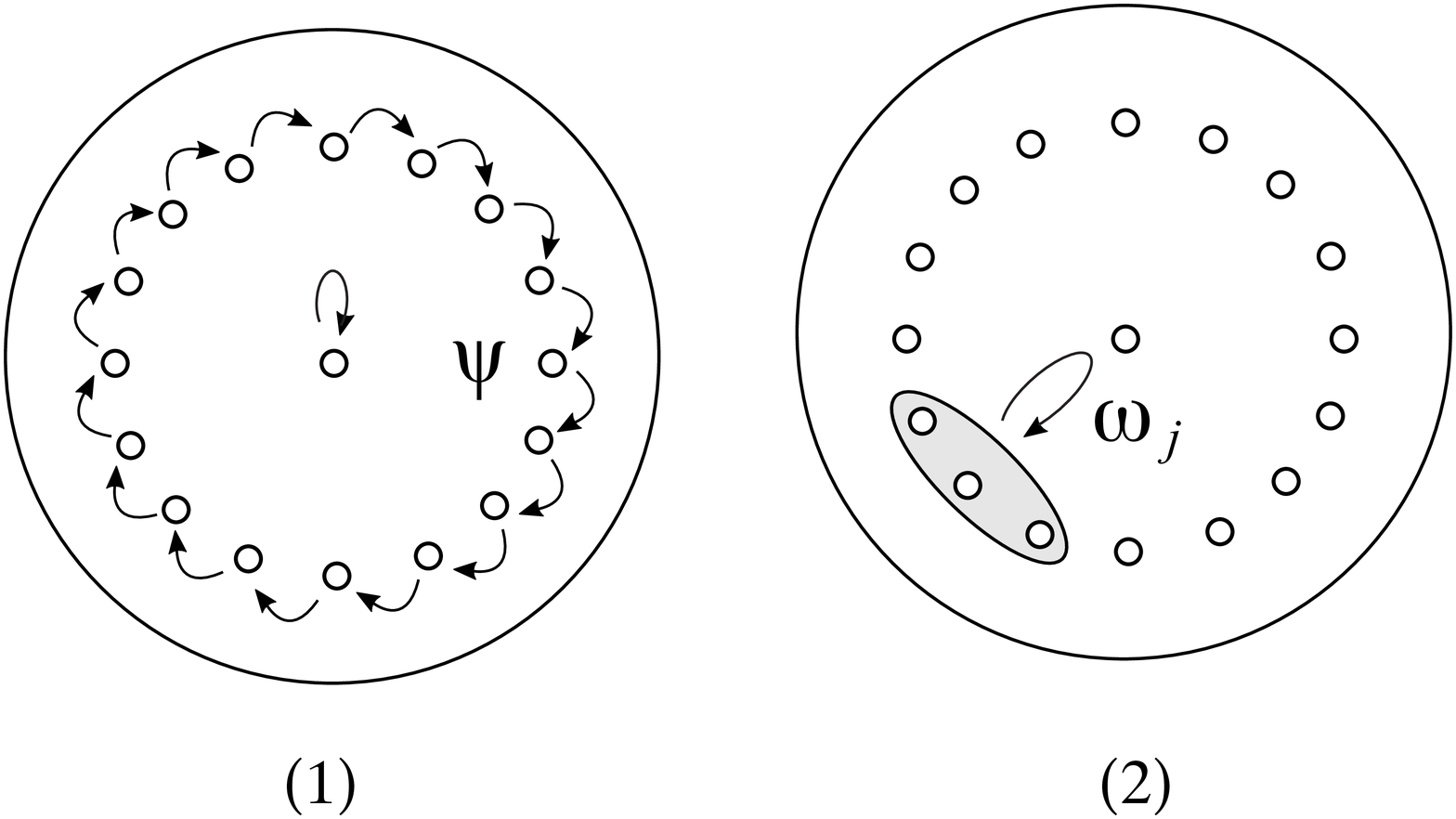}
\caption{
Dynamics of $\psi$ and $\omega_j$ in Theorem~\ref{thm_main}. 
(1) Periodic $ \psi: F_a \rightarrow F_a$. 
(2) Reducible $\omega_j: F_a \rightarrow F_a$. 
Subsurface 
$h(S_0)$ is shaded.}
\label{fig_flm}
\end{figure}
\end{center}

\begin{center}
\begin{figure}
\includegraphics[width=2.5in]{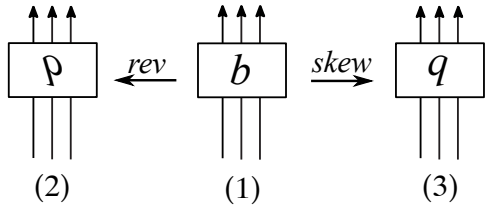}
\caption{
Illustration of braids 
(1) $b$, 
(2) $rev(b)$, 
(3) $skew(b)$.}
\label{fig_palyobi}
\end{figure}
\end{center}


As an application 
we will determine asymptotic behaviors of the minimal dilatations 
of a subset of $B_n$ consisting of braids with a symmetry.   
A braid $b \in B_n$ is {\it palindromic} if $rev(b)= b$, 
where 
$rev: B_n \rightarrow B_n$ is a map such that 
if $w$ is a word of letters $\sigma_j^{\pm 1}$ representing $b$, 
then $rev(b)$  is 
the braid obtained from $b$ reversing the order of letters in $w$. 
A braid $b \in B_n$ is {\it skew-palindromic} if  $skew(b)= b$, 
where $skew(b)= \Delta rev(b) \Delta^{-1}$ and 
$\Delta$ is a half twist (Section~\ref{subsection_braidgroups}). 
See Figure~\ref{fig_palyobi}. 
We will prove that 
 dilatations of palindromic braids have the following  lower bound.

\begin{theorem}
\label{thm_pal}
If  $b \in B_n$ is palindromic and pseudo-Anosov for $n \ge 3$, 
then 
$$\lambda(b) \ge \sqrt{2+ \sqrt{5}}.$$
\end{theorem}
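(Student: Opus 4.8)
The plan is to convert the algebraic palindromic symmetry into a geometric reversibility of the monodromy, and then extract the numerical bound from the resulting symmetry of the invariant foliations. First I would identify the anti-automorphism $rev$ geometrically. Let $\iota$ be the reflection of the $n$-marked disk across the line containing all of the marked points. Since a reflection reverses the handedness of a half-twist, $\iota H_i \iota^{-1} = H_i^{-1}$ for each elementary half-twist $H_i$, so the anti-automorphism $g \mapsto \iota g^{-1}\iota^{-1}$ fixes every generator. As $rev$ is the unique anti-automorphism of $B_n \cong \mathrm{Mod}(\Sigma_{0,n+1})$ fixing the $\sigma_i$, I obtain $rev(g) = \iota g^{-1}\iota^{-1}$. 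Hence for palindromic $b$ one has $f_b = rev(f_b) = \iota f_b^{-1}\iota$, i.e.
\begin{equation} \iota f_b \iota = f_b^{-1}, \qquad \iota^2 = \mathrm{id}, \end{equation}
so $f_b$ is reversible by the orientation-reversing involution $\iota$; equivalently $f_b = \iota \tau$ is a product of the two orientation-reversing involutions $\iota$ and $\tau := \iota f_b$ (note $\iota f_b \iota f_b = f_b^{-1} f_b = \mathrm{id}$).

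Next I would read off the consequence for the invariant foliations. Because $\iota$ conjugates $f_b$ to $f_b^{-1}$, and conjugation carries the unstable/stable foliations of $f_b$ to those of $f_b^{-1}$, the involution $\iota$ interchanges $\mathcal{F}^u$ and $\mathcal{F}^s$ while preserving the common dilatation $\lambda = \lambda(b)$. In the flat cone metric of the quadratic differential defined by $(\mathcal{F}^u,\mathcal{F}^s)$ the map $\iota$ is thus an orientation-reversing isometry that exchanges the horizontal and vertical directions, while $f_b$ stretches horizontally by $\lambda$ and contracts vertically by $\lambda$. This is exactly the kind of symmetry incompatible with a dilatation arbitrarily close to $1$: the reflection forces the two foliations to be balanced, so the stretch factor cannot be small, which is the qualitative reason palindromic braids behave oppositely to the skew-palindromic ones.

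To turn this into the explicit constant I would linearize. Passing to the homology of a cover adapted to $\iota$ — the double cover branched over the marked points, or the orientation double cover of $\mathcal{F}^u$ on which $f_b$ (or $f_b^2$) lifts with an orientable invariant foliation — the monodromy acts by an integral matrix $A$, and $\iota$ induces an anti-symplectic involution $I$ with $I A I = A^{-1}$ and $I^2 = \mathrm{Id}$. The general inequality that the dilatation dominates the homological spectral radius then bounds $\lambda$ below in terms of the eigenvalues of $A$. The key point is that $2+\sqrt{5}$, the largest root of $x^2 - 4x - 1$, is \emph{not} a symplectic (reciprocal) eigenvalue: it arises from a determinant $-1$ block, which is precisely what the orientation-reversing involution $I$ supplies. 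I expect the symmetry to force the binding invariant integral block to have determinant $-1$ and even trace, whose minimal hyperbolic value is $4$, yielding $\lambda^2 \ge 2 + \sqrt{5}$, i.e. $\lambda \ge \sqrt{2+\sqrt 5}$.

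The hard part will be this last step: making the numerical constant emerge rigorously rather than heuristically, and two things must be controlled. First, a pseudo-Anosov braid can be homologically small, so I must use the reversibility relation $IAI = A^{-1}$ to guarantee that the relevant ($\iota$-equivariant) action is genuinely hyperbolic and cannot degenerate to spectral radius $1$. Second, I must pin down the exact arithmetic — why the binding block has determinant $-1$ and \emph{even} trace, so that the minimal polynomial of $\lambda$ is $x^4 - 4x^2 - 1$ rather than something smaller — which is where the precise interaction between the symplectic form, the anti-symplectic involution $I$, and the deck symmetry of the cover enters. Finally I would confirm sharpness by exhibiting an explicit palindromic braid (verifying $rev(b)=b$ from its word and computing its train-track transition matrix or Burau eigenvalue) whose dilatation equals $\sqrt{2+\sqrt 5}$, both validating the constant and locating where equality is attained.
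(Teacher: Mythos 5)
Your reduction of the palindromic condition to reversibility is correct: $rev(g)=\iota g^{-1}\iota^{-1}$ for the reflection $\iota$ across the line of punctures, so $f_b=\iota f_b^{-1}\iota$ and $\iota$ swaps $\mathcal{F}^u$ and $\mathcal{F}^s$. The genuine gap is in the step that is supposed to produce the constant. The only general inequality available there is that the dilatation dominates the spectral radius of the induced action on homology (of the surface or of any finite cover to which the map lifts), and this is vacuous whenever that spectral radius equals $1$. The reversibility relation $IAI=A^{-1}$ does nothing to rule this out: it is satisfied by $A=\mathrm{Id}$, and the homological action on the double cover branched over the punctures (the Burau representation at $t=-1$, equivalently the symplectic representation of the hyperelliptic mapping class group) has a large kernel, the hyperelliptic Torelli group, which contains pseudo-Anosov elements. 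So no uniform lower bound can come out of this linearization alone, as you half-acknowledge. The supporting arithmetic claims (that the ``binding block'' must have determinant $-1$ and even trace, with minimal hyperbolic trace $4$ forcing $x^2-4x-1$) are asserted without any mechanism, and the final plan to certify sharpness is also misplaced: the paper does not claim the bound $\sqrt{2+\sqrt{5}}$ is attained by a palindromic braid.

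The paper's proof bypasses all of this geometry and is purely combinatorial. Since each transposition $\pi_j$ is an involution, $\pi_{rev(b)}=\pi_{i_k}\cdots\pi_{i_1}=\pi_b^{-1}$; palindromicity gives $\pi_b=\pi_b^{-1}$, hence $\pi_b^2=\mathrm{id}$, i.e.\ $b^2$ is a pure braid. The constant is then imported wholesale from Song's theorem \cite{Song05}, which gives the uniform bound $\lambda(b')\ge 2+\sqrt{5}$ for every pseudo-Anosov pure braid on any number of strands; applied to $b'=b^2$ this yields $\lambda(b)^2=\lambda(b^2)\ge 2+\sqrt{5}$. So $2+\sqrt{5}$ is not an anti-symplectic eigenvalue phenomenon at all --- it is Song's pure-braid bound --- and the only trace of the palindromic symmetry used in the proof is the order-two condition on the permutation. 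To salvage your route you would need a substitute for Song's theorem, namely a uniform-in-$n$ dilatation bound for pseudo-Anosov maps reversible by an orientation-reversing involution, and the homological argument you propose cannot supply it.
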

In contrast with palindromic braids 
we have the following result. 

\begin{theorem}
\label{thm_skp}
Let $PA_n$ be the set of skew-palindromic elements in $B_n$. 
We have 
$$\log \delta(PA_n) \asymp \dfrac{1}{n}. $$ 
\end{theorem}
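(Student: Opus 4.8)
The statement packages a lower and an upper bound, and I would treat them separately. The lower bound is immediate from Penner's inequality. A skew-palindromic pseudo-Anosov $b\in B_n$ is in particular a pseudo-Anosov element of $\mathrm{Mod}(\Sigma_{0,n+1})$, so applying (\ref{equation_Penner}) with $g=0$ and with $n+1$ punctures gives
\[
\log\lambda(b)\ \ge\ \frac{\log 2}{12\cdot 0-12+4(n+1)}\ =\ \frac{\log 2}{4n-8}.
\]
Taking the minimum over $PA_n$ yields $\log\delta(PA_n)\ge \tfrac{\log 2}{4n-8}$, so $\log\delta(PA_n)$ is bounded below by a universal constant times $1/n$. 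It therefore remains to produce, for all sufficiently large $n$, a skew-palindromic pseudo-Anosov braid in $B_n$ with uniformly bounded normalized entropy; by (\ref{equation_Penner}) this is precisely the bound $\log\delta(PA_n)\lesssim 1/n$.

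For the upper bound I would exploit the symmetry built into Theorems~\ref{thm_nomalizedentropy} and \ref{thm_main}. First I fix a skew-palindromic pseudo-Anosov seed braid $b$ whose permutation has a fixed point; such seeds exist and can be displayed explicitly by writing a word of the palindromic shape $v\,\eta\,skew(v)$ with a skew-invariant central syllable $\eta$ (for instance one checks that a braid such as $\sigma_1^2\sigma_2^{-1}\sigma_3^2\in B_4$ is skew-palindromic, has permutation $(2\,3)$ fixing $1$ and $4$, and is pseudo-Anosov). The relation $skew(b)=b$ means that the anti-automorphism $skew=\Delta\,rev(\,\cdot\,)\,\Delta^{-1}$ is realized by a symmetry of the pair $(S^3,\mathrm{br}(b))$ fixing the braid axis $A$, hence by an involution $\iota$ of the exterior $M_b$ carrying the fibration of $M_b$ over $S^1$ to itself. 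Thus $\iota$ acts on $H_2(M_b,\partial M_b)$, preserves the fibered cone $\mathcal C$ and the genus-$0$ subcone $\mathcal C_0$ of Theorem~\ref{thm_main} (up to the overall orientation of the $S^1$-factor), and stabilizes the line spanned by $[F_b]$.

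The plan is then to run the constructive description of Theorem~\ref{thm_main} along the $\iota$-symmetric locus of $\mathcal C_0$. For a primitive integral class $a\in\mathcal C_0$ lying in this locus, the fiber $F_a$ can be chosen $\iota$-invariant, and the involution identifies the monodromy braid $z_a$ representing $\phi_a$ with its own skew. Because Theorem~\ref{thm_main}(2) presents $\phi_a$ through the explicit factorization $(\omega_1\psi)\cdots(\omega_{u-1}\psi)(\omega_u\psi)\psi^{m-1}$, in which $\iota$ permutes the reducible pieces $\omega_j$ supported on the embedded copies $h(S_0)$ and conjugates the periodic piece $\psi$, one can read off a manifestly skew-palindromic word representing $z_a$. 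By Theorem~\ref{thm_nomalizedentropy} these braids have genus-$0$ fibers, a number of strands $\asymp n$, and normalized entropy bounded by a constant independent of $a$; hence $\log\lambda(z_a)\asymp 1/(\text{number of strands})$, which supplies the upper bound $\log\delta(PA_n)\lesssim 1/n$.

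The hard part is the last paragraph, in two respects. First, the geometric involution only shows a priori that $skew(z_a)$ is conjugate to $z_a$; promoting this to the on-the-nose equality $skew(z_a)=z_a$ needed for membership in $PA_n$ requires choosing the fiber, its marking, and the word for $\phi_a$ so that $\iota$ acts as an honest palindromic symmetry of the braid word, and this is where a careful, $\iota$-equivariant tracking of the factorization in Theorem~\ref{thm_main} is essential. Second, to obtain the comparability $\asymp$ for \emph{every} $n$ rather than along a sparse set of strand numbers, I would need to check that the $\iota$-symmetric primitive integral classes in $\mathcal C_0$ realize all large strand numbers; this is an arithmetic question about how the puncture count of $F_a$ varies over the symmetric subcone, and closing any residual gaps may call for a second symmetric seed or for a skew-palindromicity-preserving stabilization $z_n\mapsto z_{n+1}$.
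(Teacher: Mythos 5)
Your lower bound is correct and is exactly what the paper uses: Penner's inequality (\ref{equation_Penner}) gives $\log\lambda(b)\gtrsim 1/n$ for every pseudo-Anosov $b\in B_n$, so only the upper bound carries content. There, however, your outline has a genuine gap, and it sits precisely where you flag it — but the problem is worse than you suggest. Even if the involution $\iota$ of $M_b$ were constructed and the factorization of Theorem~\ref{thm_main} tracked equivariantly, the geometric symmetry can only ever show that the monodromy braid $z_a$ is \emph{conjugate} to $skew(z_a)$; and even this is delicate, since the rotation of the cylinder realizing $skew$ reverses the orientations of the strands and of the axis, so Morton's theorem (Theorem~\ref{thm_Morton}) does not apply as stated. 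Membership in $PA_n$ requires a braid literally fixed by $skew$, dilatation is a conjugacy invariant, and there is no general mechanism for upgrading ``conjugate to its skew'' to an honest skew-palindromic representative of the same conjugacy class. Moreover the word $(\omega_1\psi)\cdots(\omega_{u-1}\psi)(\omega_u\psi)\psi^{m-1}$ produced by Theorem~\ref{thm_main} has no visible skew symmetry: $\psi$ corresponds to $\rho=\sigma_1\cdots\sigma_{d-2}\sigma_{d-1}^2$, which is not skew-invariant. So the ``equivariant tracking'' you defer is not a refinement of a correct argument; it is the entire missing proof. Your second worry (realizing all strand numbers, not just a sparse set) is also a real issue that your outline does not close.

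The paper's route avoids symmetry considerations entirely by exhibiting families that are skew-palindromic by inspection of the word. It takes the seed $\xi=\sigma_1\sigma_2^2\sigma_3^2\sigma_4\in B_5$ ($3$-increasing, $u(\xi,3)=2$), forms the disk-twist family $\xi_p\in B_{5+2p}$, and deletes the fixed strand to obtain $\xi_p^{\bullet}=(\sigma_1\cdots\sigma_{1+2p})(\sigma_3\cdots\sigma_{3+2p})\in B_{4+2p}$, which is manifestly skew-palindromic; a second seed $\eta\in B_8$ yields $\eta_p^{\bullet}=(\sigma_1\cdots\sigma_{4+2p})(\sigma_3\cdots\sigma_{6+2p})\in B_{7+2p}$ and handles the other parity of $n$ — this is how the arithmetic issue you raise is resolved. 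Small normalized entropy of $\{\xi_p\}$ comes from Theorem~\ref{thm_sequence}(1) once $[E_{(\xi,3)}]$ is shown to be a fibered class, which is done via Lemma~\ref{lem_ef} and a conjugacy check against the $3$-braid $\sigma_1^{-1}\sigma_2^2\sigma_1^{-1}\sigma_2^2$. Then comes a step with no counterpart in your outline: since skew-palindromicity only appears \emph{after} deleting the distinguished strand (the $\xi_p$ themselves are not skew-palindromic), one must verify that the strand-deleted braid remains pseudo-Anosov with the same dilatation, which the paper does by computing that the stable foliation $\mathcal{F}_{\xi_p}$ is $(p+1)$-pronged (in particular not $1$-pronged) at $\partial_{(\xi_p,3+2p)}F_{\xi_p}$, using Lemma~\ref{lem_foliation}, the boundary-slope formula (\ref{equation_slope}), and the closed-orbit classes $[c_A]$, $[c_3]$. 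If you want to salvage your approach, the realistic fix is to abandon the equivariance scheme and, as the paper does, make the symmetry visible at the level of explicit braid words along a chosen family of classes, then certify pseudo-Anosovity by the prong computation.
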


\begin{center}
\begin{figure}
\includegraphics[width=5.2in]{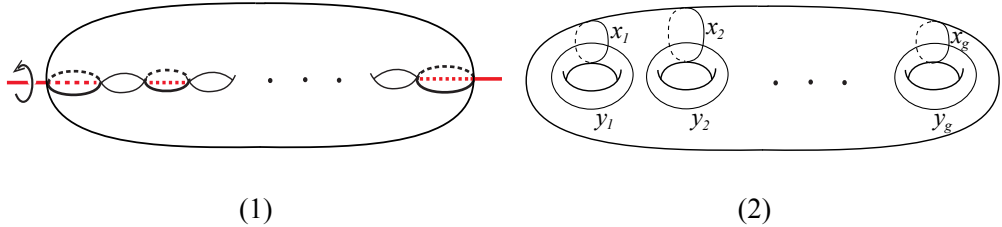} 
\caption{(1) $\mathcal{I}: \Sigma_g \rightarrow \Sigma_g$. 
(2) A basis $\{ x_1, y_1, \ldots, x_g, y_g\}$ of $H_1 (\Sigma_g; \mathbb{Z}_2)$.}
\label{fig_basis}
\end{figure}
\end{center}

The {\it hyperelliptic mapping class group} $\mathcal{H}(\Sigma_g)$ 
is the subgroup of $\mathrm{Mod}(\Sigma_g)$ consisting of elements with representative homeomorphisms
that commute with some fixed hyperelliptic involution $\mathcal{I}: \Sigma_g \rightarrow \Sigma_g$ 
as  in Figure~\ref{fig_basis}(1). 
It is shown in \cite{HK06} that $\log \delta(\mathcal{H}(\Sigma_g)) \asymp 1/g$. 
See also \cite{FLM08,Hironaka14,HK17} for other subgroups of $\mathrm{Mod}(\Sigma_g)$. 
As an application we will 
determine the asymptotic behavior of the minimal dilatations of the odd/even spin mapping class groups of genus $g$. 
To define these subgroups  
let $(\cdot, \cdot)_2$ be the mod-$2$ intersection form on $H_1 (\Sigma_g; \mathbb{Z}_2)$.  
A map $\mathfrak{q}: H_1(\Sigma_g; \mathbb{Z}_2) \to \mathbb{Z}_2$ is a {\em quadratic form\/} if 
$\mathfrak{q}(v+w) = \mathfrak{q}(v) + \mathfrak{q}(w) + (v,w)_2$  for  
$v,w \in H_1(\Sigma_g; \mathbb{Z}_2)$. 
For a quadratic form $\mathfrak{q}$, 
the {\em spin mapping class group\/} $\mathrm{Mod}_g[\mathfrak{q}]$ 
is the subgroup of $\mathrm{Mod}(\Sigma_g)$ consisting of elements $\phi $ 
such that $\mathfrak{q} \circ \phi_* = \mathfrak{q}$. 
To define the two quadratic forms $\mathfrak{q}_0$ and $\mathfrak{q}_1$ 
we choose a basis $\{x_1, y_1, \ldots, x_g, y_g \}$ of 
$H_1 (\Sigma_g; \mathbb{Z}_2)$ as  in Figure~\ref{fig_basis}(2). 
Let $\mathfrak{q}_0$ be the quadratic form such that 
$\mathfrak{q}_0(x_i)=\mathfrak{q}_0(y_i)=0$ for $1 \le i \le g$. 
 Let $\mathfrak{q}_1$ be the quadratic form such that 
$\mathfrak{q}_1(x_1)=\mathfrak{q}_1(y_1)=1$  and 
$\mathfrak{q}_1(x_i)=\mathfrak{q}_1(y_i)=0$ for  $ 2 \le i \le g$. 
A result of Dye~\cite{Dye78} tells us that  
$\mathrm{Mod}_g[\mathfrak{q}]$ for any $\mathfrak{q}$ is conjugate to either 
$\mathrm{Mod}_g[\mathfrak{q}_0]$ or $\mathrm{Mod}_g[\mathfrak{q}_1]$ 
in $\mathrm{Mod}(\Sigma_g)$. 
We call $\mathrm{Mod}_g[\mathfrak{q}_0]$ and $\mathrm{Mod}_g[\mathfrak{q}_1]$ 
the {\it even spin} and  {\it odd spin mapping class group} respectively. 
It is known that $\mathrm{Mod}_g[\mathfrak{q}_1]$ 
attains the minimum index for a proper subgroup of $\mathrm{Mod}(\Sigma_g)$ 
and $\mathrm{Mod}_g[\mathfrak{q}_0]$ attains the secondary minimum, 
see Berrick-Gebhardt-Paris~\cite{BGP14}.

\begin{theorem}
\label{thm_spin}
We have 
\begin{enumerate}
\item[(1)] 
$\log \delta (\mathrm{Mod}_g[\mathfrak{q}_1] \cap \mathcal{H}(\Sigma_g)) \asymp \dfrac{1}{g}$ and 
\smallskip

\item[(2)] 
$\log \delta (\mathrm{Mod}_g[\mathfrak{q}_0] \cap \mathcal{H}(\Sigma_g)) \asymp \dfrac{1}{g}$. 
\end{enumerate}
In particular 
$\log \delta (\mathrm{Mod}_g[\mathfrak{q}]) \asymp 1/g$ for each quadratic form $\mathfrak{q}$. 
\end{theorem}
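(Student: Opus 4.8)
The plan is to prove the lower and upper bounds separately and to deduce the final ``in particular'' clause from parts (1) and (2). For the reduction, note that $\mathrm{Mod}_g[\mathfrak{q}_i]\cap\mathcal{H}(\Sigma_g)\subset\mathrm{Mod}_g[\mathfrak{q}_i]\subset\mathrm{Mod}(\Sigma_g)$, so that
$$\log\delta(\mathrm{Mod}(\Sigma_g))\le\log\delta(\mathrm{Mod}_g[\mathfrak{q}_i])\le\log\delta(\mathrm{Mod}_g[\mathfrak{q}_i]\cap\mathcal{H}(\Sigma_g)).$$
Penner's inequality $(\ref{equation_Penner})$ with $n=0$ gives $\log\delta(\mathrm{Mod}(\Sigma_g))\ge\frac{\log 2}{12g-12}$, which supplies the lower bound $\log\delta(G)\gtrsim 1/g$ for every subgroup $G$ in the statement. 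By Dye's theorem any quadratic form $\mathfrak{q}$ is conjugate in $\mathrm{Mod}(\Sigma_g)$ to $\mathfrak{q}_0$ or $\mathfrak{q}_1$, and conjugation preserves dilatation; hence once (1) and (2) are established, the upper bound $\log\delta(\mathrm{Mod}_g[\mathfrak{q}])\le\log\delta(\mathrm{Mod}_g[\mathfrak{q}_i]\cap\mathcal{H}(\Sigma_g))\lesssim 1/g$ follows, completing the ``in particular'' clause. Thus everything reduces to producing, for each of the two conjugacy classes of quadratic forms, a sequence of pseudo-Anosov elements of $\mathrm{Mod}_g[\mathfrak{q}_i]\cap\mathcal{H}(\Sigma_g)$ whose entropies are $O(1/g)$.

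To build such elements I would pass through the Birman--Hilden correspondence. Realize $\Sigma_g$ as the double cover of the sphere $\Sigma_{0,2g+2}$ branched over the $2g+2$ images of the Weierstrass points, so that the hyperelliptic involution $\mathcal{I}$ is the deck transformation and $\mathcal{H}(\Sigma_g)$ maps onto $\mathrm{Mod}(\Sigma_{0,2g+2})$ with every mapping class lifting and the lift having the same dilatation as the map downstairs. A braid $b\in B_{2g+1}$ yields the monodromy $f_b$ on the $F$-surface $\Sigma_{0,2g+2}$ (the $2g+1$ strand punctures together with the axis puncture playing the role of the branch points), and its hyperelliptic lift $\widetilde{f}_b\in\mathcal{H}(\Sigma_g)$ satisfies $\lambda(\widetilde f_b)=\lambda(b)$. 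Since $|\chi(\Sigma_{0,2g+2})|=2g$ while $|\chi(\Sigma_g)|=2g-2$, a braid of small normalized entropy, $\mathrm{Ent}(b)=2g\log\lambda(b)\le P$, lifts to an element with $\log\lambda(\widetilde f_b)\le P/(2g)\asymp 1/g$. Here I would feed in the skew-palindromic families produced for Theorem~\ref{thm_skp} (equivalently, apply Theorem~\ref{thm_nomalizedentropy} to a pseudo-Anosov seed braid whose permutation has a fixed point), which provide skew-palindromic pseudo-Anosov braids $z_n\in B_{2g+1}$ with $\mathrm{Ent}(z_n)$ bounded independently of $g$.

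The remaining and most delicate point is the spin condition: I must guarantee that $\widetilde f_b$ lies in the stabilizer $\mathrm{Mod}_g[\mathfrak{q}_i]$, that is, $\mathfrak{q}_i\circ(\widetilde f_b)_\ast=\mathfrak{q}_i$ on $H_1(\Sigma_g;\mathbb{Z}_2)$. The action of $\mathcal{H}(\Sigma_g)$ on $H_1(\Sigma_g;\mathbb{Z}_2)$ is the mod-$2$ reduction of the integral symplectic representation $B_{2g+1}\to\mathrm{Sp}(2g,\mathbb{Z})$ obtained from the reduced Burau representation at $t=-1$, so $(\widetilde f_b)_\ast$ is computable directly from the word $b$, and the set of spin structures is an affine space over $H^1(\Sigma_g;\mathbb{Z}_2)$ on which this action is explicit. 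I expect the skew-palindromic symmetry $skew(b)=b$ to be precisely the feature that forces $(\widetilde f_b)_\ast$ to preserve a quadratic form: reversing the braid word together with conjugation by the half twist $\Delta$ descends to the symmetry of $\Sigma_{0,2g+2}$ whose lift fixes the branch-point partition determining $\mathfrak{q}_i$. The main obstacle is therefore twofold: first, to verify that this symmetry genuinely fixes the quadratic form rather than merely permuting spin structures; and second, to realize \emph{both} the even form $\mathfrak{q}_0$ and the odd form $\mathfrak{q}_1$ within the small-entropy families, which I would arrange by adjusting the parity of the seed braid and of $2g+2$ (for instance by a controlled change in the number of strands or a fixed boundary twist) so that the associated Weierstrass subset has the cardinality mod $4$ that switches the Arf invariant, while keeping the normalized entropy bounded. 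Once both Arf invariants are attained with $\log\lambda\asymp 1/g$, parts (1) and (2) follow, and with them the full statement.
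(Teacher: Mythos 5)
Your reductions are sound and match the paper: Penner's inequality (\ref{equation_Penner}) supplies the lower bound, Dye's theorem plus conjugation-invariance of dilatation yields the ``in particular'' clause, and the Birman--Hilden correspondence (with $\lambda(\phi)=\lambda(Q(\phi))$ and $|\chi(\Sigma_g)|\asymp|\chi(\Sigma_{0,2g+2})|$) correctly converts braid families of small normalized entropy into hyperelliptic classes with $\log\lambda\asymp 1/g$. But the step you yourself flag as ``the most delicate point'' --- membership in $\mathrm{Mod}_g[\mathfrak{q}_i]$ --- is a genuine gap, and the heuristic you offer for it would not close it. Spin invariance is not a consequence of the skew-palindromic symmetry: mod $2$, a Dehn twist acts by the transvection $v\mapsto v+(v,[c])_2[c]$, and one computes $\mathfrak{q}\bigl((t_c)_*v\bigr)=\mathfrak{q}(v)+(v,[c])_2\bigl(\mathfrak{q}([c])+1\bigr)$, so $t_c$ preserves $\mathfrak{q}$ if and only if $\mathfrak{q}([c])=1$. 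Whether a product of twists preserves $\mathfrak{q}$ is therefore a condition on the values of $\mathfrak{q}$ on the (images of the) twisting curves, which is invisible to the relation $skew(b)=b$. At best such a symmetry shows the induced symplectic map is conjugate to itself by an involution, which constrains the orbit of the set of spin structures but fixes no particular quadratic form and certainly does not select an Arf invariant. In particular there is no reason the Theorem~\ref{thm_skp} families $\xi_p^{\bullet}$, $\eta_p^{\bullet}$ lie in either spin stabilizer, and the paper does not feed them into this theorem.

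What the paper actually does at this point is engineer \emph{different} braid families for each form: $o_p^{\bullet}=\sigma_1\sigma_2(\sigma_3\cdots\sigma_{4+2p})^2\sigma_{4+2p}$ for $\mathfrak{q}_1$ and $v_p^{\bullet}=(\sigma_1\cdots\sigma_{4+2p})^2\sigma_{4+2p}^3$ for $\mathfrak{q}_0$, chosen precisely so that the corresponding words in the twists $t_j$ decompose, by an induction on $p$ using the braid relations, into the spin-preserving generators $t_2,\,t_3,\,t_{j+1}t_jt_{j+1}^{-1},\,t_k^2$ of Lemma~\ref{lem_spin} (Hirose's criteria); this is the content of Lemma~\ref{lem_oddeven}, and it is the algebraic certificate of spin membership your sketch lacks. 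Since these are new families, their small normalized entropy also has to be proved rather than quoted from Theorem~\ref{thm_skp}: the paper verifies, via Lemma~\ref{lem_ef} applied to explicit $3$-braid seeds and Theorem~\ref{thm_sequence}(1), that the relevant $E$-surface classes are fibered, and then runs the prong count of Lemma~\ref{lem_foliation} to ensure that removing the distinguished strand and passing to the spherical shifted braid $S(sh(\,\cdot\,))$ preserves pseudo-Anosovness and dilatation --- a verification your proposal also leaves implicit. So both halves of the obstacle you name are real, and neither is resolved by the proposed symmetry argument or by a parity adjustment of the number of strands; without something playing the role of Lemmas~\ref{lem_spin} and \ref{lem_oddeven}, the proof does not go through.
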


\noindent
{\bf Acknowledgments.} 
We would like to thank Mitsuhiko Takasawa for helpful conversations and comments. 
The first author was supported by 
Grant-in-Aid for
Scientific Research (C) (No. 16K05156),
Japan Society for the Promotion of Science. 
The second author was supported by 
Grant-in-Aid for
Scientific Research (C) (No. 18K03299), 
Japan Society for the Promotion of Science.


\section{Preliminaries} 
\label{section_preliminaries}

\subsection{Links}

Let $L$ be a link in the $3$-sphere $S^3$. 
Let $\mathcal{N}(L)$ denote a  tubular neighborhood  of $L$ 
and  let $\mathcal{E}(L)$ denote the exterior of $L$, i.e. $\mathcal{E}(L)=S^3 \setminus \mathrm{int}(\mathcal{N}(L))$.

Oriented links $L$ and $L'$ in $S^3$ are {\it equivalent}, 
denoted by $L \sim L'$ 
if there is an orientation preserving homeomorphism $f: S^3 \rightarrow S^3$  
such that 
$f(L)= L'$ with respect to the orientations of the links. 
Furthermore for components $K_i$ of $L$ and $K_i'$ of $L'$ with $i= 1, \ldots, m$ 
if $f$ satisfies $f(K_i)= K_i'$ for each $i$, 
then 
$(L, K_1, \ldots, K_m)$ and $(L', K'_1, \ldots, K'_m)$ are {\it equivalent} 
and we write 
$$(L, K_1, \ldots, K_m) \sim (L', K'_1, \ldots, K'_m).$$

\subsection{Braid groups $B_n$ and spherical braid groups $SB_n$}  
\label{subsection_braidgroups}

%
%


%

Let 
$\delta_j= \sigma_1 \sigma_2 \cdots \sigma_{j-1}$ and  
 $\rho_j = \sigma_1 \sigma_2 \cdots \sigma_{j-2} \sigma_{j-1}^2$. 
 The half twist $\Delta_j$ is given by 
 $\Delta_j= \delta_j \delta_{j-1} \cdots \delta_2$.  
 We often omit the subscript $n$ in $\Delta_n$, $\delta_n$ and $\rho_n$ 
when they are precisely $n$-braids.

We put  indices $1, 2, \ldots, n$ from left to right on the bottoms of strands, 
and give an orientation of strands from the bottom to the top (Figure~\ref{fig_sigma_i}). 
The closure $\mathrm{cl}(b)$ is oriented by the strands. 
We think of 
$\mathrm{br}(b) = \mathrm{cl}(b) \cup A$   
as an oriented link in $S^3$ 
choosing   an orientation of $A= A_b$ arbitrarily. 
(In Section~\ref{section_imonotonic} we assign an orientation of the braid axis for {\it $i$-monotonic braids}).

If two braids are conjugate to each other, then 
their braided links are equivalent. 
Morton proved that the converse holds if their axises are preserved. 

\begin{thm}[Morton~\cite{Morton78}]
\label{thm_Morton}
If $(\mathrm{br}(b), A_b)$ is equivalent to $(\mathrm{br}(c), A_c)$ for braids $b, c \in B_n$, 
then $b$ and $c$ are conjugate in $B_n$.  
\end{thm}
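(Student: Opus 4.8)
The plan is to prove Morton's theorem, which characterizes when two braids in $B_n$ are conjugate in terms of the equivalence of their axis-preserving braided links. The key observation is that the braid axis $A$ is an unknot whose complement $S^3 \setminus A$ is an open solid torus (indeed $\mathrm{cl}(b)$ lives in a neighborhood of this solid torus, with $A$ as the core of the complementary solid torus). The closed braid $\mathrm{cl}(b)$ sits inside the open solid torus $V = S^3 \setminus \mathrm{int}(\mathcal{N}(A))$, and the fibration of $V$ over $S^1$ by meridian disks of the \emph{complementary} solid torus — equivalently the disk fibers bounded by longitudes of $A$ — meets $\mathrm{cl}(b)$ in exactly $n$ points on each disk. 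First I would set up this standard picture: a braid $b$ determines, and is determined by, the isotopy class of $n$ points moving around the solid torus $V$, i.e. a loop in the configuration space of $n$ points in the disk, whose monodromy is the geometric braid.

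Given an orientation-preserving homeomorphism $f: S^3 \to S^3$ carrying $(\mathrm{br}(b), A_b)$ to $(\mathrm{br}(c), A_c)$, the plan is to extract a conjugacy in $B_n$ as follows. Since $f(A_b) = A_c$ respecting orientations, $f$ restricts to a homeomorphism of the solid-torus complements carrying the disk-fibration structure of one to that of the other, at least after an isotopy. The main technical step is to promote $f$ to a \emph{fiber-preserving} homeomorphism: because both $\mathrm{cl}(b)$ and $\mathrm{cl}(c)$ are transverse to the respective open-book/disk fibrations of $S^3 \setminus A$, and $f$ matches the axes, one shows $f$ can be isotoped (rel the links) so that it carries meridian disks to meridian disks. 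This reduces the problem to comparing the two monodromies of $n$-point configurations in the disk, which differ precisely by the element of the mapping class group of the $n$-punctured disk induced by $f$ on a single fiber disk. That induced element is an element of $B_n$, and it conjugates $b$ to $c$.

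The heart of the argument — and what I expect to be the main obstacle — is the fiber-preserving isotopy step: showing that an arbitrary axis-preserving homeomorphism of the pair can be deformed to respect the disk fibration of the solid-torus complement. The subtlety is that $f$ need not a priori preserve the product structure; one must use the fact that the disk fibration is essentially canonical (the disks are the unique incompressible spanning surfaces up to isotopy, or one can invoke Waldhausen-type results on Haken manifolds, or the uniqueness of the Seifert-fibered / open-book structure of the unknot complement) to make $f$ fiber-preserving after isotopy rel $A_c \cup \mathrm{cl}(c)$. One also needs to track orientations carefully: the hypothesis that $f$ preserves the orientation of $A$ is exactly what forbids the homeomorphism from reversing the direction of travel around the solid torus, which would otherwise send $b$ to a reversed braid rather than a conjugate. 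Once fiber-preservation is established, the remaining identification of the induced disk homeomorphism with a conjugating braid is essentially formal, using the standard identification $\mathrm{Mod}(\Sigma_{0,n+1}, \partial) \cong B_n$ (up to the relevant center). I would therefore concentrate the effort on making the transverse link and the spanning disks rigidly compatible under $f$, and handle the resulting braid-group computation as a routine consequence.
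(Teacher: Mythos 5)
First, a point of order: the paper does not prove this statement at all --- it is quoted from Morton \cite{Morton78} as a black box --- so there is no internal proof to compare against. Your plan is essentially Morton's (and the standard) argument: pass to the exterior of the braided link, which fibers over $S^1$ with fiber an $n$-punctured disk and monodromy the braid, isotope the pair homeomorphism to be fiber-preserving, and read off a conjugating mapping class on a single fiber. Two remarks on your ``heart of the argument'' step. Uniqueness of the spanning disk of the unknot is not quite the right tool, because the isotopy must take place in the exterior of the \emph{whole} braided link; the correct statement is that two fibrations of a compact orientable irreducible $3$-manifold lying in the same class in $H^1$ are isotopic. And to invoke it you must first check that $f$ pushes the fiber class of $\mathcal{E}(\mathrm{br}(b))$ to the fiber class of $\mathcal{E}(\mathrm{br}(c))$, which uses that $f$ matches the orientations of \emph{all} components (hence all meridians), not only the orientation of the axis as you emphasize.

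The genuine gap is your parenthetical ``(up to the relevant center).'' As written, your final step only shows that $b$ and $c$ have conjugate images under $\Gamma: B_n \rightarrow \mathrm{Mod}(D_n)$, whose kernel is $\langle \Delta^2 \rangle$; this yields only that $b$ is conjugate to $c\Delta^{2k}$ in $B_n$ for some $k \in \mathbb{Z}$. This ambiguity is not formal and cannot be absorbed as ``routine'': the disk twist $T_{D_A}$ of Section 2.6 of this very paper gives homeomorphisms of exteriors $M_b \simeq M_{b\Delta^{2k}}$, while $b$ and $b\Delta^{2k}$ are never conjugate for $k \ne 0$ (their exponent sums differ by $kn(n-1)$), and indeed $\Gamma(b) = \Gamma(b\Delta^{2k})$. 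So fibered-exterior data alone, which is all your sketch uses at the end, genuinely fails to determine the conjugacy class in $B_n$. What saves the argument is that $f$ is defined on $S^3$, not merely on exteriors: a homeomorphism of triples $(S^3, \mathrm{br}(b), A_b) \rightarrow (S^3, \mathrm{br}(c), A_c)$ carries a tubular neighborhood of $A_b$ to one of $A_c$ and hence carries the meridian of the axis to the meridian (with matching signs, by the orientation hypotheses), whereas $T_{D_A}$ sends the meridian to meridian plus longitude. Preserving the meridian pins down the boundary framing of the fiber, upgrades the comparison of monodromies to mapping classes rel boundary, where $\mathrm{Mod}(D_n, \partial D) \cong B_n$ exactly and no central quotient intervenes, and forces $k=0$. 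Make this meridian bookkeeping explicit; with it, your outline closes.
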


Let us turn to the spherical braid group $SB_n$ with $n$ strands. 
We also denote by $\sigma_i$, 
the element of $SB_n$ as shown in Figure~\ref{fig_sigma_i}(1). 
The group $SB_n$ is  generated by $\sigma_1, \ldots, \sigma_{n-1} $. 
For a braid $b \in B_n$ represented by a word of letters $\sigma_j^{\pm 1}$,  
let $S(b)$ denote the element in $SB_n$ represented by the same word as $b$.

For a braid $b$ in  $B_n$ or $SB_n$ 
 the {\it degree} of $b$ means the number $n$ of the strands, 
 denoted by $d(b)$.

\subsection{Mapping classes and mapping tori from braids}
\label{subsection_frombraids}

Let $D_n$ be the $n$-punctured disk. 
Consider the mapping class group $\mathrm{Mod}(D_n)$, 
the group of isotopy classes of orientation preserving self-homeomorphisms on $D_n$ 
preserving  the boundary $\partial D$ of the disk setwise. 
We have a surjective homomorphism 
$$\Gamma: B_n \rightarrow \mathrm{Mod}(D_n)$$
which sends each generator $\sigma_i$ to the right-handed half twist $\mathfrak{t}_i$ 
between the $i$th and $(i+1)$st punctures. 
The kernel of $\Gamma$ is an infinite cyclic group generated by the full twist $\Delta^2 $.

Collapsing $\partial D$ to a puncture in the sphere 
we  have a homomorphism 
$$\mathfrak{c}: \mathrm{Mod}(D_n) \rightarrow \mathrm{Mod}(\Sigma_{0, n+1}).$$
We say that $b \in B_n$ is {\it periodic} (resp. {\it reducible}, {\it pseudo-Anosov}) if 
$f_b:= \mathfrak{c}(\Gamma(b)) $ is of the corresponding Nielsen-Thurston type. 
The braids $\delta, \rho \in B_n$ are periodic 
since some power of each braid is the full twist: 
$\Delta^2= \delta^n=\rho^{n-1} \in B_n$.

We also have a surjective homomorphism 
$$\widehat{\Gamma}: SB_n \rightarrow \mathrm{Mod}(\Sigma_{0,n})$$
sending each generator $\sigma_i$ to the right-handed half twist $\mathfrak{t}_i$. 
We say that $\eta \in SB_n$ is {\it pseudo-Anosov} if $ \widehat{\Gamma}(\eta) \in \mathrm{Mod}(\Sigma_{0,n})$ is  pseudo-Anosov. 
In this case $\lambda(\eta)$ is defined by the dilatation of $\widehat{\Gamma}(\eta)$.

 \subsection{Stable foliations $\mathcal{F}_b$ for pseudo-Anosov braids $b$}
 \label{subsection_stablefoliations}

 \begin{center}
\begin{figure}
\includegraphics[width=1.5in]{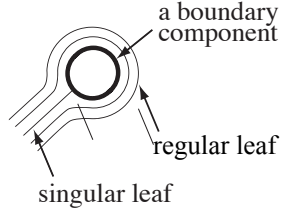}
\caption{Stable foliation which is $1$-pronged at a boundary component.}  
\label{fig_1prong}
\end{figure}
\end{center}

 Recall the surjective homomorphism $\pi: B_n \rightarrow \mathcal{S}_n$. 
We write  $\pi_b=  \pi(b)$ for $b \in B_n$. 
Consider a pseudo-Anosov braid $b \in B_n$ with $\pi_b(i)= i$. 
Removing the $i$th strand $b(i)$  from $b$, 
we get a braid $b-b(i) \in B_{n-1}$. 
Taking its spherical element, we have 
$S(b-b(i)) \in SB_{n-1}$. 
Note that $b-b(i) $ and $S(b-b(i)) $ are not necessarily pseudo-Anosov. 
A well-known criterion  uses the stable foliation $\mathcal{F}_b$ 
for the  monodromy $\phi_b: F_b \rightarrow F_b$ of a fibration on $M_b \rightarrow S^1$ 
as we recall now. 
Such a  fibration on $M_b$ extends naturally to a fibration on the manifold obtained from $M_b$ by Dehn filling a cusp 
along the boundary slope of the fiber $F_b$ which lies on the torus $\partial \mathcal{N}(\mathrm{cl}(b(i)))$.  
Also $\phi_b$ extends to the monodromy 
defined on $F_b^{\bullet} $ 
of the extended fibration, 
where  $F_b^{\bullet}$ is obtained from $F_b$ by filling in the boundary component of $F_b$ 
which lies on  $\partial \mathcal{N}(\mathrm{cl}(b(i)))$
with a disk. 
Then $b-b(i)$ is the corresponding braid for the extended monodromy defined on $F_b^{\bullet} $. 
Suppose that $\mathcal{F}_b$ is not $1$-pronged at the boundary component in question. 
(See Figure~\ref{fig_1prong} in the case where $F_b$ is $1$-pronged at a boundary component.)  
Then $\mathcal{F}_b$ extends  to the stable foliation  for 
$b-b(i)$, and hence $b-b(i)$  is pseudo-Anosov with the same dilatation as $b$. 
Furthermore if $\mathcal{F}_b$ is not $1$-pronged at the boundary component of $F_b$ 
which lies on $\partial \mathcal{N}(A)$, 
then  $S(b-b(i))$ is still pseudo-Anosov with the same dilatation as $b$.

 \subsection{Thurston norm}
 \label{subsection_thurstonnorm}

 Let $M$ be a $3$-manifold with boundary  (possibly $\partial M = \emptyset$). 
If $M$ is hyperbolic, i.e. the interior of $M$ possess a complete hyperbolic structure of finite volume, 
then there is a norm $\| \cdot \|$ on $H_2(M, \partial M; {\Bbb R})$, now called the Thurston norm~\cite{Thurston86}. 
The norm $\| \cdot \|$ has the property such that 
for any integral class $a  \in H_2(M, \partial M; {\Bbb R})$, 
$\|a\|= \min_S \{- \chi(S)\}$, 
where 
 the minimum is taken over all  oriented surface $S$ embedded in $M$ with $a= [S]$ 
and with no components  of non-negative Euler characteristic. 
The surface $S$ realizing this minimum  is called  a  {\it norm-minimizing surface} of $a $.

\begin{thm}[Thurston~\cite{Thurston86}]
\label{thm_norm}
The norm $\| \cdot \|$ on $H_2(M, \partial M; {\Bbb R})$ has the following properties. 
\begin{enumerate}
\item[(1)]
There are a set of maximal open cones 
$\mathcal{C}_1, \cdots, \mathcal{C}_k$ in $H_2(M, \partial M; {\Bbb R})$ 
and a bijection between the set of isotopy classes of connected fibers of fibrations $M \rightarrow S^1$ 
and the set of primitive integral classes in the union $\mathcal{C}_1 \cup \cdots \cup \mathcal{C}_k$. 

\item[(2)] 
The restriction of $\|\cdot\|$ to $\mathcal{C}_j$ is linear for each $j$. 

\item[(3)] 
If we let $F_a$ be a fiber of a fibration $M \rightarrow S^1$ associated with a primitive integral class $a$ in each $\mathcal{C}_j$, 
then $\|a\| = - \chi (F_a)$. 
\end{enumerate}
\end{thm}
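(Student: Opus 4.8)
The plan is to follow Thurston's original argument~\cite{Thurston86}, which separates into an algebraic part (the seminorm and its unit ball) and a geometric part (fibrations and the resulting cone structure). First I would verify that the function $a \mapsto \|a\| = \min_S\{-\chi(S)\}$, defined on integral classes by minimizing over embedded surfaces with no component of non-negative Euler characteristic, is a seminorm on $H_2(M,\partial M;{\Bbb Z})$. The homogeneity $\|na\| = |n|\,\|a\|$ follows by taking $n$ parallel copies of a minimizer for one inequality and, for the reverse, by an oriented cut-and-paste (double curve sum) that removes any sphere or disk components without increasing $-\chi$. The triangle inequality follows similarly: put minimizers for $a$ and $a'$ in general position and resolve their intersection curves by the oriented double curve sum, an operation that does not increase $-\chi$. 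By homogeneity and continuity this extends to a seminorm on $H_2(M,\partial M;{\Bbb R})$. Since $\|\cdot\|$ takes integer values on the lattice, the dual ball $\{\phi : \langle\phi,a\rangle \le \|a\| \text{ for all integral } a\}$ is cut out by integral inequalities and is the convex hull of finitely many lattice points; dualizing, the unit ball is a rational polyhedron with finitely many top-dimensional faces.

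Next I would establish (3), that a connected fiber realizes the norm of its class. A fibration $M \rightarrow S^1$ foliates $M$ by its fibers, and this foliation is taut because the circle directions furnish a closed transversal meeting every leaf. I would then invoke the Euler-class inequality for taut foliations: the Euler class $e(\mathcal{F})$ of the tangent plane field satisfies $|\langle e(\mathcal{F}), a\rangle| \le \|a\|$ for all $a$, with equality on the class of a leaf. Equivalently, and more concretely, I would pass to the infinite cyclic cover $\widehat{M} \cong F \times {\Bbb R}$ determined by the fibration and compare any embedded $S$ with $[S]=[F]$ to a horizontal copy $F \times \{t\}$, obtaining $-\chi(S) \ge -\chi(F)$. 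Either way $\|[F]\| = -\chi(F)$, which is exactly (3).

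Then I would produce the fibered cones and their linearity, giving (2) and the existence half of (1). Dualizing $H_2(M,\partial M;{\Bbb R}) \cong H^1(M;{\Bbb R})$, the fibration's class $u$ is represented by a nonsingular closed $1$-form $\omega$, the pullback of $d\theta$. On a compact manifold being nonsingular is an open condition, so every class near $u$ is also represented by a nonsingular closed $1$-form; by Tischler's theorem such a form with rational class can be perturbed to one defining a genuine fibration. Hence the set of fibered classes is open and invariant under positive scaling, i.e. a union of open cones. By the norm-minimality of (3), each fibered $u$ satisfies $\|u\| = \langle\phi, u\rangle$ for the integral supporting functional $\phi$ of a top-dimensional face of the unit ball, and the same $\phi$ computes $\|\cdot\|$ throughout the open cone over that face; so the norm restricts there to the single linear functional $\phi$. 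These cones over the distinguished (fibered) faces are the maximal open cones $\mathcal{C}_1,\dots,\mathcal{C}_k$, and every primitive integral class in $\mathcal{C}_1 \cup \cdots \cup \mathcal{C}_k$ is the class of a connected fiber.

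Finally, for the bijection in (1) I would check uniqueness: a connected fiber is determined up to isotopy by its homology class, since two fibrations inducing the same class have fibers that become isotopic in the cover $\widehat{M} \cong F \times {\Bbb R}$ and hence in $M$. The main obstacle is the geometric input of the second paragraph, namely that leaves of the taut (here, fibered) foliation are Thurston-norm-minimizing, together with the openness-plus-Tischler step that upgrades a single fibration to an open cone of fibrations; by contrast, the polytope bookkeeping of the first paragraph is comparatively routine once integrality of the norm is in hand.
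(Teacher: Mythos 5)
The paper does not prove this statement: it is quoted as background (Theorem~B, attributed to Thurston~\cite{Thurston86}), so there is no in-paper argument to compare against. Judged against Thurston's original proof, your sketch follows the same route and hits the right milestones: integrality of the seminorm forces a polyhedral unit ball; fibers are norm-minimizing (Euler-class inequality for the foliation by fibers, or the infinite cyclic cover comparison --- note the latter needs the degree argument made precise, e.g.\ the projection $S \to F$ has degree one and Kneser's inequality bounds $-\chi$ from below, with degree-zero components contributing nonnegatively); openness of the fibered classes via nonsingular closed $1$-forms and Tischler's theorem; and the fibered faces are top-dimensional. One small remark: the paper assumes $M$ hyperbolic, which is what guarantees $\|\cdot\|$ is a genuine norm (no norm-zero classes from essential tori or annuli), so the unit ball is compact; your polytope bookkeeping silently uses this.

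Two steps are glossed in a way worth flagging. First, your linearity argument is stated in a slightly circular order: you cannot start from ``the integral supporting functional $\phi$ of a top-dimensional face,'' because whether the fibered classes lie over a top-dimensional face is exactly what must be proved. The honest mechanism --- which you have the ingredients for, since you state the Euler-class inequality with equality on fibers --- is that nearby fibered classes are represented by nearby nonsingular $1$-forms whose kernel plane fields are homotopic, so the Euler class $e$ is locally constant, and $\|\cdot\| = \langle e, \cdot\rangle$ on an open set of classes; linearity on an open cone then forces the face to be top-dimensional, not the other way around. Second, in the bijection of (1), ``isotopic in the cover $\widehat{M} \cong F \times {\Bbb R}$ and hence in $M$'' is not automatic: an isotopy upstairs need not descend. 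The standard repairs are Stallings' fibration theorem together with Waldhausen's isotopy results for incompressible surfaces in Haken manifolds, or Fried's uniqueness of the cross-section up to isotopy along flow lines --- the latter being precisely what this paper quotes as Theorem~D and uses downstream.
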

We call the open cones $\mathcal{C}_j$  {\it fibered cones} 
and call integral classes in  $\mathcal{C}_j$  {\it fibered classes}.

\begin{thm}[Fried~\cite{Fried82}] 
\label{thm_Fried_1}
For a fibered cone $\mathcal{C}$ of a hyperbolic $3$-manifold $M$, 
there is a continuous function 
$\mathrm{ent}: \mathcal{C}  \rightarrow {\Bbb R}$ 
with the following properties. 
\begin{enumerate}

\item[(1)] 
For the monodromy  $\phi_a: F_a \rightarrow F_a$ of a fibration $M \rightarrow S^1$ 
associated with a primitive integral class $a \in \mathcal{C}$, 
we have 
$\mathrm{ent}(a) = \log (\lambda(\phi_a))$.

\item[(2)] 
$\mathrm{Ent}= \| \cdot\| \mathrm{ent}: \mathcal{C} \rightarrow  {\Bbb R}$  
is a continuous function which becomes constant on each ray through the origin.

\item[(3)]
If a sequence $\{a_n\}  \subset \mathcal{C}$ tends to a point $ \ne 0 $ 
in the boundary $\partial \mathcal{C}$
as $n $ tends to $\infty$, 
then $\mathrm{ent}(a_n) \to \infty$. 
In particular $\mathrm{Ent}(a_n) = \| a_n\| \mathrm{ent}(a_n)  \to \infty$. 
\end{enumerate}
\end{thm}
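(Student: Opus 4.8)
The plan is to exhibit every fiber whose class lies in $\mathcal{C}$ as a cross-section to a single flow on $M$, and to define $\mathrm{ent}$ from that flow. Fix a primitive integral class $a_0 \in \mathcal{C}$ (for instance the class of a known fiber $F_{a_0}$ with pseudo-Anosov monodromy $\phi_{a_0}$) and form the suspension flow $\psi_t$ of $\phi_{a_0}$ on $M$, whose orbits are transverse to $F_{a_0}$. First I would invoke Fried's cross-section theorem \cite{Fried82}: under Poincar\'e--Lefschetz duality $H_2(M,\partial M;{\Bbb R}) \cong H^1(M;{\Bbb R})$, the classes that are strictly positive on every closed orbit of $\psi_t$ form an open convex cone, whose primitive integral points are precisely the duals of cross-sections of $\psi_t$. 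I would identify this cone with the fibered cone $\mathcal{C}$ of Theorem~\ref{thm_norm}: each such cross-section is isotopic to a fiber $F_a$ and its first-return map is the monodromy $\phi_a$. Setting $\mathrm{ent}(a) := \log\lambda(\phi_a)$ on primitive integral classes records (1) as a definition.

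Next I would transport the dynamics of $\phi_{a_0}$ to the flow and let it act on all cross-sections at once. Suspending the invariant measured foliations $(\mathcal{F}^u,\mu^u)$, $(\mathcal{F}^s,\mu^s)$ of $\phi_{a_0}$ yields two $\psi_t$-invariant singular $2$-dimensional foliations on $M$ whose transverse measures are multiplied by $e^{ht}$ and $e^{-ht}$ under $\psi_t$, where $h = \log\lambda(\phi_{a_0})$. Each cross-section $F_a$ meets these transversally in a pair of measured foliations, and its return map $\phi_a$ is pseudo-Anosov with these as invariant foliations; thus $\lambda(\phi_a)$ is the constant by which the induced transverse measure expands in one return. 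Representing $a \in \mathcal{C}$ by a closed $1$-form $\alpha$ with $\alpha(\dot\psi) > 0$ and reparametrizing so that the return time measured by $\alpha$ is identically $1$, one obtains an intrinsic description of $\mathrm{ent}(a)$ that extends to all real $a \in \mathcal{C}$ and is homogeneous of degree $-1$. Combined with the linearity and homogeneity of $\|\cdot\|$ on $\mathcal{C}$ from Theorem~\ref{thm_norm}, this shows $\mathrm{Ent} = \|\cdot\|\,\mathrm{ent}$ is homogeneous of degree $0$, i.e. constant on rays through the origin, giving the homogeneity part of (2).

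The remaining content is continuity and the boundary blow-up, and here the essential analytic input is that $1/\mathrm{ent}$ extends to a continuous, \emph{concave}, degree-one homogeneous function on $\mathcal{C}$ that is positive on the interior; this is the substantive part of \cite{Fried82} (equivalently it follows from McMullen's Teichm\"uller polynomial, whose largest root realizes $\lambda(\phi_a)$). Granting concavity, a finite concave function on an open convex cone is automatically continuous, so $\mathrm{ent}$ and hence $\mathrm{Ent}$ are continuous, completing (2). For (3), if $\{a_n\} \subset \mathcal{C}$ tends to $a_* \in \partial\mathcal{C}$ with $a_* \neq 0$, then $a_*$ fails the strict positivity condition, so there is a closed orbit $\gamma$ of $\psi_t$ with $\langle a_*, [\gamma]\rangle = 0$; consequently $1/\mathrm{ent}(a_n) \to 0$, that is $\mathrm{ent}(a_n) \to \infty$, and since $\|a_*\| > 0$ also $\mathrm{Ent}(a_n) = \|a_n\|\,\mathrm{ent}(a_n) \to \infty$. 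The main obstacle is the concavity of $1/\mathrm{ent}$: the cross-section theorem supplies the common flow, but proving that the log-dilatation varies as the reciprocal of a concave homogeneous function---which simultaneously yields interior continuity and the clean vanishing at $\partial\mathcal{C}$---is the crux of the argument.
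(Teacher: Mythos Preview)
The paper does not prove this theorem; it is stated with attribution to Fried~\cite{Fried82} as a background result in the preliminaries and is used without proof. There is therefore no ``paper's own proof'' to compare against.

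That said, your outline tracks Fried's original argument faithfully: realize all fibers in $\mathcal{C}$ as cross-sections to a single suspension flow, read off the pseudo-Anosov data on each cross-section from the suspended foliations, and deduce the homogeneity and continuity properties from the flow-theoretic description of $\mathrm{ent}$. You correctly isolate the analytic crux --- that $1/\mathrm{ent}$ extends to a concave, degree-one homogeneous function on $\overline{\mathcal{C}}$ vanishing on $\partial\mathcal{C}$ --- and you are right that this is the substantive content of \cite{Fried82} (or, alternatively, of McMullen's Teichm\"uller polynomial). Your boundary argument in (3) is slightly informal: the existence of a closed orbit $\gamma$ with $\langle a_*,[\gamma]\rangle = 0$ does not by itself force $1/\mathrm{ent}(a_n)\to 0$ without the concavity/continuity input you have already invoked, so the sentence beginning ``consequently'' is really restating the conclusion of that input rather than giving an independent reason. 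But as a sketch of Fried's proof this is accurate.
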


\noindent
We call $\mathrm{ent}(a)$ and $\mathrm{Ent}(a)$ the {\it entropy} and  {\it normalized entropy} of 
the class $a \in \mathcal{C}$.


For a pseudo-Anosov element $\phi \in \mathrm{Mod}(\Sigma)$ 
we consider the mapping torus $M_{\phi}$. 
The vector field 
$\frac{\partial}{ \partial t}$ on $\Sigma \times {\Bbb R}$ induces a  flow $\phi^t$ on $M_{\phi}$ 
called the {\it suspension flow}.

\begin{thm}[Fried~\cite{Fried79}]
\label{thm_Fried}
Let $\phi$ be a pseudo-Anosov mapping class defined on $\Sigma$ 
with stable and unstable foliations $\mathcal{F}^s$ and $\mathcal{F}^u$.  
Let $\widehat{\mathcal{F}^s}$ and $\widehat{\mathcal{F}^u}$ 
denote the suspensions of $\mathcal{F}^s$ and $\mathcal{F}^u$ by $\phi$. 
If $\mathcal{C}$ is a fibered cone  containing the fibered class $[\Sigma]$, 
then 
we can modify 
a norm-minimizing surface 
$F_a$ associated with each  primitive integral class $a \in \mathcal{C}$
by an isotopy on $M_{\phi}$ with  the following properties. 
\begin{enumerate}
\item[(1)] 
$F_a$ is transverse to the suspension flow $\phi^t$, 
and the first return map $\phi_a:F_a \rightarrow F_a$ is precisely the pseudo-Anosov monodromy 
of the fibration on $M_{\phi} \rightarrow S^1$ associated with $a$. 
Moreover $F_a$ is unique up to isotopy along flow lines. 

\item[(2)] 
The stable and unstable foliations for  $\phi_a$ are given by 
$\widehat{\mathcal{F}^s} \cap F_a$ and $\widehat{\mathcal{F}^u} \cap F_a$.  
\end{enumerate}
\end{thm}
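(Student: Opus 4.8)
The plan is to realize Fried's statement as an instance of his general theory of cross-sections to flows, specialized to the suspension flow of a pseudo-Anosov map. First I would record the structure of the ambient flow: since $\mathcal{F}^s$ and $\mathcal{F}^u$ are $\Phi$-invariant, their suspensions $\widehat{\mathcal{F}^s}$ and $\widehat{\mathcal{F}^u}$ are well-defined singular $2$-dimensional foliations of $M_{\phi}$, each containing the flow direction $\tfrac{\partial}{\partial t}$ in its leaves, tangent to $\partial M_{\phi}$, and $\phi^t$ uniformly scales the transverse measures of $\widehat{\mathcal{F}^u}$ and $\widehat{\mathcal{F}^s}$ by $\lambda^{\pm t}$. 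Thus $\phi^t$ is a pseudo-Anosov flow whose only recurrence comes from the suspended periodic orbits of $\Phi$. The fiber $\Sigma$ is by construction a cross-section: it is transverse to $\phi^t$ and meets every orbit, with first return map $\phi$.

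Next I would invoke Fried's cross-section duality. Recall that the \emph{homology directions} of $\phi^t$ are the limits, in the space of projectivized $1$-cycles, of the homology classes of longer and longer almost-closed orbit segments; for our flow these span the closed convex cone generated by the homology classes of the closed orbits. Fried's theorem asserts that the set $\mathcal{D} \subset H^1(M_{\phi};{\Bbb R}) \cong H_2(M_{\phi}, \partial M_{\phi};{\Bbb R})$ of classes strictly positive on every homology direction is an open convex cone, and that every primitive integral class in $\mathcal{D}$ is dual to a cross-section. The construction behind this is the technical core: given $u \in \mathcal{D}$, a compactness/separation argument over the compact set of homology directions produces a closed $1$-form $\eta$ representing $u$ with $\eta(\tfrac{\partial}{\partial t}) > 0$ everywhere; when $u$ is integral one integrates $\eta$ to a fibration $M_{\phi} \to S^1$ whose fibers are transverse to $\phi^t$ and, by positivity of $\eta$ along the flow, meet every orbit, hence are cross-sections.

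I would then match this cone with Thurston's fibered cone. Because $[\Sigma] \in \mathcal{D} \cap \mathcal{C}$ and both are the component of the respective open cone containing the fiber class, one checks $\mathcal{C} = \mathcal{D}$: a class is positive on all homology directions precisely when its Poincar\'e dual is a nonvanishing fibration class over this component. Consequently each primitive integral $a \in \mathcal{C}$ is dual to a cross-section $S_a$; since a cross-section is a fiber it is norm-minimizing, so by Theorem~\ref{thm_norm} together with uniqueness of the fiber in its class, $S_a$ is isotopic to the norm-minimizing surface $F_a$. Transporting the isotopy, $F_a$ becomes transverse to $\phi^t$ with first return map the monodromy $\phi_a$. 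Uniqueness up to isotopy along flow lines is the standard fact that two cross-sections with equal cohomology class are flow-isotopic, by sliding one onto the other along orbits in the product region they cobound.

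Finally, for statement (2): since $F_a$ is transverse to the flow and $\phi^t$ preserves $\widehat{\mathcal{F}^s}$ and $\widehat{\mathcal{F}^u}$, the intersections $\widehat{\mathcal{F}^s} \cap F_a$ and $\widehat{\mathcal{F}^u} \cap F_a$ are transverse singular $1$-dimensional foliations on $F_a$, carrying the transverse measures restricted from the suspended ones. The first return map $\phi_a$ permutes orbit arcs of the flow, hence preserves these foliations; tracking the $\lambda^{\pm t}$ scaling of $\phi^t$ over a return time shows $\phi_a$ multiplies the two transverse measures by reciprocal constants, so $\phi_a$ is pseudo-Anosov with stable and unstable foliations exactly $\widehat{\mathcal{F}^s} \cap F_a$ and $\widehat{\mathcal{F}^u} \cap F_a$. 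The main obstacle throughout is the construction of the cross-section from a cohomology class, i.e. producing the flow-positive closed $1$-form from positivity on homology directions; the compactness of the space of homology directions, together with control of the singular orbits of the pseudo-Anosov flow, is what makes the separation argument work and is the delicate point of Fried's theory.
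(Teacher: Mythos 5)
The paper does not prove Theorem~\ref{thm_Fried}: it is stated as a quotation of Fried~\cite{Fried79}, so the only comparison available is with Fried's own argument. Your outline does follow that route --- suspension flow, homology directions, the duality identifying the cone of classes positive on homology directions with the fibered cone containing $[\Sigma]$, cross-sections produced from flow-positive closed $1$-forms, and flow-isotopy uniqueness of cross-sections in a fixed cohomology class. Those steps are correct in outline, with the caveat that the identification $\mathcal{C}=\mathcal{D}$, which you dispatch with ``one checks,'' is itself a substantive component of Fried's theory rather than a formality.

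There is, however, a genuine gap in your justification of part (2). The suspended measure is scaled uniformly by the \emph{flow}: $\widehat{\mu}^s(\phi^t(\alpha)) = \lambda^{-t}\,\widehat{\mu}^s(\alpha)$ for an arc $\alpha$ transverse to $\widehat{\mathcal{F}^s}$. But the first return map is $\phi_a(x)=\phi^{\tau(x)}(x)$ with a \emph{non-constant} return time $\tau$, so for an arc $\alpha \subset F_a$ one gets $\widehat{\mu}^s(\phi_a(\alpha)) = \int_{\alpha} \lambda^{-\tau(x)}\, d\widehat{\mu}^s$: the restricted measures transform by the non-constant densities $\lambda^{\mp\tau}$, not by reciprocal constants as you claim. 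Constant scaling on all transversal arcs would force $\tau$ to be constant, which occurs only when $a$ is proportional to $[\Sigma]$. Even cohomological constancy fails: if $\tau(x) = c + g(x) - g(\phi_a(x))$, then every closed orbit $\gamma$ of the flow would satisfy $(\mbox{period of } \gamma) = c\,\langle a, [\gamma]\rangle$, which is false in general --- correspondingly, $\log\lambda(\phi_a)$ varies non-linearly over $\mathcal{C}$ (compare Theorem~\ref{thm_Fried_1}), whereas your argument would make it proportional to a return time. In fact the restricted measure is not even holonomy-invariant for the induced foliation $\widehat{\mathcal{F}^s}\cap F_a$, because holonomy inside a two-dimensional leaf of $\widehat{\mathcal{F}^s}$ between two transversals lying in $F_a$ involves a net flow displacement. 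What is immediate is only that the \emph{product} of the two restricted measures is $\phi_a$-invariant, the densities $\lambda^{-\tau}$ and $\lambda^{+\tau}$ cancelling pointwise; that does not yield pseudo-Anosovness. The part of your argument showing $\phi_a$ preserves the two induced singular foliations is fine; the delicate point of Fried's theorem --- which your sketch locates instead in the construction of the flow-positive $1$-form --- is precisely the construction of the correct transverse invariant measures on $\widehat{\mathcal{F}^s}\cap F_a$ and $\widehat{\mathcal{F}^u}\cap F_a$ scaled by the constants $\lambda(\phi_a)^{\mp 1}$. Fried obtains these by a Perron--Frobenius-type argument for the return map (via Markov partitions, equivalently unique ergodicity of the induced foliations), not by restricting $\widehat{\mu}^s$ and $\widehat{\mu}^u$; your proposal needs that step supplied.
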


\subsection{Disk twist}
\label{subsection_disktwist}

\begin{figure}
\includegraphics[width=2.8in]{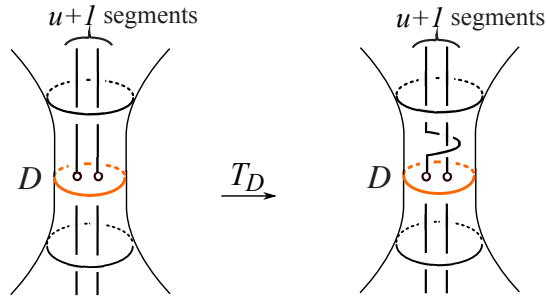}
\caption{Disk twist $T_D$.} 
\label{fig_dtwist}
\end{figure}

Let $L$ be a link in $S^3$. 
Suppose an unknot  $K$ is a component of $L$. 
Then the exterior $\mathcal{E}(K)$ (resp. $\partial \mathcal{E}(K)$)
is a solid torus (resp. torus). 
We take a disk $D$ bounded by the longitude of a tubular neighborhood  $ \mathcal{N}(K)$ of $K$. 
We define a mapping class $T_D$ defined on $\mathcal{E}(K)$ 
as follows. 
We cut $ \mathcal{E}(K)$ along $D$. 
 We have  resulting two sides obtained from $D$, and reglue two sides by twisting either of the sides  $360$ degrees 
so that the mapping class defined on $\partial  \mathcal{E}(K)$ 
is the right-handed Dehn twist about $\partial D$. 
Such a mapping class on $ \mathcal{E}(K)$ is called the {\it disk twist about} $D$. 
For simplicity we also call a self-homeomorphism representing the mapping class $T_D$ 
the {\it disk twist} about  $D$, 
and denote it by the same notation 
$$T_D:  \mathcal{E}(K) \rightarrow  \mathcal{E}(K) .$$
Clearly  $T_D$ equals the identity map outside a neighborhood of $D$ in $\mathcal{E}(K)$. 
We observe that if $u+1$ segments of $L-K$ pass through $D$ for $u \ge 1$, then 
$T_D(L- K)$ is obtained from $L- K$ by adding the full twist  near $D$. 
In the case  $u=1$, see Figure~\ref{fig_dtwist}. 
We may assume that $T_D$ fixes one of these segments, 
since any point in $D$ becomes the center of the twisting about $D$. 

For any integer $\ell $, consider a homeomorphism 
$$ T_D^{\ell}:  \mathcal{E}(K) \rightarrow   \mathcal{E}(K).$$
Observe that $ T_D^{\ell}$  converts $L $ into a link 
 $K \cup T_D^{\ell}(L- K)$ 
 such that $S^3 \setminus L$ is homeomorphic to $S^3 \setminus (K \cup T_D^{\ell}(L- K))$. 
 Then $ T_D^{\ell}$ induces a homeomorphism between the exteriors of links 
 \begin{equation}
 \label{equation_homeo}
 h_{D,\ell}: \mathcal{E}(L) \rightarrow \mathcal{E}(K \cup T_D^{\ell}(L-K)).
 \end{equation}
 We use the homeomorphism in (\ref{equation_homeo}) in later section.

\section{$i$-increasing braids and Theorem~\ref{thm_positivemain}}
\label{section_imonotonic}

\begin{center}
\begin{figure}
\includegraphics[width=2.8in]{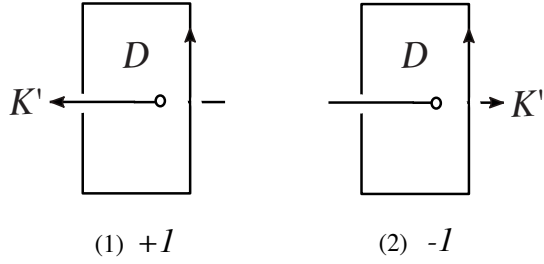} 
\caption{Sign of the point of intersection:  
$+1$ in  (1) and $-1$ in (2).}
\label{fig_intersection}
\end{figure}
\end{center}

\begin{center}
\begin{figure}
\includegraphics[width=4in]{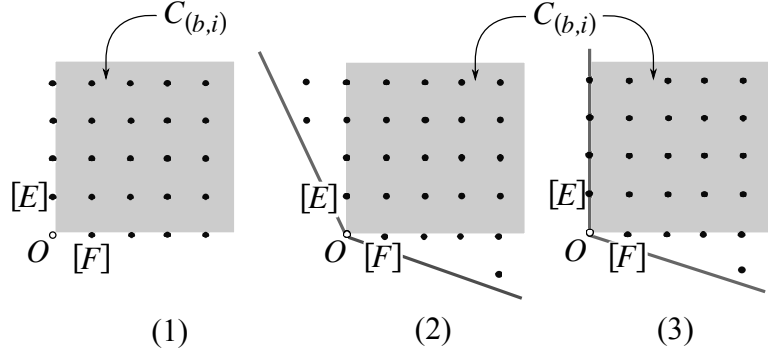} 
\caption{$F:= F_b$ and $E:= E_{(b,i)}$. 
(1) Subcone $ C_{(b,i)}$. 
(2)(3) Possible shapes of $\mathcal{C} \cap \{x[F]+ y[E]\ |\ x,y \in {\Bbb R}\}$. 
In case (2), 
$[E] \in \mathcal{C}$. 
In case (3), 
$[E] \notin \mathcal{C}$.} 
\label{fig_levelset}
\end{figure}
\end{center}

\subsubsection*{Definitions of $i$-increasing braids,  signs and intersection numbers}
Let $L$ be an oriented link in $S^3$ with a trivial component $K$. 
We take an oriented disk $D$ bounded by the longitude  of $\mathcal{N}(K)$ 
so that the orientation of $D$ agrees with the orientation of $K$. 
For each component $K'$ of $L-K$ such that 
$D$ and $K'$ intersect transversally with $D \cap K' \ne \emptyset$, 
we assign each point of intersection  $+1$ or $-1$ 
as shown in Figure~\ref{fig_intersection}.

Let $b $ be a braid with $\pi_b(i)= i$. 
We consider an oriented disk $D= D_ {(b,i)}$ bounded by the longitude $\ell_i$ of $\mathcal{N}(\mathrm{cl}(b(i)))$. 
Such a disk $D$ is unique up to isotopy on $\mathcal{E}(\mathrm{cl}(b(i)))$. 
We say that  a braid $b \in B_n$ with $\pi_b(i)= i$ is {\it $i$-increasing} (resp. {\it $i$-decreasing}) 
if 
there is a disk $D= D_{(b,i)} $ as above 
with the following conditions. 
\begin{enumerate}
\item[(D1)]
There is at least one component $K'$ of $\mathrm{cl}(b-b(i))$ 
such that $D \cap K' \ne \emptyset$. 

\item[(D2)]
Each component of $\mathrm{cl}(b-b(i))$ and $D$ intersect with each other transversally, 
and every point of intersection has the sign $+1$ (resp. $-1$). 
\end{enumerate}
We set $\epsilon(b,i)=1$ (resp. $\epsilon(b,i)= -1$), 
and call it the {\it sign} of the pair $(b,i)$. 
We also call $D$ the {\it associated disk} of the pair $(b,i)$. 
We say that $b$ is {\it $i$-monotonic} if $b$ is $i$-increasing or $i$-decreasing. 
Then we set 
$$I(b,i)= D \cap \mathrm{cl}(b-b(i))$$
and let $u(b,i) \ge 1$ be the cardinality of  $I(b,i)$. 
We call $u(b,i)$ the {\it intersection number} of the pair $(b,i)$. 
If the pair $(b,i)$ is specified, 
then we simply denote  $\epsilon(b,i)$ and  $u(b,i)$ by $\epsilon$ and  $u$ respectively.  
For example 
$ \sigma_1^{2} \sigma_2^{-1} $ is $1$-increasing with $u(\sigma_1^{2} \sigma_2^{-1},1)=1$. 

A braid $b $ is {\it positive} 
if $b$ is represented by a word in letters $\sigma_j$, but not $\sigma_j^{-1}$. 
A braid $b$ is irreducible if the Nielsen-Thurston type of $b$ is not reducible.

\begin{lem}
\label{lem_positive}
Let $b$ be a positive braid with  $\pi_b(i)= i$.
Then $b$ is $i$-increasing if $b$ is irreducible. 
\end{lem}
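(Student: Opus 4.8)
The plan is to verify conditions (D1) and (D2) directly for the disk $D = D_{(b,i)}$ bounded by $\ell_i$, exploiting that positivity of $b$ controls the signs of all intersections while irreducibility guarantees that some intersection occurs. Since $\pi_b(i) = i$, the strand $b(i)$ runs monotonically from the bottom to the top of the braid and never crosses itself, so its closure $\mathrm{cl}(b(i))$ projects to a simple closed curve in the braid diagram. I would take $D$ to be the disk bounded by this planar curve (pushed slightly off the diagram plane), which is the distinguished disk appearing in the definition of an $i$-increasing braid. With this choice the points of $I(b,i) = D \cap \mathrm{cl}(b-b(i))$ come exactly from the crossings in which the strand $b(i)$ participates, namely the occurrences of $\sigma_{i-1}$ and $\sigma_i$ acting on $b(i)$.

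First I would establish (D2). The algebraic intersection number of $D$ with each component $K'$ of $\mathrm{cl}(b - b(i))$ equals the linking number $\mathrm{lk}(\mathrm{cl}(b(i)), K')$. Because $b$ is positive, every crossing of the diagram, and in particular every crossing between $b(i)$ and a strand of $K'$, has sign $+1$; hence $\mathrm{lk}(\mathrm{cl}(b(i)), K') \ge 0$, with strict inequality whenever $b(i)$ meets $K'$ at all. A direct inspection of a single positive crossing --- comparing the co-orientation of $D$ induced by the orientation of $\mathrm{cl}(b(i))$ with the upward orientation of the piercing strand --- shows that each geometric intersection point carries the sign $+1$ in the convention of Figure~\ref{fig_intersection}. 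Thus every point of $I(b,i)$ is positive, which is exactly (D2).

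The main obstacle is to rule out negative intersection points that could in principle arise when a strand of $\mathrm{cl}(b-b(i))$ weaves back and forth across $b(i)$: a priori such a strand could pierce $D$ in a cancelling pair of opposite signs, so that the nonnegative \emph{algebraic} count would not by itself certify (D2). The key point is that positivity forbids this. Since all crossings along the diagram are positive, the successive piercings of $D$ by a fixed strand are forced to have a consistent orientation relative to the fixed co-orientation of $D$, and the local model of a positive crossing leaves no room for an oppositely oriented piercing. Consequently the geometric intersection number equals the nonnegative algebraic one and all signs are $+1$. This is the step I expect to require the most careful bookkeeping, and where the hypothesis that $b$ is positive is used in full.

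It remains to secure (D1), and this is where irreducibility enters. Condition (D1) holds precisely when $\mathrm{lk}(\mathrm{cl}(b(i)), K') \ne 0$ for some component $K'$, equivalently when the strand $b(i)$ crosses some other strand. If instead $b(i)$ were crossing-free, then a word for $b$ would involve neither $\sigma_{i-1}$ nor $\sigma_i$, so $b$ would split as a product of braids on $\{1, \dots, i-1\}$ and on $\{i+1, \dots, n\}$ with $b(i)$ passive; the monodromy $f_b$ would then preserve an essential multicurve separating these puncture groups, making $b$ reducible (the extreme cases $i = 1, n$ being handled directly). This contradicts irreducibility, so $b(i)$ must cross some strand, and by positivity the corresponding linking number is strictly positive, giving a nonempty --- and by (D2) positive --- intersection. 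Combining (D1) and (D2) shows that $b$ is $i$-increasing.
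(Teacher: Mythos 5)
Your proposal is correct, and while its first half matches the paper, its second half takes a genuinely different route. On (D2) you and the authors agree in substance: the paper disposes of it in a single sentence (``Since $b$ is positive, there is a disk $D=D_{(b,i)}$ with the condition (D2)''), whereas you spell out the mechanism --- every strand of a braid is monotone in the vertical direction and every crossing of a positive diagram has sign $+1$, so each passage of another strand beneath $b(i)$ is forced to occur in one fixed transverse direction, ruling out cancelling pairs of piercings of the Seifert disk for $\mathrm{cl}(b(i))$. That is the right idea and is exactly the bookkeeping the paper leaves implicit; note only the small slip that the relevant piercings correspond to crossings involving the strand $b(i)$ \emph{wherever it travels}, not literally to occurrences of $\sigma_{i-1}$ and $\sigma_i$, since $\pi_b(i)=i$ does not force $b(i)$ to stay at position $i$ in the given positive word (your later step, where crossing-freeness \emph{implies} $b(i)$ stays put and the word omits $\sigma_{i-1},\sigma_i$, is stated in the correct order). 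The real divergence is in (D1). The paper argues directly with the surface: assuming $D\cap \mathrm{cl}(b-b(i))=\emptyset$, it takes a neighborhood of $\partial D_n\cup(D_n\cap D)$ in the punctured disk $D_n$, observes that a boundary component of this annulus is an essential simple closed curve preserved by $\Gamma(b)$, and contradicts irreducibility --- positivity enters (D1) only through the choice of $D$. You instead pass through homology and word combinatorics: emptiness of $D\cap K'$ gives $\mathrm{lk}(\mathrm{cl}(b(i)),K')=0$, positivity upgrades this vanishing to the absence of \emph{any} crossing of $b(i)$, hence the positive word splits across position $i$ and $\Gamma(b)$ preserves a round curve about one of the blocks of punctures. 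Both arguments manufacture an invariant essential curve, but the paper's is shorter and purely topological, reading the curve off the trace of $D$ on the fiber, while yours is more elementary, makes positivity do double duty, and exhibits the reducing curve explicitly as a round curve. Your treatment of the extreme cases $i=1,n$ is fine; for completeness you should also dispose of the degenerate middle case where both blocks have fewer than two punctures (e.g.\ $n=3$, $i=2$, where the split word is empty and $b$ is trivial, hence not irreducible), but this is cosmetic.
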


\begin{proof}
Suppose that a positive braid $b$ with  $\pi_b(i)= i$ is irreducible. 
Since $b$ is positive, there is a disk $D= D_{(b,i)}$ with the condition $(D2)$. 
Assume that $D$ fails in  $(D1)$. 
Let $\partial D_n$ be the boundary of the disk $D_n$ containing $n$ punctures. 
Consider a neighborhood of $\partial D_n \cup (D_n \cap D)$ in $D_n$ 
which is an annulus. 
One of the boundary components of this annulus is  an essential simple closed curve in $D_n$ 
preserved by $\Gamma(b) \in \mathrm{Mod}(D_n)$. 
This means that $b$ is reducible, a contradiction. 
Thus  $D$  satisfies $(D1)$, and $b$ is $i$-increasing. 
\end{proof}

\subsubsection*{Orientation of the axis $A$} 
Let $b $ be  $i$-monotonic with  $ \epsilon(b,i)= \epsilon$ and $u(b,i)= u$.  
Consider the braided link $\mathrm{br}(b)= \mathrm{cl}(b) \cup A$. 
The associated disk $D$ has a unique point of intersection with  $A$, 
and the cardinality of $I(b,i) \cup (D \cap A) $ is  $u(b,i)+1$. 
To deal with $\mathrm{br}(b)= \mathrm{cl}(b) \cup A$ as an oriented link, 
we consider an orientation of $\mathrm{cl}(b)$ as we described before, and 
assign an orientation of $A $ 
so that the sign of the intersection between  $D$ and $A$ coincides with $\epsilon(b,i)$. 
See Figure~\ref{fig_closure}(2). 

Recall that $M_b= \mathcal{E}(\mathrm{br}(b))$ is the exterior of $\mathrm{br}(b)$ which is a surface bundle over $S^1$. 
We consider an orientation of the $F$-surface $F_b$ which agrees with the orientation of $A$.

\subsubsection*{$E$-surface}  
We now define an oriented surface $E_{(b,i)}$ of genus $0$ embedded in $M_b$. 
Consider small $u(b,i)+1$ disks in the oriented disk $D=D_{(b,i)}$ whose centers are points of $I(b,i) \cup (D \cap A) $. 
Then  $E_{(b,i)} $ is a  sphere with $u(b,i)+2$ boundary components 
obtained from $D$ by removing the interiors of those small disks. 
We choose the orientation of $E_{(b,i)}$ so that it agrees with the orientation of $D$. 
We call $E_{(b,i)}$ the {\it $E$-surface} for $b$. 
For example, the $1$-increasing braid $\sigma_1^2 \sigma_2^{-1} $ has the $E$-surface 
$E_{(\sigma_1^2 \sigma_2^{-1} , 1)}$
homeomorphic to a $3$-holed sphere.

\subsubsection*{Subcone $C_{(b,i)}$}  
Let us consider the $2$-dimensional subcone 
$C_{(b,i)}$ of $H_2(M_b, \partial M_b; {\Bbb R})$ spanned by $[F_b]$ and $[E_{(b,i)}]$ 
(Figure~\ref{fig_levelset}): 
$$C_{(b,i)} = \{x[F_b]+ y[E_{(b,i)}]\ |\ x>0, \ y > 0\}.$$ 
Let $\overline{C_{(b,i)}}$ denote the closure of $C_{(b,i)}$. 
We write $(x,y)= x[F_b]+ y[E_{(b,i)}] $. 
We prove the following theorem in Section~\ref{section_positivemain}.

\begin{thm}
\label{thm_positivemain}
For a pseudo-Anosov, $i$-increasing braid $b$ with $u(b,i)= u$, 
let $\mathcal{C}$ be the fibered cone containing $[F_b] $. 
We have  the following.

\begin{enumerate}
\item[(1)]
$C_{(b,i)} \subset \mathcal{C}$. 

\item[(2)] 
The fiber $F_{(x,y)}$ for each primitive integral class $(x,y) \in C_{(b,i)} $ 
 has genus $0$. 

\item[(3)] 
The monodromy 
$ \phi_{(x,y)}: F_{(x,y)} \rightarrow F_{(x,y)}$  for each primitive integral class $(x,y) \in C_{(b,i)} $
is conjugate to 
$$(\omega_1 \psi)  \cdots (\omega_{u-1} \psi) (\omega_u \psi) \psi^{m-1}: F_{(x,y)} \rightarrow F_{(x,y)},$$ 
where $m \ge 1$ depends on $(x,y)$,   
$\psi$ is periodic and 
each $\omega_j$ is reducible. 
Moreover there are 
homeomorphisms $\widehat{\omega}_j: S_0 \rightarrow S_0$ 
  for $j = 1, \ldots,  u$ on a surface $S_0$ 
determined by $b$ and 
an embedding $h: S_0 \hookrightarrow F_{(x,y)}$
such that the subsurface 
$h(S_0)$ of $F_{(x,y)}$ is the support of each $w_j$ 
and 
$$w_j|_{h(S_0)} =  h \circ \widehat{\omega}_j \circ h^{-1}.$$ 
\end{enumerate}
\end{thm}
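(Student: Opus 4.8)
The plan is to realize every fiber $F_{(x,y)}$ as a cross-section of the suspension flow $\phi_b^t$ on $M_b$ and to read off both its topology and its first-return map from this flow picture, using Theorem~\ref{thm_Fried}. Throughout I would treat $F_b$ and $E_{(b,i)}$ as the two building surfaces and keep track of how the $i$-increasing hypothesis controls their interaction with the flow.

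First I would prove (1). By Fried's theorem the fibered cone $\mathcal{C}$ containing $[F_b]$ coincides with the cone of homology classes represented by surfaces positively transverse to $\phi_b^t$ (cross-sections), and $F_b$ itself is such a cross-section. The role of the $i$-increasing hypothesis is to force $E_{(b,i)}$ to be one as well: the associated disk $D=D_{(b,i)}$ meets $\mathrm{br}(b)=\mathrm{cl}(b)\cup A$ in exactly the $u+1$ points of $I(b,i)\cup(D\cap A)$, and by condition (D2) together with the chosen orientation of $A$ every such point has sign $+1=\epsilon(b,i)$. Since near each puncture of $E_{(b,i)}$ the flow runs parallel to the corresponding strand of $\mathrm{br}(b)$, this common sign says precisely that $\phi_b^t$ crosses $E_{(b,i)}$ everywhere in the same positive direction after a small isotopy; hence $E_{(b,i)}$ is a cross-section and $[E_{(b,i)}]\in\overline{\mathcal{C}}$. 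As positive combinations of two positively transverse surfaces are again positively transverse after resolving their intersections, every class $(x,y)$ with $x,y>0$ is represented by a cross-section, so $C_{(b,i)}\subset\mathcal{C}$; Figure~\ref{fig_levelset}(2),(3) merely records whether $[E_{(b,i)}]$ itself is interior to $\mathcal{C}$ or lies on $\partial\mathcal{C}$.

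For (2) I would construct $F_{(x,y)}$ concretely as the oriented resolution of $x$ parallel copies of $F_b$ and $y$ parallel copies of $E_{(b,i)}$, made transverse by Theorem~\ref{thm_Fried}. By the linearity of the Thurston norm on $\mathcal{C}$ (Theorem~\ref{thm_norm}(2),(3)) one has $-\chi(F_{(x,y)})=x\,|\chi(F_b)|+y\,|\chi(E_{(b,i)})|$, so it remains to count the number $r$ of boundary components; genus $0$ then follows from $\chi=2-2g-r$. Since $F_b$ and $E_{(b,i)}$ are both planar and their intersection arcs are governed by the $u+1$ positively-signed points of $D$, the resolution can be arranged so that no handle is created, keeping $F_{(x,y)}$ planar. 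Equivalently, I can present $F_{(x,y)}$ through the disk-twist homeomorphism $h_{D,\ell}$ of \eqref{equation_homeo}: moving in the $[E_{(b,i)}]$-direction corresponds to inserting full twists on the $u+1$ segments through $D$, which realizes $F_{(x,y)}$ as the genus-$0$ $F$-surface of an explicit twisted braid with more strands, the description that also feeds the sequence $\{z_n\}$ in Theorem~\ref{thm_nomalizedentropy}.

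The hard part will be (3), the exact factorization of the monodromy. Here I would analyze the first-return map of $\phi_b^t$ on the cross-section $F_{(x,y)}$, which splits into two kinds of regions: neighborhoods of the $u$ points of $I(b,i)$, where the flow passes through $D$ and the twisting is concentrated, and the complementary region through which the flow simply rotates once around the axis direction. On the complement the first return is a periodic rotation $\psi$ of the punctured sphere, finite order because the surface has genus $0$ and of a type independent of $(x,y)$ (cf. Figure~\ref{fig_flm}(1) and Remark~\ref{rem_periodic}); near the $j$-th puncture the return map restricts to a reducible homeomorphism $\omega_j$ supported on a fixed subsurface $h(S_0)$, modelled by a map $\widehat{\omega}_j\colon S_0\to S_0$ read from $b$ and the local twisting, so that $\omega_j|_{h(S_0)}=h\circ\widehat{\omega}_j\circ h^{-1}$. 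Bookkeeping the cyclic order in which a flow line meets the $u$ twisting regions and the rotations between them should yield the product $(\omega_1\psi)\cdots(\omega_u\psi)\,\psi^{m-1}$, with $m\ge 1$ determined by the ratio recorded by $(x,y)$. Pinning down the supports $h(S_0)$, the local models $\widehat{\omega}_j$, and the precise exponent $m$ is the combinatorial core, which I would organize via a Markov partition (or train track) for the first-return map subordinate to the foliation $\mathcal{F}_b\cap F_{(x,y)}$ supplied by Theorem~\ref{thm_Fried}(2).
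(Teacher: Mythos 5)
Your outline is built on flow transversality, but its first step already fails: you claim that, because every point of $I(b,i)\cup(D\cap A)$ has sign $+1$, a \emph{small} isotopy makes $E_{(b,i)}$ a cross-section of the suspension flow $\phi_b^t$. If $E_{(b,i)}$ were a cross-section, $[E_{(b,i)}]$ would be a fibered class lying in the open cone $\mathcal{C}$; but Remark~\ref{rem_e} (see Figure~\ref{fig_levelset}(3)) shows $[E_{(b,i)}]$ may be non-fibered and lie on $\partial\mathcal{C}$, so no isotopy, small or large, can achieve this in general. Concretely, with the normalization $\diamondsuit 1,2$ the disk $D$ contains ``vertical walls'' (the pieces over $A_0\times[-1,2]$, $Q_i\times[-1,2]$ and $r_i$) along which the flow direction is tangent to $D$; a flow-parallel wall cannot be made transverse to the flow by a small perturbation. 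What your strategy would actually require is nonnegativity of $[E_{(b,i)}]$ against all homology directions of $\phi_b^t$, and the positivity of the $u$ strand crossings does not immediately control the signed crossings of arbitrary closed orbits through $D$ — you give no argument for this. The paper proves (1) with no transversality at all: the disk twist about $D$ yields braids $b_p$ with $M_{b_p}\simeq M_b$ and $[F_{b_p}]=(1,p)$ (Lemma~\ref{lem_iso_1}), the rescaled classes $c_p$ all lie on the segment $s$ of \eqref{equation_s} with constant Thurston norm $n-1$, and linearity of the norm on fibered cones (Theorem~\ref{thm_norm}) forces $s\subset\mathcal{C}$.

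In (2) your key assertion — that the oriented resolution of $x$ copies of $F_b$ and $y$ copies of $E_{(b,i)}$ ``can be arranged so that no handle is created'' — is precisely the statement to be proved, and it is not automatic: cut-and-paste sums of planar surfaces can create genus, and norm linearity only pins down $\chi(F_{(x,y)})$, not the split between genus and boundary components. The paper's proof instead transports $(x,y)$ to the class $(1,0)$ of a new braided link by an \emph{alternating} composition $g_{p_\ell}f_{p_{\ell-1}}g_{p_{\ell-2}}\cdots$ of the two disk twists, about $D$ and about the axis disk $D_A$, with exponents read off from a continued-fraction expansion of $y/x$ of odd length (Lemmas~\ref{lem_iso_1}, \ref{lem_iso_2}); your sketch invokes only the twist about $D$, which reaches only the classes $(1,p)$, not all of $C_{(b,i)}$, and the Euclidean-algorithm alternation is the missing idea. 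For (3) you explicitly defer the ``combinatorial core'' to an unspecified Markov-partition bookkeeping, so the factorization is not established. The paper's mechanism is algebraic, not dynamical: Lemma~\ref{lem_normalform} conjugates $b$ into the standard form $(\nu_1\rho)\cdots(\nu_u\rho)$ with each $\nu_j$ supported off the last strand and $\rho$ periodic; the axis twist contributes $\Delta^{2p}=\rho^{p(d-1)}$, which is the sole source of the exponent $m$; and an induction on the continued-fraction length shows every $b[p_1,\ldots,p_\ell]$ retains this form, which simultaneously produces $\psi$, the $\widehat{\omega}_j$, and the uniform support $h(S_0)$. Without this (or an equivalently concrete computation) your outline yields neither the product decomposition nor the exponent $m$.
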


The conclusion of Theorem~\ref{thm_positivemain} holds for 
$i$-decreasing braids as well. 
We now claim that  Theorem~\ref{thm_positivemain} implies  Theorem~\ref{thm_main}.

\begin{proof}[Proof of Theorem~\ref{thm_main}]
Suppose that Theorem~\ref{thm_positivemain} holds. 
Let $b \in B_n$ be a pseudo-Anosov braid such that $\pi_b(i)= i$. 
We consider the braid $b \Delta^{2k} \in B_n$ for $k \ge 1$. 
The full twist $\Delta^2$ is an element in  the center $Z(B_n)$ and 
$\Delta^2 = \sigma_j P_j$ holds for each $1 \le j \le n-1$, 
where $P_j$ is positive. 
Such properties  imply that 
$b \Delta^{2k}$ is  positive  for $k$ large. 
We fix such large $k$. 
Since $\Gamma(b)= \Gamma(b \Delta^{2k})$ in $\mathrm{Mod}(D_n)$, 
the braid $b \Delta^{2k}$ is certainly pseudo-Anosov. 
Hence it is $i$-increasing by Lemma~\ref{lem_positive}. 
One can apply Theorem~\ref{thm_positivemain} for this braid, 
and obtains the subcone $C_{(b \Delta^{2k}, i)}$. 
Consider the $k$th power of the disk twist  about  the disk $D_A$ 
bounded by the longitude of $\mathcal{N}(A)$: 
$$T_{D_A}^{k}: \mathcal{E}(A) \rightarrow  \mathcal{E}(A).$$ 
Since 
$A \cup T_{D_A}^k(\mathrm{cl}(b)) = A \cup \mathrm{cl}(b \Delta^{2k}) = \mathrm{br}(b \Delta^{2k})$, 
we have 
$S^3 \setminus \mathrm{br}(b) \simeq S^3 \setminus \mathrm{br}(b\Delta^{2 k})$.  
Let us set 
$$f_k:= h_{D_A, k}: M_b \rightarrow M_{b\Delta^{2 k}},$$ 
where $h_{D_A, k}$ is the homeomorphism in  (\ref{equation_homeo}). 
The isomorphism 
$${f_k}_*: H_2( M_b, \partial M_b) \rightarrow H_2(M_{b\Delta^{2 k}}, \partial M_{b\Delta^{2 k}})$$
sends $[F_b]$ to $[F_{b \Delta^{2 k}}]$. 
(Here we note that 
the above $k$ is suppose to be  large, 
but the homeomorphism $f_k$ 
makes sense for all integer $k$.)  
The pullback of the subcone $C_{(b \Delta^{2k}, i)} $ into $H_2(M_{b}, \partial M_{b})$ 
is a desired subcone contained in $\mathcal{C}$. 
\end{proof}

\begin{rem}
\label{rem_periodic}
If $F_{(x,y)}$ is a $(d+1)$-holed sphere, then 
the periodic homeomorphism $\psi: F_{(x,y)} \rightarrow F_{(x,y)}$ in Theorem~\ref{thm_positivemain}  
 is determined by the periodic braid $\rho= \sigma_1 \sigma_2 \ldots \sigma_{d-2} \sigma_{d-1}^2 \in B_d$. 
See the proof of Theorem~\ref{thm_positivemain}(3) in Section~\ref{subsection_claim3}. 
\end{rem}

\section{Proof of Theorem~\ref{thm_positivemain}}
\label{section_positivemain}

\begin{center}
\begin{figure}
\includegraphics[width=3.2in]{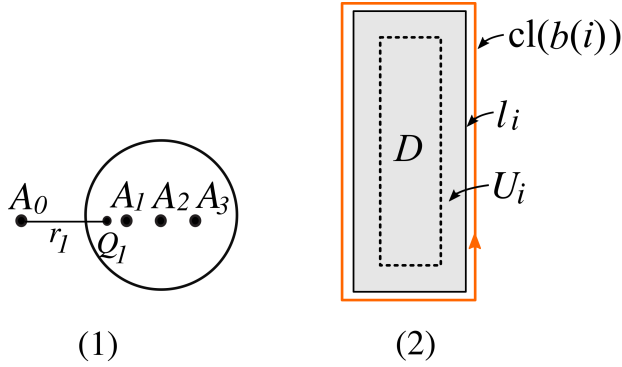} 
\caption{
(1) 
$A_0, \ldots, A_{n}$, $Q_i$, $r_i$ when $n=3$, $i=1$. 
(2) $ \partial D= \ell_i$ is a union of four segments. 
$U_i$ is an annulus in the figure.} 
\label{fig_standard}
\end{figure}
\end{center}

We fix integers $n \ge 3$ and $1 \le i \le n$. 
Throughout  Section~\ref{section_positivemain}, we assume that 
$b \in B_n$ is pseudo-Anosov and $i$-increasing with $u(b,i)= u$. 
We now choose an associated disk about the pair $(b,i)$ suitably. 
Let ${\Bbb D}$ denote the unit disk with the center $(0,0)$ in the plane ${\Bbb R}^2$. 
Let $J=  (-1,1) \times \{0\} \subset {\Bbb D}$ be the interval and let 
$A_0= (-2,0) $ be a point in ${\Bbb R}^2$.  
We denote by ${\Bbb D}_n$, the disk ${\Bbb D}$ with equally spaced $n$ points in $J$. 
Let us denote these $n$ points by $A_1, \ldots, A_n$ from left to right. 
We take a point $Q_i \ne A_i \in J$ between $A_{i-1}$ and $A_i$ 
so that the Euclidean distance $d(Q_i, A_i)$ is sufficiently small  (e.g. $d(Q_i, A_i) < \frac{1}{n+1}$). 
Let $r_i$ denote  the closed interval in $ [-2,1] \times \{0\}$ 
with endpoints  $A_0$ and $Q_i$. (Figure~\ref{fig_standard}(1).) 
We regard $b $ 
as a braid contained in the cylinder ${\Bbb D}^2 \times [0,1] \subset {\Bbb R}^3$  
and $b$ is based at $n$ points $A_1 \times \{0\}, \ldots, A_n \times \{0\}$. 
Since $\pi_b(i)= i$,  
one can take a representative of $b$ such that 
$b(i)$ is an interval in the cylinder: 
\begin{enumerate}
\item[$\diamondsuit 1$.] 
$b(i)= \displaystyle\bigcup_{0 \le t \le 1} A_i \times \{t\}$.
\end{enumerate}
Furthermore we may assume that  
$\partial D (= \ell_i)$ of an associated disk $D$ of  $(b,i)$
 is a union of the following four segments as a set (Figure~\ref{fig_standard}): 
 \begin{enumerate}
\item[$\diamondsuit 2$.] 
$\bigl(\displaystyle \bigcup_{-1 \le t \le 2} A_0 \times \{t\}\bigr) \cup \bigl( r_i \times \{-1\}\bigr)
 \cup \bigl( \displaystyle\bigcup_{-1 \le t \le 2} Q_i \times \{t\}\bigr) \cup \bigl( r_i \times \{2\}\bigr)$.
\end{enumerate}
Preserving $\diamondsuit 1,2$ 
we may further assume the following  (Figures~\ref{fig_standard}(2), \ref{fig_local}(1)): 
\begin{enumerate}
\item[$\diamondsuit 3$.] 
For a regular neighborhood $U_i$ of $\ell_i$ in $D$, 
we have $I(b,i) \subset U_i$.
\end{enumerate}
This is because 
every point $x \in D \cap K'$, 
where $K'$ is a component of $\mathrm{cl}(b- b(i))$, 
one can  slide $x$ along $K'$ so that 
the resulting point on $K'$ is in $U_i$. 
Said differently, 
preserving  $\partial D$ pointwise, 
we can modify a small neighborhood of $D$ near $K'$  
so that the resulting associated disk satisfies $\diamondsuit 3$. 

Under the conditions $\diamondsuit 1,2,3$  we have the following. 
For each $x \in D \cap K' \subset U_i$, 
there is a segment $s' \subset K'$ 
through  $x$  
such that $s'$ passes over  $b(i)$ 
since $b$ is $i$-increasing. 
See Figure~\ref{fig_local}(1). 
Such a local picture of $\mathrm{cl}(b)$ is used in the the next section. 
 Hereafter we  assume that  associated disks possess conditions 
 $\diamondsuit 1,2,3$.

\subsection{Proof of Theorem~\ref{thm_positivemain}(1)}
\label{subsection_claim1}

Let $s$ be the open  segment in $H_2( M_b, \partial M_b; {\Bbb R})$ 
with the endpoints 
$ \tfrac{n-1}{u}[E_{(b,i)}] = (0, \tfrac{n-1}{u})$ and $[F_b] = (1,0)$: 
\begin{equation}
\label{equation_s}
s = \{(x,y) \in C_{(b,i)} \ |\ y = -\dfrac{n-1}{u}x + \dfrac{n-1}{u}, \ 0 < x < 1\}.
\end{equation}
The ray of each point in $ C_{(b,i)}$ through the origin intersects with $s$.  
Thus for the proof of  (1), it suffices to prove that $s \subset \mathcal{C}$.

\begin{center}
\begin{figure}
\includegraphics[width=4in]{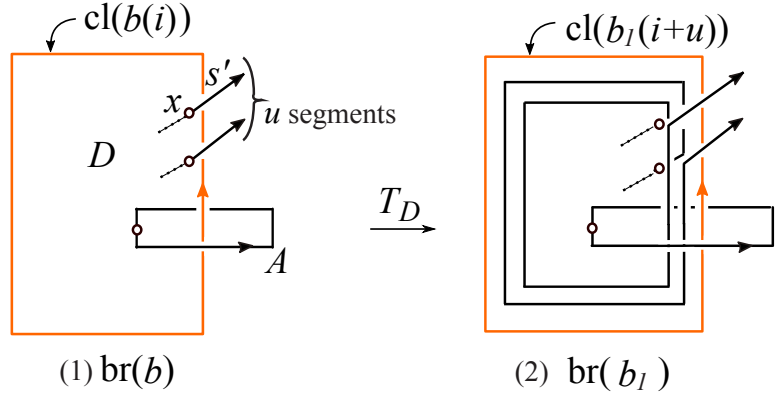} 
\caption{
Case: $b$ is $i$-increasing. 
(1) Associated disk $D$ with conditions $\diamondsuit$ 1,2,3. 
(2) $\mathrm{br}(b_1)$.
Circles $\circ$ indicate points of intersection  between $D$ and components of $\mathrm{br}(b-b(i))$. 
See  also Figure~\ref{fig_3chain}.} 
\label{fig_local}
\end{figure}
\end{center}

\begin{center}
\begin{figure}
\includegraphics[width=3.9in]{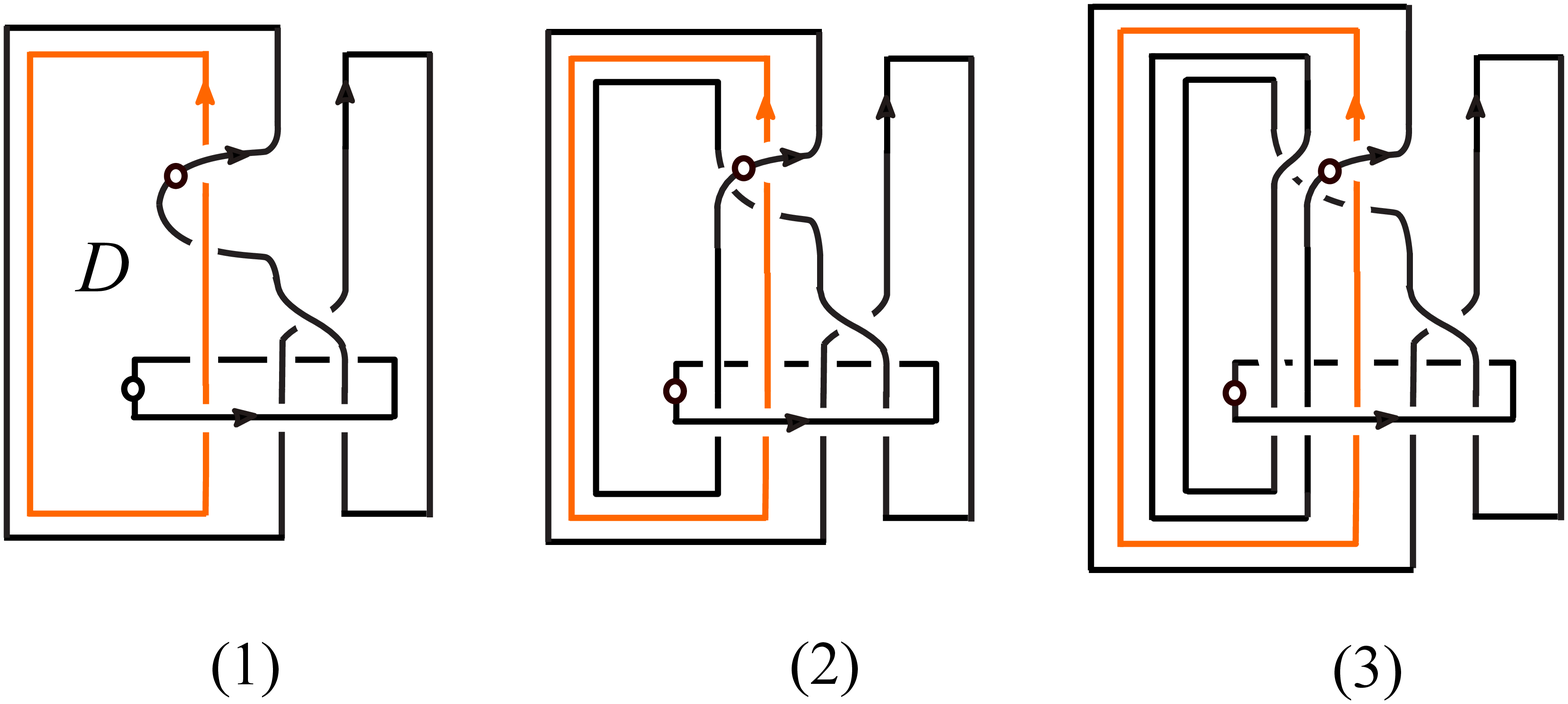} 
\caption{
Braided links for 
(1) $1$-increasing  $ \sigma_1^2 \sigma_2^{-1}$, 
(2) $2$-increasing  $(\sigma_1^2 \sigma_2^{-1})_1$ and 
(3) $3$-increasing  $(\sigma_1^2 \sigma_2^{-1})_2$.} 
\label{fig_3chain}
\end{figure}
\end{center}

We now introduce a sequence of braided links $\{\mathrm{br}(b_p)\}_{p=1}^{\infty}$ 
from an  $i$-increasing braid $b \in B_n$ 
such that  $M_{b_p} \simeq M_b$  for each $p \ge 1$. 
(We use the $1$-increasing braid 
$\sigma_1^2 \sigma_2^{-1} \in B_3 $ to illustrate the idea.) 
Let $D$ be an associated disk of the pair $(b,i)$. 
We take a disk twist 
$$T_D: \mathcal{E}(\mathrm{cl}(b(i))) \rightarrow \mathcal{E}(\mathrm{cl}(b(i)))$$  
so that the point of intersection $D \cap A$ becomes the center of the twisting about $D$, i.e. 
$T_D(D \cap A) = D \cap A$. 
We may assume that $T_D(A)= A$ as a set. 
Figure~\ref{fig_local} illustrates the image of the segment $s'$ under $T_D$. 
The condition $\diamondsuit 3$ ensures that 
$T_D$ equals the identity map outside a neighborhood of $U_i$ in $\mathcal{E}(\mathrm{cl}(b(i)))$. 
Then by $\diamondsuit 1,2$, it follows that 
$$T_D(\mathrm{br}(b - b(i))) \cup \mathrm{cl}(b(i))$$ 
is a braided link 
of some $(i+u)$-increasing braid with $(n+u)$ strands. 
We define $b_1 \in B_{n+u}$ to be such a braid. 
The trivial knot $T_D(A)(=A)$ becomes a braid axis of $b_1$. 
By definition of the disk twist, we have $M_{b_1} \simeq M_b$. 
See Figure~\ref{fig_3chain} for $\mathrm{br}((\sigma_1^2 \sigma_2^{-1})_1)$.

As discussed below, there is some ambiguity in defining  $b_1$. 
As we will see, the ambiguity is irrelevant 
for the study of pseudo-Anosov monodromies defined on fibers of fibrations on the mapping torus. 
Suppose that both $D$ and $D'$ are the associated disks of the pair $(b,i)$ 
with conditions $\diamondsuit 1,2,3$. 
We consider the disk twists $T_D$ and $T_{D'}$ with the above condition, 
i.e. both $D \cap A$ and $D' \cap A$ become  the center of the twisting about $D$ and $D'$ respectively.  
Observe that the resulting two links obtained from $D$ and $D'$  are equivalent: 
$$T_D(\mathrm{br}(b - b(i))) \cup \mathrm{cl}(b(i)) \sim T_{D'}(\mathrm{br}(b - b(i))) \cup \mathrm{cl}(b(i)).$$
They are  braided links,  
say $\mathrm{br}(b_1)$ and $\mathrm{br}(b'_1)$ 
of some braids $b_1, b_1' \in B_{n+u}$ respectively 
with the same axis $T_D(A)= A = T_{D'}(A)$. 
This means that a more stronger claim holds: 
$$(\mathrm{br}(b_1), A) \sim (\mathrm{br}(b'_1), A).$$
Thus $b_1$ and $b_1'$ are conjugate in $B_{n+u}$ by Theorem~\ref{thm_Morton}. 
In particular both $b_1$ and $b_1'$ are pseudo-Anosov 
(since the initial braid $b$ is pseudo-Anosov and $M_b$ is hyperbolic)  
and they have the same dilatation.  

To define $b_p$ for $p  \ge 1$, we consider the $p$th power 
$$T_D^{ p}: \mathcal{E}(\mathrm{cl}(b(i))) \rightarrow \mathcal{E}(\mathrm{cl}(b(i)))$$
using the above  $T_D$. 
As in the case of $p=1$, 
$$T_D^{p}(\mathrm{br}(b - b(i))) \cup \mathrm{cl}(b(i))$$ 
is a braided link of some $(i+ pu)$-increasing braid with $(n+pu)$ strands. 
We define $b_p \in B_{n+pu}$ to be such a braid. 
Then  $M_{b_p} \simeq M_b$. 
As in the case of $p=1$, such a braid $b_p$ is well-defined up to conjugate. 
We say that $b_p$ is {\it obtained from $b$ by the disk twist}. 
Clearly $u(b_p, i+pu) = u(b,i)$ for $p \ge 1$. 
See Figure~\ref{fig_3chain}.

Let us set  $$g_p:= h_{D,p}: M_b \rightarrow  M_{b_p},$$ 
where $h_{D,p}$ is the homeomorphism in (\ref{equation_homeo}). 
We consider the isomorphism 
$${g_p}_*: H_2(M_b, \partial  M_b) \rightarrow H_2(M_{b_p}, \partial M_{b_p}) .$$

\begin{lem}
\label{lem_iso_1}
For each integer $p \ge 1$, 
 ${g_p}_*$ sends $(0,1) \in \overline{C_{(b,i)}} $ to $(0,1) \in \overline{C_{(b_p, i+pu)}}$, 
and sends  
$(1,p) \in \overline{C_{(b,i)}}$ to $(1,0) \in \overline{C_{(b_p, i+pu)}}$. 
In particular for  integers $x, y \ge 1$ with $y = xp+r$ for $0 \le r < p$, 
${g_p}_*$ sends 
$(x,y) \in \overline{C_{(b,i)}}$ to $(x,r) \in \overline{C_{(b_p, i+pu)}}$. 
\end{lem}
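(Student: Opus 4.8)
The plan is to compute the linear isomorphism ${g_p}_*$ on the two extremal generators of the closed cone $\overline{C_{(b,i)}}$ and then read off the general formula by linearity. It suffices to establish the two boundary cases ${g_p}_*[E_{(b,i)}]=[E_{(b_p,i+pu)}]$, that is ${g_p}_*(0,1)=(0,1)$, and ${g_p}_*\big([F_b]+p[E_{(b,i)}]\big)=[F_{b_p}]$, that is ${g_p}_*(1,p)=(1,0)$. Granting these, the last sentence of the lemma is pure linear algebra: one gets ${g_p}_*(1,0)={g_p}_*\big((1,p)-p(0,1)\big)=(1,0)-p(0,1)=(1,-p)$, and hence for integers $x,y\ge 1$ with $y=xp+r$, $0\le r<p$, linearity yields ${g_p}_*(x,y)=x(1,-p)+y(0,1)=(x,\,y-xp)=(x,r)$.

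For the $E$-surface I would argue directly from the geometry of the twist. The homeomorphism $g_p=h_{D,p}$ from (\ref{equation_homeo}) is induced by $T_D^p$, which is the identity outside a neighborhood of $D$ and fixes $D$ setwise, $D$ being the cutting seam of the disk twist. Since the axis $A$ meets $D$ in the single point chosen as the center of the twist, the point $D\cap A$ and the small meridian of $A$ bounding the corresponding puncture of $E_{(b,i)}$ are fixed, while the $u$ points of $I(b,i)$ are carried to the $u$ intersection points of $D$ with the twisted link, using $u(b_p,i+pu)=u(b,i)=u$. As $T_D^p$ preserves the orientation of $D$, it carries the punctured disk $E_{(b,i)}$ onto $E_{(b_p,i+pu)}$ with matching orientation, giving ${g_p}_*(0,1)=(0,1)$.

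The heart of the matter, and the step I expect to be the main obstacle, is the $F$-surface together with the exact size of the correction term. I would identify classes in $H_2(M_b,\partial M_b)$ by their boundary slopes on the torus boundary components, equivalently by their intersection numbers with the meridians of the components of $\mathrm{br}(b)$. On $\partial\mathcal{N}(A)$ nothing changes, since $A$ is the fixed center of $T_D$, so the slope data of $F_b$ there are preserved by $g_p$; the whole correction is produced on the boundary tori of $\mathrm{cl}(b-b(i))$, where the $p$-fold disk twist winds the $u$ strands through $D$ exactly $p$ full turns, so that $g_p(F_b)$ and the standard fiber $F_{b_p}$ of $M_{b_p}$ differ by precisely $p$ copies of the twisting-region surface $E_{(b_p,i+pu)}$. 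I would make this exact by a Picard--Lefschetz-type computation: on $H_2$ the disk twist acts by $a\mapsto a+(a\cdot\gamma)[E]$ for a core curve $\gamma$ dual to the $E$-surface, chosen to meet a fiber once so that $[F_b]\cdot\gamma=\pm 1$; then $T_D^p$ contributes $p$ copies of $[E]$, giving ${g_p}_*[F_b]=[F_{b_p}]-p[E_{(b_p,i+pu)}]$ and hence ${g_p}_*(1,p)=(1,0)$. The two delicate points will be fixing the sign unambiguously, which I would settle using the orientation of $A$ coming from the $i$-increasing condition ($\epsilon(b,i)=+1$), and confirming that the coefficient is exactly $p$; both reduce to a local model of the twist near $D$ under conditions $\diamondsuit 1,2,3$.
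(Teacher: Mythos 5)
Your reduction to the two generator computations plus linearity is exactly the lemma's logic, and your headline identity ${g_p}_*[F_b]=[F_{b_p}]-p[E_{(b_p,i+pu)}]$ is correct and equivalent, by linearity, to what the paper proves. But your route is genuinely different from the paper's, and your sketch of the key step contains a concrete error. The paper's proof is a two-line geometric identification with no homological computation at all: it forms the oriented sum $F_{(x,y)}=xF_b+yE_{(b,i)}$ by cut-and-paste and simply observes that $g_p$ carries $E_{(b,i)}$ to $E_{(b_p,i+pu)}$ and carries the surface $F_{(1,p)}=F_b+pE_{(b,i)}$ to the disk fiber $F_{b_p}$ of the new braided link; the class equation you aim for is the linear-algebra shadow of this. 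Your boundary-slope method is legitimate in principle (the paper does slope bookkeeping of exactly this kind in Section~\ref{section_stablefoliation}, cf.\ (\ref{equation_slope})), and it buys an explicit verification that the paper leaves to the picture, at the cost of the sign/coefficient analysis you flag.

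The error: your claim that ``on $\partial\mathcal{N}(A)$ nothing changes, since $A$ is the fixed center of $T_D$'' is false, and it contradicts your own formula. Although $T_D$ fixes $A$ setwise (even pointwise along the central strand), it rotates the normal framing of $A$ along the segment where $A$ crosses the support of the twist: in the model $(x,t)\mapsto(r_{2\pi t}(x),t)$ on $D\times[0,1]$, the differential at the fixed center rotates through a full turn, so $T_D^p$ restricted to $\mathcal{T}_{(b,A)}$ is $p$ meridional Dehn twists. This is forced by your own identity: $E_{(b_p,i+pu)}$ has a boundary component on the axis torus (the $E$-surface is punctured at $D\cap A$ as well as at the points of $I(b,i)$), so ${g_p}_*[F_b]=[F_{b_p}]-p[E_{(b_p,i+pu)}]$ means the axis slope of $g_p(F_b)$ is $(\epsilon p,1)$ in the basis $\{m_A,\ell_A\}$, not the longitude $(0,1)$; by the same token a correction also appears on $\mathcal{T}_{(b,i)}$, where the disk twist acts by Dehn twists about $\partial D=\ell_i$. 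Had you carried out the slope computation with the axis torus frozen, you would have landed on the wrong class. A second, smaller caution: the Picard--Lefschetz-type formula $a\mapsto a+(a\cdot\gamma)[E]$ is stated for a self-homeomorphism, whereas $g_p$ maps $M_b$ to $M_{b_p}$ and the two sides of your equation live in different preferred bases, so the formula needs to be interpreted as computing $g_p(F_b)$ in the basis $([F_{b_p}],[E_{(b_p,i+pu)}])$. Both issues are repairable --- track slopes on all three families of boundary tori, or bypass slopes entirely as the paper does by exhibiting $g_p(F_b+pE_{(b,i)})=F_{b_p}$ directly.
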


\begin{proof}
We 
consider the {\it oriented sum} $F_{(x,y)}:= x F_b+ y E_{(b,i)}$. 
This is an oriented surface embedded  in $M_b$, 
and is obtained from the cut and past construction of 
parallel $x$ copies of $F_b$ and parallel $y$ copies of $E_{(b,i)}$. 
The orientation of $F_{(x,y)}$ agrees with those of $F_b$ and $E_{(b,i)}$. 
We have $[F_{(x,y)}] = (x,y) \in C_{(b,i)}$. 
Then $g_p$ sends $E_{(b,i)}$ to $E_{(b_p, i+pu)}$, 
and 
sends 
$F_{(1,p)}$ to $F_{b_p}$. 
Thus 
 ${g_p}_*$ sends $(0,1)  $ to $(0,1)$, and sends  $(1,p) $ to $(1,0) $. 
This completes the proof. 
\end{proof}

By the proof of Lemma~\ref{lem_iso_1}, 
$g_1$ sends 
$F_{(1,1)}=F_b+E_{(b,i)}$ to the fiber $F_{b_1}$ of a fibration on $M_b$ 
associated with $(1,1) \in C_{(b,i)}$. 
Since  the fibers $F_{(1,1)}$ and $F_b$ are  norm-minimizing, 
$E_{(b,i)} $ is also norm-minimizing.

\begin{proof}[Proof of Theorem~\ref{thm_positivemain}(1)]
We have $\|[F_b]\|= n-1$ and $\|[F_{b_p}]\| = n+ pu-1$ since $F_b$ and $F_{b_p}$ are fibers, 
and 
$\|[E_{(b,i)}]\|= u$ since $E_{(b,i)}$ is norm-minimizing. 
By Lemma~\ref{lem_iso_1}, 
$[F_{b_p}] =  (1,p) \in C_{(b,i)} $. 
Consider the rational class
$$c_p:= \dfrac{n-1}{n+pu-1}[F_{b_p}] = \Bigl(\dfrac{n-1}{n+pu-1},   \dfrac{p(n-1)}{n+pu-1}\Bigr).$$
Then $ \|c_p\|= n-1$ for $p \ge 1$. 
The ray of $[F_{b_p}]$ through the origin  is contained in some fibered cone for each $p \ge 1$. 
We easily check that $c_p$ lies on $s$ in (\ref{equation_s}). 
This means that three classes $[F_b]$, $c_p$ and $c_{p+1}$ with the same Thurston norm 
 are contained in $\mathcal{C}$. 
 Observe that the small segment $s'$ in $s$ connecting $[F_b]$ and $c_{p+1}$ contains $c_p$, 
 and  $s' \subset \mathcal{C}$ 
since  $\|\cdot \|$ is linear on each fibered cone. 
Moreover $c_p \to (0,  \tfrac{n-1}{u}) \in \partial s \subset \partial \mathcal{C}_{(b,i)}$ as $p \to \infty$. 
Putting all things together, 
we conclude that $s \subset \mathcal{C}$. 
This completes the proof. 
\end{proof}

\begin{rem}
\label{rem_e}
From the proof of Theorem~\ref{thm_positivemain}(1), 
one sees  the following: 
If $[E_{(b,i)}] \in \overline{C_{(b,i)}}$ is a fibered class, 
then 
$[E_{(b,i)}] \in \mathcal{C}$. 
Otherwise 
$[E_{(b,i)}] \in \partial{\mathcal{C}}$. 
See Figure~\ref{fig_levelset}(2)(3). 
\end{rem}

\subsection{Proof of Theorem~\ref{thm_positivemain}(2)}
\label{subsection_claim2}

We start with a simple observation: 
$\Delta^{2} \in B_n$ is $j$-increasing 
for each $1 \le j \le n$, 
and $u(\Delta^{2}, j) = n-1$ holds. 
The following lemma is immediate. 

\begin{lem}
\label{lem_bdelta}
If $b \in B_n$ is $i$-increasing, 
then $b \Delta^{2} \in B_n$ is $i$-increasing with 
$u(b  \Delta^{2}, i) = u(b,i)+ n-1$. 
\end{lem}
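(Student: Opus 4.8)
The plan is to prove Lemma~\ref{lem_bdelta} by tracking how multiplication by the full twist $\Delta^2$ affects the intersection data of the associated disk $D=D_{(b,i)}$. First I would recall the two ingredients already in hand: $b$ is $i$-increasing, so there is an associated disk $D$ with conditions $(D1)$ and $(D2)$, every point of $D\cap\mathrm{cl}(b-b(i))$ having sign $+1$; and, from the simple observation preceding the lemma, $\Delta^2\in B_n$ is itself $j$-increasing for every $j$ with $u(\Delta^2,j)=n-1$. The key point is that $\pi_{b\Delta^2}=\pi_b$ (since $\Delta^2$ is pure), so $\pi_{b\Delta^2}(i)=i$ and the $i$th strand of $b\Delta^2$ is again a well-defined fixed strand; thus the notion of an associated disk for the pair $(b\Delta^2,i)$ makes sense and I can use the \emph{same} disk $D$ up to isotopy.

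Next I would argue geometrically that stacking the full twist on top of $b$ adds exactly $n-1$ positive intersection points to $D$. Concretely, view $b\Delta^2$ as the braid $b$ followed by a full twist in the cylinder ${\Bbb D}^2\times[0,1]$; the strand $b(i)$ persists as the fixed vertical strand (condition $\diamondsuit 1$), and the associated disk $D$ can be taken to agree with the disk for $b$ in the $b$-part and with the disk for $\Delta^2$ in the $\Delta^2$-part. Since $\Delta^2$ is $i$-increasing with $u(\Delta^2,i)=n-1$, each of the other $n-1$ strands crosses $D$ exactly once in the full-twist block, and every such crossing has sign $+1$; these are disjoint from the $u(b,i)$ positive crossings coming from the $b$-block. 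Hence all crossings remain transverse and positively signed, so conditions $(D1)$ and $(D2)$ hold for $(b\Delta^2,i)$ with sign $\epsilon(b\Delta^2,i)=+1$, confirming $b\Delta^2$ is $i$-increasing.

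Finally the intersection number is additive: $I(b\Delta^2,i)=D\cap\mathrm{cl}(b\Delta^2-b(i))$ is the disjoint union of the $u(b,i)$ points contributed by $b$ and the $n-1$ points contributed by the full twist, giving $u(b\Delta^2,i)=u(b,i)+(n-1)$, which is the claimed formula. I expect the only real subtlety to be the bookkeeping for the disk $D$ in the concatenated picture: one must check that the regular-neighborhood condition $\diamondsuit 3$ and the positivity of signs are genuinely preserved when the two blocks are glued, rather than producing spurious cancelling intersections. Since $\Delta^2$ is a positive braid and $b$ is $i$-increasing, no negative crossings are introduced, so the additivity of both the sign structure and the count of intersection points follows without any cancellation, and the lemma is indeed immediate once this compatibility of disks is spelled out.
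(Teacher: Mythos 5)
Your proposal is correct and follows exactly the route the paper intends: the paper states the lemma as ``immediate'' right after the observation that $\Delta^{2}\in B_n$ is $j$-increasing with $u(\Delta^{2},j)=n-1$, and your argument is precisely the natural expansion of that remark --- reuse the disk $D$ for $(b,i)$ (legitimate since $\Delta^2$ is pure, so $\pi_{b\Delta^2}(i)=i$), note that in the full-twist block each of the other $n-1$ strands crosses $D$ once with sign $+1$, and conclude additivity of the positively-signed intersection count with no cancellation because $\Delta^2$ is positive. Your care about gluing the disk across the two blocks and preserving $\diamondsuit 3$ is exactly the bookkeeping the paper suppresses, so there is nothing to correct.
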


We explain the idea of  Theorem~\ref{thm_positivemain}(2). 
Let $D$ be the associated disk of the pair $(b,i)$. 
We have two types of  the disk twist. 
One is 
$T_{D_A}^k: \mathcal{E}(A) \rightarrow \mathcal{E}(A) $ which appears  in the proof of Theorem~\ref{thm_main} 
in Section~\ref{section_imonotonic} 
and the other is  $T_D^p: \mathcal{E}(\mathrm{cl}(b(i))) \rightarrow  \mathcal{E}(\mathrm{cl}(b(i)))$. 
If $k$ and $p$ are positive, then we obtain the $i$-increasing $b\Delta^{2k}$ from the former type $T_{D_A}^k$, 
and another increasing braid $b_p$ from the latter type $T_D^p$. 
Since both resulting braids are increasing, 
we can further apply  two types of the disk twist for the resulting braid. 
This is a key of the proof. 
Choosing two types of the disk twist alternatively, 
we get a sequence of increasing and pseudo-Anosov braids 
(since the initial braid $b$ is pseudo-Anosov).  
We shall see that the desired monodromies 
associated with primitive classes in $ C_{(b,i)}$ 
are given by these braids. 
\medskip

Let $p_1, \ldots, p_j$ be integers such that 
$p_1 \ge 0$ and $p_2, \ldots, p_j \ge 1$. 
Given an $i$-increasing braid $b \in B_n$ with $u(b,i)= u$, 
we define an integer $i[p_1, \ldots, p_j] \ge 1$ and an 
$i[p_1, \ldots,  p_j]$-increasing braid $b[p_1, \ldots, p_j]$ inductively as follows. 
\begin{itemize}
\item 
If $j=1$ and $p_1=0$, then $i[0]=i$ and $b[0]= b$. 
If $j=1$ and $p_1= p \ge 1$, then $i[p]= i+ pu$ and 
$b[p] = b_{p}$. 

\item 
If $j>1$ is even, then 
\begin{eqnarray*}
i[p_1, \ldots, p_{j-1}, p_j] &=& i[p_1, \ldots, p_{j-1}], 
\\
b[p_1, \ldots, p_{j-1}, p_j] &=& \bigl(b[p_1, \ldots, p_{j-1}]\bigr) \Delta^{2  p_j}. 
\end{eqnarray*}
The right-hand side is $ i[p_1, \ldots, p_{j-1}]$-increasing by  Lemma~\ref{lem_bdelta}.

\item
If $j>1$ is odd, then 
\begin{eqnarray*}
i[p_1, \ldots, p_{j-1}, p_j]  &=& i[p_1, \ldots, p_{j-1}]+ p_j u \bigl(b[p_1, \ldots, p_{j-1}], i[p_1, \ldots, p_{j-1}]\bigr), 
\\
b[p_1, \ldots, p_{j-1}, p_j] &=& \bigl(b[p_1, \ldots, p_{j-1}]\bigr)_{p_j}. 
\end{eqnarray*}
\end{itemize}
We say that $b[p_1, \ldots, p_j]$ has {\it length} $j$. 

\begin{ex} 
\label{ex_sequence}
\ 
\begin{enumerate}

\item[(1)] 
$b[p]= b_p$ by definition.

\item[(2)] 
Let $\beta= b \Delta^{2}$. 
Then $b[0,1]= \beta$ and 
$b[0,1,p]= \beta_p$.

\item[(3)]  
We have 
$b[0,p]=  b \Delta^{2  p}$ and 
$b[0,p,1]= (b\Delta^{2 p} )_1$, 
where $(b\Delta^{2 p} )_1$ is obtained from  $i$-increasing  $b\Delta^{2 p}$ 
by the disk twist. 
\end{enumerate}
\end{ex}

For each $k \ge 1$, 
let $f_k: M_b \rightarrow M_{b\Delta^{2  k}}$ 
be the homeomorphism which in the proof of Theorem~\ref{thm_main}. 
Consider the isomorphism 
${f_k}_*: H_2(M_b, \partial M_b) 
\rightarrow H_2(M_{b\Delta^{2  k}}, \partial M_{b\Delta^{2 k}}) $. 
We have the following property.

\begin{lem}
\label{lem_iso_2} 
For each integer $k \ge 1$, ${f_k}_*$ sends 
$(1,0) \in \overline{C_{(b,i)}}$ to $(1,0) \in \overline{C_{(b\Delta^{2  k},i)}}$, 
and sends 
$(k,1) \in \overline{C_{(b,i)}}$ to $(0,1) \in \overline{C_{(b\Delta^{2  k},i)}}$. 
In particular for integers $x,y \ge 1$ with $x= yk+r$ for $0 \le r <k$, 
then ${f_k}_*$ sends 
$(x,y) \in \overline{C_{(b,i)}}$ to $(r,y) \in \overline{C_{(b\Delta^{2  k},i)}}$. 
\end{lem}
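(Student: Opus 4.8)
The plan is to prove Lemma~\ref{lem_iso_2} in direct analogy with the proof of Lemma~\ref{lem_iso_1}, exploiting the symmetry between the two types of disk twist. Recall that $f_k = h_{D_A,k}$ is the homeomorphism from~(\ref{equation_homeo}) induced by the $k$th power of the disk twist $T_{D_A}^k$ about the disk $D_A$ bounded by the longitude of $\mathcal{N}(A)$. The key observation, already recorded in the proof of Theorem~\ref{thm_main}, is that ${f_k}_*$ sends $[F_b]$ to $[F_{b\Delta^{2k}}]$; since $[F_b] = (1,0) \in \overline{C_{(b,i)}}$ and $[F_{b\Delta^{2k}}] = (1,0) \in \overline{C_{(b\Delta^{2k},i)}}$, this gives the first assertion immediately.

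For the second assertion I would track the $E$-surface under $f_k$. The disk twist $T_{D_A}$ twists along the disk $D_A$ bounded by the braid axis, and the axis $A$ meets the $E$-surface $E_{(b,i)}$ transversally; the roles of $F_b$ and $E_{(b,i)}$ here are exactly dual to the roles played in Lemma~\ref{lem_iso_1}, where the disk twist $T_D$ (about the disk bounded by $\mathrm{cl}(b(i))$) fixed $E_{(b,i)}$ and sent $F_{(1,p)}$ to $F_{b_p}$. Here the analogous statement is that ${f_k}_*$ fixes the class $[F_b] = (1,0)$ while carrying the oriented sum $k F_b + 1\cdot E_{(b,i)} = F_{(k,1)}$ to $E_{(b\Delta^{2k},i)} = (0,1)$. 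I would verify this by the same oriented-sum (cut-and-paste) analysis: $T_{D_A}^k$ adds $k$ full twists near $D_A$, which on the level of surfaces is precisely the operation of forming the oriented sum of the $E$-surface with $k$ parallel copies of $F_b$, so $f_k$ carries $F_{(k,1)}$ onto $E_{(b\Delta^{2k},i)}$.

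The general formula then follows by linearity of ${f_k}_*$. Writing $x = yk + r$ with $0 \le r < k$, one decomposes
$$(x,y) = r\,(1,0) + y\,(k,1),$$
and applying the two established images $(1,0) \mapsto (1,0)$ and $(k,1) \mapsto (0,1)$ gives
$$(x,y) \longmapsto r\,(1,0) + y\,(0,1) = (r,y),$$
as claimed.

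The main obstacle I anticipate is the bookkeeping in the second assertion: one must check carefully that the disk twist $T_{D_A}$, which is a \emph{global} twist about the braid axis rather than about an individual strand, interacts with the $E$-surface so that $F_{(k,1)}$ is sent exactly to the $E$-surface of the twisted braid (and not merely to a class differing by a multiple of $[F_{b\Delta^{2k}}]$). Pinning down the precise image requires understanding how $E_{(b,i)}$ sits relative to $D_A$ and how many full twists are introduced; the condition $\diamondsuit 3$ and the genus-$0$ control from Theorem~\ref{thm_positivemain}(2) should make this tractable, but the orientation conventions and the counting of intersection points need to be handled with care to rule out an off-by-a-column error in the homology computation.
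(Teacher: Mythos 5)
Your proposal is correct and takes essentially the same route as the paper: the paper's proof consists precisely of the two facts you establish, namely that $f_k$ sends $F_b$ to $F_{b\Delta^{2k}}$ and sends the oriented sum $F_{(k,1)} = kF_b + E_{(b,i)}$ to $E_{(b\Delta^{2k},i)}$, with the general case $(x,y) \mapsto (r,y)$ then following by linearity exactly as in your decomposition $(x,y) = r\,(1,0) + y\,(k,1)$. The bookkeeping concern you raise is resolved by the same cut-and-paste picture dual to Lemma~\ref{lem_iso_1}, which is all the paper invokes.
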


\begin{proof}
The homeomorphism $f_k$ sends $F_b$ to $F_{b \Delta^{2 k}}$, 
and sends 
$F_{(k,1)}= kF_b + E_{(b,i)}$ to $E_{(b\Delta^{2  k}, i)}$. 
This implies that the claim holds. 
\end{proof}

\begin{proof}[Proof of Theorem~\ref{thm_positivemain}(2)]

Let $(x,y)   \in C_{(b,i)}$ be a primitive integral class. 
(Hence $x,y $ are positive integers with $\gcd(x,y)=1$.) 
We consider the continued fraction of  $y/x$ 
by the Euclidean algorithm 
$$\frac{y}{x}=
p_1+ \cfrac{1}{p_2 + 
           \cfrac{1}{p_3+ \cdots+
           \cfrac{1}{p_{j-1}+ 
           \cfrac{1}{p_{j}}}}}
:= p_1 + \frac{1}{p_2}{\genfrac{}{}{0pt}{}{}{+}}
          \frac{1}{p_3}{\genfrac{}{}{0pt}{}{}{+ \cdots +}}
           \frac{1}{p_{j-1}}{\genfrac{}{}{0pt}{}{}{+}}
           \frac{1}{p_{j}}{\genfrac{}{}{0pt}{}{}{}}
$$
with length $j$ and $p_j \ge 2$ and $p_1= 0$ if $0 < y< x$.  
There is another expression 
$$\frac{y}{x}= 
p_1 + \frac{1}{p_2}{\genfrac{}{}{0pt}{}{}{+}}
           \frac{1}{p_3}{\genfrac{}{}{0pt}{}{}{+ \cdots +}}
           \frac{1}{p_{j-1}}{\genfrac{}{}{0pt}{}{}{+}}
           \frac{1}{(p_{j}-1)}{\genfrac{}{}{0pt}{}{}{+}} 
             \frac{1}{1}{\genfrac{}{}{0pt}{}{}{}}     $$
with length $j+1$.
We choose one of the two expressions  
with odd length $\ell$: 
$$\frac{y}{x}= 
p_1 + \frac{1}{p_2}{\genfrac{}{}{0pt}{}{}{+}}
           \frac{1}{p_3}{\genfrac{}{}{0pt}{}{}{+ \cdots +}}
           \frac{1}{p_{\ell-1}}{\genfrac{}{}{0pt}{}{}{+}}
           \frac{1}{p_{\ell}}{\genfrac{}{}{0pt}{}{}{}}. $$
This encodes the fiber $F_{(x,y)}$ and its monodromy $\phi_{(x,y)}$. 
In fact 
Lemmas~\ref{lem_iso_1}, \ref{lem_iso_2} ensure that 
$$(g_{p_{\ell}}  f_{p_{\ell-1}} g_{p_{\ell-2}}   \cdots f_{p_2}  g_{p_1})_*:  H_2(M_b, \partial M_b) \rightarrow 
H_2(M_{b[p_1, \ldots, p_{\ell}]}, \partial M_{b[p_1, \ldots, p_{\ell}]})$$ 
sends 
$(x,y) = [x F_b+ y E_{(b,i)}]$ to $(1,0)$ which is the integral class of the $F$-surface of $b[p_1, \ldots, p_{\ell}]$. 
($g_{p_1} = id: M_b \rightarrow M_b$ if $p_1= 0$.) 
Thus $F_{(x,y)}$ has genus $0$. 
Moreover this means that 
one can take $F_{b[p_1, \ldots, p_{\ell}]}$ as a representative of $ (x,y) \in C_ {(b,i)}$ and 
the monodromy $\phi_{(x,y)}: F_{(x,y)} \rightarrow F_{(x,y)}$ is determined by 
$ b[p_1, \ldots, p_{\ell}]$. 
This completes  the proof. 
\end{proof}

We denote by $b_{(x,y)}$ the braid $b[p_1, \ldots, p_{\ell}]$ 
which determines $\phi_{(x,y)}$. 
Here is an example:    
If $(x,y)= (5,14)$, then 
$\frac{14}{5}= 
2 + \frac{1}{1}{\genfrac{}{}{0pt}{}{}{+}}
           \frac{1}{4}{\genfrac{}{}{0pt}{}{}{}}$ 
and  $\phi_{(5,14)}$ is determined by $b_{(5,14)}= b[2,1,4]$. 
If $(x,y)= (14,5)$, then 
$\frac{5}{14}=
0 + \frac{1}{2}{\genfrac{}{}{0pt}{}{}{+}}
\frac{1}{1}{\genfrac{}{}{0pt}{}{}{+}}
           \frac{1}{3}{\genfrac{}{}{0pt}{}{}{+}}
           \frac{1}{1}{\genfrac{}{}{0pt}{}{}{}}$      
and 
$\phi_{(14,5)}$ is determined by $b_{(14,5)}= b[0,2,1,3,1]$. 
%

\subsection{Proof of Theorem~\ref{thm_positivemain}(3)}
\label{subsection_claim3}

We begin with the following lemma.

\begin{lem}[Standard form]
\label{lem_normalform}
If $b \in B_n$ is $i$-increasing with $u(b,i)= u$, then 
$b$ is conjugate to an $n$-increasing braid $b'$  of the form 
$$b'= (w_1 \sigma_{n-1}^2)  \cdots (w_u \sigma_{n-1}^2),$$
where each $w_k$ is a word of $\sigma_1^{\pm 1}, \ldots, \sigma_ {n-2}^{\pm 1}$, 
but not $\sigma_{n-1}^{\pm 1}$, 
possibly $w_k = \emptyset$ for some $k$.
\end{lem}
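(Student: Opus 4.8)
The plan is to use the $i$-increasing structure to produce a disk $D=D_{(b,i)}$ satisfying $\diamondsuit 1,2,3$, and then read off an explicit braid word from the way the strands of $\mathrm{cl}(b-b(i))$ cross over the strand $b(i)$ near $D$. The key idea is that conjugating $b$ so that the distinguished strand $b(i)$ becomes the $n$th (rightmost) strand puts all $u=u(b,i)$ intersection points of $D$ with $\mathrm{cl}(b-b(i))$ into the regular neighborhood $U_i$ of $\ell_i$, where by the $i$-increasing condition each intersecting segment $s'$ passes \emph{over} $b(i)$ (see Figure~\ref{fig_local}(1)).

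First I would conjugate $b$ by a suitable braid carrying the $i$th puncture to the $n$th position; since conjugation preserves the Nielsen--Thurston type and, by the discussion around Theorem~\ref{thm_Morton}, the braided link up to equivalence, this yields an $n$-increasing braid $b'$ with $u(b',n)=u(b,i)=u$ and the distinguished strand $b'(n)$ realized as the vertical interval $\bigcup_{0\le t\le 1}A_n\times\{t\}$ as in $\diamondsuit 1$. Next I would track the strand $b'(n)$ through the braid diagram and record, at each height where a strand of $\mathrm{cl}(b'-b'(n))$ crosses over $b'(n)$, the corresponding generator. Because $b'$ is $n$-increasing, every such crossing has the same sign, and the local picture near $U_i$ shows that each of the $u$ intersection points contributes a factor of $\sigma_{n-1}^2$ (the right-handed full twist linking the $n$th strand once, matching $\rho=\delta_{n-1}\sigma_{n-1}^2$ combinatorics), while the remaining crossings among the first $n-1$ strands assemble into words $w_k$ in $\sigma_1^{\pm1},\ldots,\sigma_{n-2}^{\pm1}$.

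The bookkeeping then gives $b'=(w_1\sigma_{n-1}^2)\cdots(w_u\sigma_{n-1}^2)$ after an isotopy of the diagram that slides each over-crossing of $b'(n)$ into a standard position; the $\sigma_{n-1}^2$ factors are separated by the blocks $w_k$ coming from the portion of the braid between consecutive intersection points along the longitude $\ell_i$. Here I would use that $b'(n)$ never participates as an under-crossing (which is exactly the content of every $s'$ passing over $b(i)$), so no $\sigma_{n-1}^{-1}$ appears and the occurrences of $\sigma_{n-1}$ come only in the twist factors $\sigma_{n-1}^2$.

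The main obstacle I anticipate is making precise the claim that the $u$ intersection points can be separated so cleanly along $\ell_i$ that the word factors exactly as $u$ blocks each ending in a single $\sigma_{n-1}^2$, rather than in some more tangled interleaving. This requires a careful isotopy argument using condition $\diamondsuit 3$ (that $I(b,i)\subset U_i$) to arrange the over-crossings in a linear order, together with the braid relations to absorb any $\sigma_{n-1}^{\pm1}$ that might appear between twist factors into the $w_k$ or into adjacent $\sigma_{n-1}^2$ blocks. Once the strands crossing $b'(n)$ are combed into monotone order along the disk, the factorization should follow by reading the diagram from bottom to top.
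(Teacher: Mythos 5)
Your proposal is correct and takes essentially the same route as the paper: the paper's proof likewise conjugates by $\sigma=\sigma_{n-1}\sigma_{n-2}\cdots\sigma_i$ to move the fixed strand to the $n$th position, pulls the straightened strand $b'(n)$ tight, and reads off the word, with each of the $u$ positive intersection points of the associated disk contributing one full-twist factor $\sigma_{n-1}^2$ and the intervening braiding among the first $n-1$ strands forming the words $w_k$. The combing step you flag as the main obstacle is exactly what the paper dispatches informally (``we pull $b'(n)$ tight \ldots\ then a representative of $b'$ is of the desired form''), so your proposal is, if anything, more detailed than the paper's own argument; just note that the absence of stray $\sigma_{n-1}^{\pm 1}$ letters is best justified by the positivity of every wrap (each intersection with $D$ has sign $+1$, so each passage around $b'(n)$ is a positive full twist) rather than by $b'(n)$ never being an under-strand, since in $\sigma_{n-1}^2$ the strand $b'(n)$ is the over-strand in one of the two crossings.
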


Figure~\ref{fig_nincreasing}(1) shows the form of $b'$ in Lemma~\ref{lem_normalform} 
in case $u=2$.

\begin{proof}
We regard $b$ as a braid in  ${\Bbb D} \times [0,1]$. 
By $\diamondsuit 1$, $b(i)$ is an interval in ${\Bbb D} \times [0,1]$. 
If $i=n$, then 
$b$ is $n$-increasing and it  is not hard to see that 
a representative of $b$ is of the desired form in Lemma~\ref{lem_normalform}. 
Suppose that $b$ is $i$-increasing for $1 \le i < n$. 
We set $\sigma= \sigma_{n-1} \sigma_{n-2} \cdots \sigma_i$ 
if $1 \le i < n-1$ 
and $\sigma= \sigma_{n-1}$ if $i = n-1$. 
We consider the $n$-braid $b'= \sigma b \sigma^{-1}$ 
which is $n$-increasing with $u(b',n)= u$. 
We pull $b'(n)$ tight in ${\Bbb D} \times [0,1]$ 
and make it straight. 
Then a representative of $b'$ is of the desired form. 
\end{proof}

\begin{center}
\begin{figure}
\includegraphics[width=3in]{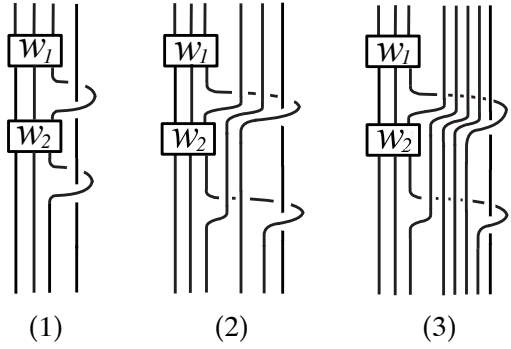} 
\caption{The figure illustrates 
how an initial braid $b$ generates  $\{b_p\}$. 
(1) $b=  w_1 \sigma_{3}^2 w_2 \sigma_{3}^2 = 
(\nu_1 \rho) (\nu_2 \rho) \in B_4$, 
where $\nu_j= w_j (\sigma_1 \sigma_2)^{-1}$.  
(2) $b_1 = (\nu_1 \rho) (\nu_2 \rho) \in B_6$.  
(3) $b_2 =(\nu_1 \rho) (\nu_2 \rho) \in B_8$.}
\label{fig_nincreasing}
\end{figure}
\end{center}

\begin{proof}[Proof of Theorem~\ref{thm_positivemain}(3)]
Since each $i$-increasing braid is conjugate to an $n$-increasing braid of a standard form 
in Lemma~\ref{lem_normalform}, 
we may assume that 
$b \in B_n$ is an $n$-increasing braid of the form 
$b= (w_1 \sigma_{n-1}^2) \cdots (w_u \sigma_{n-1}^2)$. 
Since $\rho \in B_n$ is the periodic braid such that 
$\rho= \sigma_1 \sigma_2 \cdots \sigma_{n-2} \sigma_{n-1}^2$ 
we have $\sigma_{n-1}^2 = (\sigma_1 \cdots \sigma_{n-2})^{-1} \rho$. 
Then $b$ is expressed as follows. 
$$b=  (\nu_1 \rho) \cdots (\nu_u \rho),$$
where $\nu_i= w_i (\sigma_1 \cdots \sigma_{n-2})^{-1}$ is 
written by a word of $\sigma_1^{\pm 1}, \cdots, \sigma_{n-2}^{\pm 1}$, 
but not $\sigma_{n-1}^{\pm 1}$. 
Each $\nu_j$ in $b$ is a reducible braid and $\rho$ in $b$ is the periodic braid. 
Let $\omega_j: F_b \rightarrow F_b$ denote a reducible representative whose mapping class is determined by  $\nu_j$, 
and let $\psi: F_b \rightarrow F_b$ denote a  periodic representative whose mapping class  determined by  $\rho$. 
The monodromy $\phi_b$ defined on $F_b$ is written by 
$\phi_b= (\omega_1 \psi) \cdots (\omega_u \psi) $.

Recall that ${\Bbb D}_{n-1}$ is the disk ${\Bbb D}$ with marked points $A_1, \cdots, A_{n-1}$. 
Let $S_0$ be an $n$-holed sphere obtained from ${\Bbb D}_{n-1}$ 
by removing the interiors of small $(n-1)$ disks with centers $A_1, \cdots, A_{n-1}$. 
Each $\nu_j$ as an $(n-1)$-braid determines a homeomorphism 
$\widehat{\omega}_j: S_0 \rightarrow S_0$. 
We may assume that 
$\widehat{\omega}_j$ fixes one of the boundary components 
corresponding to $\partial {\Bbb D}$  pointwise. 
It is clear that we have an embedding 
$h: S_0 \hookrightarrow F_b$ 
such that each $\omega_j$ in $\phi_b$ is reducible supported on the subsurface $h(S_0)$ 
and the restriction of $\omega_j$ to $h(S_0)$ is given by $h \circ \widehat{\omega}_j \circ h^{-1}$.

By the proof of Theorem~\ref{thm_positivemain}(2), 
$\phi_{(x,y)}: F_{(x,y)} \rightarrow F_ {(x,y)}$ 
associated with each primitive class $(x,y) \in C_{(b,i)}$ 
is determined by the braid of the form $b[p_1, \ldots, p_{\ell}]$. 
We now prove by the induction on  length $\ell$ that 
$$b[p_1, \ldots, p_{\ell}]=(\nu_1 \rho) \cdots ( \nu_{u-1} \rho) (\nu_u \rho) \rho^{m-1} 
=(\nu_1 \rho) \cdots ( \nu_{u-1} \rho) (\nu_u \rho^{m})$$ 
for some $m \ge 1$ depending on $(x,y)$. 
Here each $\nu_j$ in $b[p_1, \ldots, p_{\ell}]$ 
is a reducible braid which is an extension of $\nu_j$ in  $b$ 
and $\rho$ is the periodic braid with the degree of $b[p_1, \ldots, p_j]$. 
If this holds, then $\phi_{(x,y)}$ has a desired property as in Theorem~\ref{thm_positivemain}(3).  
Suppose that $\ell=1$. 
If $p_1=0$, then $b[0]= b $ and we are done. 
If $p_1 \ge 1$, then $b[p_1]= b_{p_1}$. 
Using the above expression of $b$ 
we observe that $b_{p_1}$ is written by 
$$b_{p_1}
=  (\nu_1 \rho) \cdots (\nu_u \rho) \in B_{n+ p_1 u}$$
(see Figure~\ref{fig_nincreasing}). 
We are done.

For $\ell \ge 2$, suppose that 
$b[p_1, \ldots, p_{\ell-1}]= (\nu_1 \rho_d) \cdots  (\nu_{u-1} \rho_d) (\nu_u \rho_d^{m})$ 
for some $m$, 
where $d$ is the degree of $b[p_1, \ldots, p_{\ell-1}]$. 
Consider  $b[p_1, \ldots, p_{\ell}]$ with length $\ell$. 
If $\ell$  is even, then by induction hypothesis
$$b[p_1, \ldots, p_{\ell}] = \bigl(b[p_1, \ldots, p_{\ell-1}]\bigr)  \Delta_d^{2 p_{\ell}}
= (\nu_1 \rho_d) \cdots  (\nu_{u-1} \rho_d) (\nu_u \rho_d^{m}) \Delta_d^{2 p_{\ell}}.$$ 
Since $\Delta_d^2= \rho_d^{d-1}$ 
we have $(\nu_u \rho_d^{m}) \Delta_d^{2 p_{\ell}} = \nu_u \rho_d^{m+ p_{\ell}(d-1)}$. 
Thus $b[p_1, \ldots, p_{\ell}]$ has a desired expression and we are done. 
If $\ell$ is odd, then by induction hypothesis again 
$$b[p_1, \ldots, p_{\ell}] = \bigr(b[p_1, \ldots, p_{\ell-1}]\bigl)_{p_\ell} = \bigl( (\nu_1 \rho_d) \cdots  (\nu_{u-1} \rho_d) (\nu_u \rho_d^{m})\bigr)_{p_\ell}.$$
As in the case of $\ell=1$, the braid in the right-hand side is expressed as 
$$ \bigl( (\nu_1 \rho_d) \cdots  (\nu_{u-1} \rho_d) (\nu_u \rho_d^{m})\bigr)_{p_\ell}
=   (\nu_1 \rho_{\dag}) \cdots  (\nu_{u-1} \rho_{\dag} )(\nu_u \rho_{\dag}^m), $$
where  $\dag$ is the degree of $b[p_1, \ldots, p_{\ell}] $. 
This completes the proof. 
\end{proof}

\section{Sequences of pseudo-Anosov braids with small normalized entropies}
\label{section_sequences}

In this section we prove Theorem~\ref{thm_nomalizedentropy}. 
We begin with an observation. 
Let $\Omega \subset \{a \in \mathcal{C} \ |\ \|a\| =1\}$ be a compact set in $H_2(M_b, \partial M_b; {\Bbb R})$  
and let $\mathcal{C}_{\Omega} \subset \mathcal{C}$ denote the cone over $\Omega$ through the origin. 
By Theroem~\ref{thm_Fried_1}(2) 
there is a constant $P= P(\Omega)>0$ depending on $\Omega$ 
such that 
$\mathrm{Ent}(a)< P$ for any $a \in C_{\Omega}$.  
This observation provides us many sequences of  pseudo-Anosov braids with small normalized entropies
from a single pseudo-Anosov braid $b$.

\begin{thm}
\label{thm_compact}
Suppose that $b$ is  a pseudo-Anosov braid whose  permutation has a fixed point. 
We fix any  $0 < \ell <\infty$. 
Let $\{(x_p, y_p)\}$ be a sequence of primitive integral classes in $C_{(b,i)}$ 
such that $y_p/x_p < \ell $ and $\|(x_p, y_p)\| \asymp p$. 
Then the sequence of pseudo-Anosov braids 
$\{b_{(x_p, y_p)}\}$ has a small normalized entropy. 
\end{thm}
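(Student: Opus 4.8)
~The plan is to reduce Theorem~\ref{thm_compact} to the compactness observation that precedes its statement, by showing that the relevant classes all lie in a single compact slice of the fibered cone. First I would fix the affine slice $\{a \in \mathcal{C} \mid \|a\| = 1\}$ and consider the image of each primitive class $(x_p, y_p)$ under normalization, namely $(x_p, y_p)/\|(x_p, y_p)\|$. The hypothesis $y_p/x_p < \ell$ confines these normalized classes to the subset of the slice corresponding to rays in $C_{(b,i)}$ of bounded slope, i.e. the cone
$$\mathcal{C}_\ell := \{x[F_b] + y[E_{(b,i)}] \mid x > 0,\ 0 < y \le \ell x\} \subset C_{(b,i)}.$$

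Next I would observe that the closure of the normalized version of $\mathcal{C}_\ell$ inside the slice is compact and, crucially, is contained in $\mathcal{C}$ rather than touching its boundary $\partial\mathcal{C}$. Containment in $\mathcal{C}$ follows from Theorem~\ref{thm_positivemain}(1), which gives $C_{(b,i)} \subset \mathcal{C}$; the key point is that the bounded-slope condition $y/x \le \ell$ keeps the rays away from the boundary ray $\{(0,y)\}$ of $\overline{C_{(b,i)}}$, so the closure of the normalized slice stays in the open cone $\mathcal{C}$. Set $\Omega$ to be this compact set. By Theorem~\ref{thm_Fried_1}(2), $\mathrm{Ent}$ is continuous on $\mathcal{C}$ and constant on rays, so it is bounded on the cone $\mathcal{C}_\Omega$ over $\Omega$; hence there is $P = P(\Omega) > 0$ with $\mathrm{Ent}(a) < P$ for all $a \in \mathcal{C}_\Omega$. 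Since every $(x_p, y_p)$ lies in $\mathcal{C}_\Omega$, we get $\mathrm{Ent}(b_{(x_p,y_p)}) = \mathrm{Ent}((x_p,y_p)) < P$ uniformly in $p$.

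It then remains to check the degree condition in the definition of small normalized entropy, namely $d(b_{(x_p,y_p)}) \asymp p$. Here I would use the fact, recorded in the proof of Theorem~\ref{thm_positivemain}(2), that $b_{(x,y)}$ is a genus-$0$ monodromy whose $F$-surface is a $(d+1)$-holed sphere with $\|(x,y)\| = -\chi(F_{(x,y)}) = d - 1$; thus the number of strands $d(b_{(x_p,y_p)})$ is an affine function of $\|(x_p, y_p)\|$. Combined with the hypothesis $\|(x_p, y_p)\| \asymp p$, this yields $d(b_{(x_p,y_p)}) \asymp p$. Together with the uniform entropy bound, this verifies both requirements in the definition and completes the proof.

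The main obstacle I anticipate is the geometric verification that the closure of the normalized bounded-slope cone does not meet $\partial\mathcal{C}$. Theorem~\ref{thm_positivemain}(1) only asserts the \emph{open} inclusion $C_{(b,i)} \subset \mathcal{C}$, and one boundary ray of $\overline{C_{(b,i)}}$ (the one spanned by $[E_{(b,i)}]$) may itself lie on $\partial\mathcal{C}$ (see Remark~\ref{rem_e}). The condition $y_p/x_p < \ell$ is precisely what excludes the problematic ray, but one must argue carefully that bounding the slope strictly below no finite $\ell$-related degeneration occurs; concretely, the closed bounded-slope sector $\{0 \le y \le \ell x,\ x \ge 0\}$ meets $\overline{C_{(b,i)}}$ only in rays bounded away from the $[E_{(b,i)}]$-direction, and these all lie in the interior $\mathcal{C}$ by part~(1) together with the fact that $[F_b] \in \mathcal{C}$ is interior. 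Once this compactness-in-the-interior is established, the rest is a direct application of Fried's continuity theorem and the Euler-characteristic bookkeeping.
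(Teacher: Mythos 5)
Your proof is correct and takes essentially the same approach as the paper: the bound $y_p/x_p<\ell$ together with $C_{(b,i)}\subset\mathcal{C}$ (Theorem~\ref{thm_positivemain}(1)) and $[F_b]\in\mathcal{C}$ places the projective classes in a compact subset $\Omega$ of $\{a\in\mathcal{C}\mid\|a\|=1\}$, Theorem~\ref{thm_Fried_1}(2) then bounds $\mathrm{Ent}$ uniformly on the cone over $\Omega$, and $d(b_{(x_p,y_p)})\asymp\|(x_p,y_p)\|\asymp p$ gives the degree condition. Your explicit check that the closure of the bounded-slope sector avoids the possibly non-fibered ray through $[E_{(b,i)}]$ is exactly the point the paper's compactness observation uses implicitly.
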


\begin{proof}
If $\{(x_p, y_p)\}$ is the sequence under the assumption, 
then we have 
$d(b_{(x_p, y_p)}) \asymp \|(x_p, y_p)\| \asymp p$. 
Since $(1,0) \in C_{(b,i)} \subset \mathcal{C}$ 
and the slope of $y_p/x_p$ is bounded by $\ell$ from above,  
the set of projective classes 
$(x_p, y_p)$ is contained in some compact set in 
$\{a \in \mathcal{C} \ |\ \|a\| =1\}$ (Figure~\ref{fig_levelset}). 
Thus there is a constant $P= P(\ell)>1$ such that 
$\mathrm{Ent}(b_{(x_p, y_p)}) < P$ for any $p$.  
This completes the proof. 
\end{proof}

Let us discuss three sequences coming from Example~\ref{ex_sequence}. 
They are $\{b_p\}$, $\{\beta_p\}$ and $\{(b\Delta^{2  p} )_1\}$ varying $p$. 
It is not hard to see  that 
$d(b_p)$, $d(\beta_p)$, $d((b\Delta^{2  p} )_1) \asymp p$.

\begin{thm}
\label{thm_sequence}
For an $i$-increasing and pseudo-Anosov $b \in B_n$,  
we have the following on the sequences of pseudo-Anosov braids. 
\begin{enumerate}
\item[(1)] 
$\{b_p\}$ has a small normalized entropy 
if and only if $[E_{(b,i)}]$ is a fibered class. 

\item[(2)] 
For $\beta = b \Delta^2 \in B_n$, 
$\{\beta_p\}$  has a small normalized entropy and 
$\mathrm{Ent}(\beta_p) \to \mathrm{Ent}((1,1))$ as $p \to \infty$.

\item[(3)] 
$\{(b\Delta^{2 p} )_1\}$ has a small normalized entropy 
and $\mathrm{Ent}((b\Delta^{2 p} )_1) \to \mathrm{Ent}(b)$ 
as $p \to \infty$. 
\end{enumerate}
\end{thm}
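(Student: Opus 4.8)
The plan is to analyze each of the three sequences separately, translating the question of small normalized entropy into the behavior of the normalized entropy function $\mathrm{Ent}$ on the subcone $C_{(b,i)}$, using the homeomorphisms ${g_p}_*$ and ${f_k}_*$ of Lemmas~\ref{lem_iso_1}, \ref{lem_iso_2} to locate the relevant classes, and then invoking the properties of $\mathrm{Ent}$ from Theorem~\ref{thm_Fried_1}, especially part (2) (constancy on rays) and part (3) (blow-up toward $\partial \mathcal{C}$). The common thread is that $b_p$, $\beta_p$, and $(b\Delta^{2p})_1$ are all $F$-surfaces of mapping tori homeomorphic to $M_b$, so their normalized entropies equal $\mathrm{Ent}$ evaluated at the corresponding class in $C_{(b,i)}$.

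For part (2), I would first identify the class corresponding to $\beta_p = b[0,1,p]$. By Example~\ref{ex_sequence}(2) and the isomorphisms in Lemmas~\ref{lem_iso_1}, \ref{lem_iso_2}, the $F$-surface $F_{\beta_p}$ pulls back to a class on the ray through $(1,1) \in C_{(b,i)}$: the disk twist used to form $\beta = b\Delta^2$ moves $(1,0)$ toward the direction of $(1,1)$, and the subsequent power $p$ produces classes $(1, p)$ in the $\beta$-cone, which pull back to $(p, p+\text{const})$ type classes converging projectively to $(1,1)$. Since $\mathrm{Ent}$ is constant on rays and continuous, $\mathrm{Ent}(\beta_p) \to \mathrm{Ent}((1,1))$, and because $(1,1)$ lies in the interior of $\mathcal{C}$, this limit is finite; combined with $d(\beta_p) \asymp p$ this gives small normalized entropy. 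Part (3) is essentially dual: $(b\Delta^{2p})_1 = b[0,p,1]$ corresponds under the composed isomorphisms to classes converging projectively to $(1,0) = [F_b]$, an interior class of $\mathcal{C}$, so $\mathrm{Ent}((b\Delta^{2p})_1) \to \mathrm{Ent}((1,0)) = \mathrm{Ent}(b)$, again finite, yielding small normalized entropy.

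For part (1), the sequence $\{b_p\}$ corresponds to the classes $(1,p) \in C_{(b,i)}$, which projectively converge to the boundary direction $(0,1) = [E_{(b,i)}]$ of the subcone $C_{(b,i)}$ as $p \to \infty$. Here the dichotomy of Remark~\ref{rem_e} is decisive. If $[E_{(b,i)}]$ is a fibered class, then by that remark $[E_{(b,i)}] \in \mathcal{C}$ lies in the interior of the fibered cone, so the limiting direction stays away from $\partial \mathcal{C}$; by continuity of $\mathrm{Ent}$ the values $\mathrm{Ent}(b_p)$ converge to the finite value $\mathrm{Ent}((0,1))$, giving small normalized entropy. Conversely, if $[E_{(b,i)}]$ is not fibered, then Remark~\ref{rem_e} places it on $\partial \mathcal{C}$, and the classes $(1,p)$ tend to a nonzero boundary point of $\mathcal{C}$; Theorem~\ref{thm_Fried_1}(3) then forces $\mathrm{Ent}(b_p) \to \infty$, so the sequence fails to have small normalized entropy.

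The main obstacle I anticipate is bookkeeping the precise classes in $C_{(b,i)}$ that correspond to each braid sequence under the compositions of ${g_p}_*$ and ${f_k}_*$, and verifying that the projective limits of these classes are exactly the interior points $(1,1)$, $(1,0)$, or the boundary point $(0,1)$ as claimed. Once the classes are correctly located, the entropy statements follow cleanly from the continuity and ray-constancy in Theorem~\ref{thm_Fried_1}(2) together with the boundary blow-up in Theorem~\ref{thm_Fried_1}(3); the degree asymptotics $d \asymp p$ are already granted in the discussion preceding the statement. Thus the real work lies in the homological identification of the sequences with rays in $C_{(b,i)}$, after which the dichotomy in part (1) is an immediate consequence of Remark~\ref{rem_e}.
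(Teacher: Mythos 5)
Your proposal is correct and follows essentially the same route as the paper: identify $[F_{b_p}]=(1,p)$, $[F_{\beta_p}]=(p+1,p)$, and $[F_{(b\Delta^{2p})_1}]=(p+1,1)$ in $C_{(b,i)}$ via Lemmas~\ref{lem_iso_1} and \ref{lem_iso_2}, take projective limits to $(0,1)$, $(1,1)$, $(1,0)$ respectively, and apply Remark~\ref{rem_e} with Theorem~\ref{thm_Fried_1}(2)(3). Your coordinate bookkeeping in part (2) is slightly vaguer than the paper's explicit $(p+1,p)$, but the projective limit you extract is the correct one, so the argument goes through as the paper's does.
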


\begin{proof}[Proof of Theorem~\ref{thm_sequence}]
For $a= (x,y) \in \overline{C_{(b,i)}}$, 
let $\underline{a} =\underline{(x,y)}$ denote its projective class. 
We have $\underline{[F_{b_p}]}= \underline{(1,p)} \to \underline{[E_{(b,i)}]}= \underline{(0,1)}$ 
as $p \to \infty$.  
If $[E_{(b,i)}]$ is a fibered class, then $[E_{(b,i)}] \in \mathcal{C}$ by Remark~\ref{rem_e} 
and $\mathrm{Ent}(b_p) \to \mathrm{Ent}([E_{(b,i)}])$ as $p \to \infty$ by Theorem~\ref{thm_Fried_1}(2). 
If $[E_{(b,i)}]$ is a non-fibered class, then 
$[E_{(b,i)}] \in \partial \mathcal{C}$ by Remark~\ref{rem_e}, 
and $\mathrm{Ent}(b_p) \to \infty$ as $p \to \infty$  by Theorem~\ref{thm_Fried_1}(3). 
We finish the proof of (1). 
We turn to (2). 
Since $[F_{\beta_p}]= (p+1,p)  \in C_{(b,i)}$, 
its projective class goes to $\underline{(1,1)}  $ as $p \to \infty$. 
Since $(1,1) \in C_{(b,i)} \subset \mathcal{C}$ by Theorem~\ref{thm_positivemain}(1), 
$\mathrm{Ent}(\beta_p) \to \mathrm{Ent}((1,1))$ as $p \to \infty$ by Theorem~\ref{thm_Fried_1}(2). 
This completes the proof of (2). 
Finally we prove (3). 
The fibered class of $F$-surface of $(b\Delta^{2  p} )_1$ 
is given by $(p+1,1) \in C_{(b,i)}$. 
Its projective class goes to $\underline{[F_b]}= \underline{(1,0)} $ as $p \to \infty$. 
Thus $\mathrm{Ent}((b\Delta^{2 p} )_1) \to \mathrm{Ent}(b)$ 
as $p \to \infty$. 
This completes the proof. 
\end{proof}

We use Theorem~\ref{thm_sequence}(1)(2) in Section~\ref{section_application}. 
For an application using (3), see \cite{HK17}.

\begin{proof}[Proof of Theorem~\ref{thm_nomalizedentropy}]
Suppose that  $b \in B_n$ is  pseudo-Anosov with $\pi_b(i)= i$. 
Let $\beta(k)$ denote $b \Delta^{2k} \in B_n$ for $k \ge 1$. 
Clearly  $\beta(k)$ is pseudo-Anosov with the same dilatation as $b$ (for any $k$) and 
$\beta(k)$ is  positive  for $k$ large. 
We fix such large $k$.  
By Lemma~\ref{lem_positive} $\beta(k)$ is $i$-increasing. 
If we let $z_p= (\beta(k) \Delta^{2p})_1$, 
then $M_{z_p} \simeq M_{\beta(k)} \simeq M_b$ holds for $p \ge 1$. 
By Theorem~\ref{thm_sequence}(3), 
$\{z_p\}$ has a small normalized entropy  and 
$\mathrm{Ent}(z_p) \to \mathrm{Ent}(\beta(k)) = \mathrm{Ent}(b)$ 
as $p \to \infty$. 
\end{proof}

Let $b_p^{\bullet}$ denote the braid obtained from $(i+ pu)$-increasing $b_p$ 
by removing the strand of the index  $i+ pu$. 
Taking its spherical element 
we have $S(b_p^{\bullet})$. 
A mild generalization of the sequence $\{b_p\}$ 
is the ones $\{b_p^{\bullet}\}$ and $\{S(b_p^{\bullet})\}$ varing $p$. 
Although  $b_p^{\bullet}$, $S(b_p^{\bullet})$ may not be pseudo-Anosov, 
they are frequently pseudo-Anosov.  
To be more precise, we need to consider    
the number of prongs of singularities in the stable foliation $\mathcal{F}_{b_p}$ for $b_p$ 
as we explained in Section~\ref{subsection_frombraids}. 
This is the motivation of the study in Section~\ref{section_stablefoliation}

\section{Stable foliation for the monodromy} 
\label{section_stablefoliation}

\begin{center}
\begin{figure}
\includegraphics[width=2.5in]{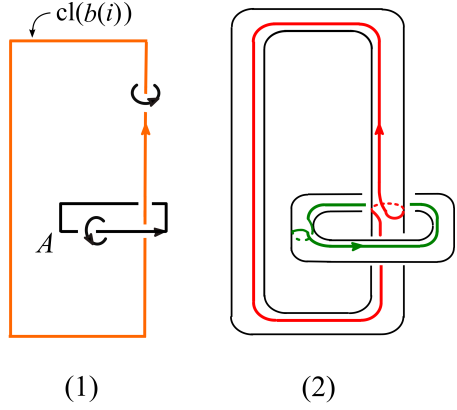} 
\caption{
Case: $b$ is $i$-increasing. 
(1)  Meridian and  longitude basis. 
(2) 
Two boundary slopes 
$\partial_{(b,A)} F_{(1,1)}$ (in green) on  $\mathcal{T}_{(b,A)}$ and 
$\partial_{(b,i)} F_{(1,1)} $ (in red) on $\mathcal{T}_{(b,i)}$ when $(x,y)= (1,1)$. 
}
\label{fig_slope}
\end{figure}
\end{center}

Let $b$ be pseudo-Anosov and  $i$-monotonic with the sign $\epsilon(b,i)= \epsilon$. 
For any primitive integral class $(x,y) \in C_{(b,i)}$, 
the oriented sum 
$F_{(x,y)} = x F_b+ y E_{(b,i)}$ is connected. 
Let $\mathcal{T}_{(b,A)}$ and $\mathcal{T}_{(b,i)}$ 
denote the tori $\partial \mathcal{N}(A)$ and $\partial \mathcal{N}(\mathrm{cl}(b(i)))$ respectively. 
Let us set 
$$\partial_{(b,A)} F_{(x,y)} = \partial F_{(x,y)} \cap \mathcal{T}_{(b,A)} \hspace{3mm}\mbox{and}\hspace{3mm}
\partial_{(b,i)} F_{(x,y)} = \partial F_{(x,y)} \cap \mathcal{T}_{(b,i)},$$ 
each of which is a single simple closed curve on the torus (since $\gcd(x,y)=1$). 
Recall that we chose the orientation of the axis for the $i$-monotonic $b$ in Section~\ref{section_imonotonic}. 
We use the meridian and  longitude basis $\{m_A, \ell_A\}$ for $\mathcal{T}_{(b,A)}$ 
to represent a homology class of a disjoint union of simple closed curves on $\mathcal{T}_{(b,A)}$. 
We also use the meridian and the longitude basis 
$\{m_i, \ell_i\}$ for  $\mathcal{T}_{(b,i)}$. 
Observe that the homology classes 
$[\partial_{(b,A)} F_{(x,y)}]$ and $[\partial_{(b,i)} F_{(x,y)}]$ are given by the pairs of integers 
\begin{equation}
\label{equation_slope}
[\partial_{(b,A)} F_{(x,y)}] = (- \epsilon y, x) \hspace{3mm}\mbox{and}\hspace{3mm} 
[\partial_{(b,i)} F_{(x,y)}]= (-\epsilon x, y).
\end{equation}
They are called  {\it boundary slopes} of $F_{(x,y)}$. 
See Figure~\ref{fig_slope}.

Let $ \phi_b: F_b \rightarrow F_b$ be the pseudo-Anosov monodromy of a fiber $F_b$ 
of the fibration on $M_b \rightarrow S^1$. 
The stable foliation $ \mathcal{F}_b$ of $\phi_b$ 
has singularities on each boundary component of $F_b$. 
Now we consider the suspension flow $\phi_b^t$ ($t \in {\Bbb R}$) on the mapping torus $M_b$. 
We obtain a disjoint union of simple closed curves $c_A = c_{(b,A)}$ on  $\mathcal{T}_{(b,A)}$ 
(possibly  a single simple closed curve) 
which is a  union of closed orbits for singularities in $\partial_{(b,A)} F_b$ under the flow. 
Similarly we have a disjoint union of simple closed curves $c_i = c_{(b,i)}$ on  $\mathcal{T}_{(b,i)}$ 
(possibly  a single simple closed curve again)  
which is a union of closed orbits for singularities in $\partial_{(b,i)} F_b$. 
(Figure~\ref{fig_sakuma3braid} depicts these closed curves for some pseudo-Anosov $3$-braid.) 
A useful tool is  {\it train track maps}  
which encode those data 
$\phi_b$, $\mathcal{F}_b$. 
They also enable us to compute homology classes $[c_A]$ and $[c_i]$. 

The following lemma is a consequence of Theorem~\ref{thm_Fried}(2) by Fried. 

\begin{lem}
\label{lem_foliation}
Let $\phi_{(x,y)}: F_{(x,y)} \rightarrow F_{(x,y)}$ be the monodromy of a fibration on $M_b \rightarrow S^1$
associated with a primitive integral class $(x,y)\in C_{(b,i)}$. 
Then the stable foliation $\mathcal{F}_{(x,y)}$  for  $\phi_{(x,y)}$ is 
$\mathfrak{i}([c_A], [\partial_{(b,A)} F_{(x,y)}])$-pronged  
at  $\partial_{(b,A)} F_{(x,y)}$, 
and is $\mathfrak{i}([c_i], [\partial_{(b,i)} F_{(x,y)}])$-pronged 
at  $\partial_{(b,i)} F_{(x,y)}$, 
where $\mathfrak{i}(\cdot, \cdot)$ means the geometric intersection number 
between homology classes of closed curves. 
\end{lem}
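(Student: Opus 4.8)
The plan is to apply Fried's theorem (Theorem~\ref{thm_Fried}(2)) to the pseudo-Anosov monodromy $\phi_b$ of the fiber $F_b$ and read off the singularity data of $\phi_{(x,y)}$ from the suspended stable foliation. Let $\widehat{\mathcal{F}_b}$ denote the suspension of $\mathcal{F}_b$ by $\phi_b$ on $M_b$. Since $C_{(b,i)} \subset \mathcal{C}$ by Theorem~\ref{thm_positivemain}(1), each primitive integral class $(x,y) \in C_{(b,i)}$ lies in the fibered cone containing $[F_b]$. By Theorem~\ref{thm_Fried}(1) we may realize $F_{(x,y)}$ as a surface transverse to the suspension flow $\phi_b^t$ whose first return map is $\phi_{(x,y)}$, and by Theorem~\ref{thm_Fried}(2) the stable foliation $\mathcal{F}_{(x,y)}$ is the slice $\widehat{\mathcal{F}_b} \cap F_{(x,y)}$. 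Thus the prong structure of $\mathcal{F}_{(x,y)}$ at a boundary component is determined by how the singular locus of $\widehat{\mathcal{F}_b}$ meets $\partial F_{(x,y)}$.

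First I would identify the singular locus of $\widehat{\mathcal{F}_b}$ on the two boundary tori. The singularities of $\mathcal{F}_b$ lying on $\partial_{(b,A)} F_b$ are permuted by the first return map $\phi_b$, and under the suspension flow they trace out closed orbits on $\mathcal{T}_{(b,A)}$; by construction this union of closed orbits is exactly $c_A = c_{(b,A)}$. Likewise the singularities of $\mathcal{F}_b$ on $\partial_{(b,i)} F_b$ suspend to $c_i = c_{(b,i)}$ on $\mathcal{T}_{(b,i)}$. Consequently the singular locus of $\widehat{\mathcal{F}_b}$ meets $\mathcal{T}_{(b,A)}$ precisely in $c_A$ and meets $\mathcal{T}_{(b,i)}$ precisely in $c_i$.

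Next I would count prongs. Since $\mathcal{F}_{(x,y)} = \widehat{\mathcal{F}_b} \cap F_{(x,y)}$, a prong of $\mathcal{F}_{(x,y)}$ at $\partial_{(b,A)} F_{(x,y)}$ is exactly a point where a singular leaf of $\widehat{\mathcal{F}_b}$, that is, the curve $c_A$, crosses the boundary slope $\partial_{(b,A)} F_{(x,y)} = \partial F_{(x,y)} \cap \mathcal{T}_{(b,A)}$ on the torus $\mathcal{T}_{(b,A)}$. Hence the number of prongs equals the cardinality of $c_A \cap \partial_{(b,A)} F_{(x,y)}$, and the same reasoning applies verbatim on $\mathcal{T}_{(b,i)}$ with $c_i$ and $\partial_{(b,i)} F_{(x,y)}$.

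The main point, and the step requiring care, is that this count of intersection points is the geometric intersection number rather than merely some count exceeding it. Here I would use that $c_A$ is a union of orbits of the suspension flow while $F_{(x,y)}$, and hence its boundary curve on $\mathcal{T}_{(b,A)}$, is transverse to that flow. Therefore every intersection of $c_A$ with $\partial_{(b,A)} F_{(x,y)}$ is transverse and all such intersections carry the same sign, determined by the flow crossing the cooriented fiber. Coherently oriented transverse intersections on a torus bound no bigon, so the two curves are in minimal position and the count realizes $\mathfrak{i}([c_A],[\partial_{(b,A)} F_{(x,y)}])$. The identical argument on $\mathcal{T}_{(b,i)}$ yields $\mathfrak{i}([c_i],[\partial_{(b,i)} F_{(x,y)}])$, completing the proof.
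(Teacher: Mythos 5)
Your proposal is correct and takes essentially the same approach as the paper, which proves the lemma simply by invoking Fried's theorem (Theorem~\ref{thm_Fried}(2)): the stable foliation for $\phi_{(x,y)}$ is the slice $\widehat{\mathcal{F}^s} \cap F_{(x,y)}$ of the suspended foliation, so prongs at a boundary component correspond to intersections of the singular closed orbits $c_A$ (resp.\ $c_i$) with the boundary slope of $F_{(x,y)}$. Your final step --- that transversality of $\partial F_{(x,y)}$ to the flow forces coherently signed intersections, hence minimal position on the torus and equality with $\mathfrak{i}(\cdot,\cdot)$ --- correctly supplies the counting detail that the paper leaves implicit.
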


\begin{rem}
\label{rem_homology}
Every closed orbit of the suspension flow $\phi_b^t$ on the mapping torus $M_b$ 
travels around $S^1$ direction at least once. 
This implies that 
 $[c_A]$ has a non-zero first coordinate of the meridian and longitude basis for   $\mathcal{T}_{(b,A)}$, 
i.e., we have  $[c_A] = (k,\ell) \in {\Bbb Z}^2$ with $k \ne 0$, 
since the meridian for $\mathcal{T}_{(b,A)}$ corresponds to the flow direction. 
Similarly, $[c_i]$ has a non-zero second coordinate of the  meridian and longitude basis for $\mathcal{T}_{(b,i)}$, 
that is we have $[c_i]= (k', \ell') \in {\Bbb Z}^2$ with $\ell' \ne 0$, 
since the longitude for $\mathcal{T}_{(b,i)}$ corresponds to the flow direction in this case. 
\end{rem}

Recall that given a braid $b \in B_n$, 
we denote by $S(b) \in SB_n$, the spherical $n$-braid with the same word as $b$. 
For an $i$-increasing braid $b$ of pseudo-Anosov type, 
consider the braid $(b\Delta^{2 p} )_1= b[0,p,1]$ in Example \ref{ex_sequence}(3). 
This is an $i[0,p,1]$-increasing braid. 
Then we have its spherical braid $S((b\Delta^{2 p} )_1)$. 
We now define other braids obtained from $(b\Delta^{2 p} )_1$. 
Let $(b\Delta^{2 p} )_1^{\bullet}$ denote the braid obtained from $(b\Delta^{2 p} )_1$ 
by removing the strand of the index $i[0,p,1]$. 
Let $S((b\Delta^{2 p} )_1)$ and 
$S((b\Delta^{2 p} )_1^{\bullet})$ be the  spherical braids 
corresponding to $(b\Delta^{2 p} )_1$ and $(b\Delta^{2 p} )_1^{\bullet}$ respectively.  
Then we have the following result.

\begin{lem}
\label{lem_appendix}
Suppose that $b$ is an $i$-increasing braid of pseudo-Anosov type. 
For $p$ large, the braid  $(b\Delta^{2 p} )_1^{\bullet}$ and the spherical braids $S((b\Delta^{2 p} )_1)$, $S((b\Delta^{2 p} )_1^{\bullet})$ are all pseudo-Anosov 
with the same dilatation as $(b\Delta^{2 p} )_1$. 
\end{lem}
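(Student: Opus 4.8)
The plan is to reduce the statement to the prong-counting criterion recalled in Section~\ref{subsection_stablefoliations} and then to show that the relevant prong numbers grow without bound in $p$. First I would identify the fibered class carrying $(b\Delta^{2p})_1$: by the discussion preceding Theorem~\ref{thm_sequence}(3), the $F$-surface of $(b\Delta^{2p})_1$ is the primitive class $(p+1,1)\in C_{(b,i)}$, so its monodromy is $\phi_{(p+1,1)}$ with stable foliation $\mathcal{F}_{(p+1,1)}$. By the criterion of Section~\ref{subsection_stablefoliations}, removing the distinguished strand of index $i[0,p,1]$ yields a pseudo-Anosov braid of the same dilatation provided $\mathcal{F}_{(p+1,1)}$ is not $1$-pronged along the boundary component of $F_{(p+1,1)}$ lying on $\mathcal{T}_{(b,i)}$, since this strand is the image of $b(i)$ under the disk-twist construction. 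Likewise, passing to the spherical braid preserves the pseudo-Anosov property and the dilatation provided $\mathcal{F}_{(p+1,1)}$ is not $1$-pronged along the boundary component lying on the axis torus $\mathcal{T}_{(b,A)}$. The three assertions of the lemma then follow once both non-$1$-pronged conditions hold.

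Next I would compute the two prong numbers using Lemma~\ref{lem_foliation} together with the boundary slopes in (\ref{equation_slope}). Since $b$ is $i$-increasing we have $\epsilon(b,i)=1$, so for the class $(x,y)=(p+1,1)$ the slopes are $[\partial_{(b,i)}F_{(p+1,1)}]=(-(p+1),1)$ and $[\partial_{(b,A)}F_{(p+1,1)}]=(-1,p+1)$. Writing $[c_i]=(k',\ell')$ and $[c_A]=(k,\ell)$ in the respective meridian--longitude bases and using that the geometric intersection number of two torus homology classes is the absolute value of their determinant, Lemma~\ref{lem_foliation} gives that $\mathcal{F}_{(p+1,1)}$ is $|k'+\ell'(p+1)|$-pronged at $\partial_{(b,i)}F_{(p+1,1)}$ and $|k(p+1)+\ell|$-pronged at $\partial_{(b,A)}F_{(p+1,1)}$.

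Finally I would invoke Remark~\ref{rem_homology}, which guarantees $\ell'\neq 0$ and $k\neq 0$. Consequently both prong counts are affine functions of $p$ with nonzero leading coefficient, hence tend to $\infty$; in particular each exceeds $1$ for all sufficiently large $p$. Thus for $p$ large $\mathcal{F}_{(p+1,1)}$ is not $1$-pronged at either boundary component, and the criterion of Section~\ref{subsection_stablefoliations} shows that all three braids $(b\Delta^{2p})_1^{\bullet}$, $S((b\Delta^{2p})_1)$ and $S((b\Delta^{2p})_1^{\bullet})$ are pseudo-Anosov with the same dilatation as $(b\Delta^{2p})_1$. The one genuinely delicate point is the bookkeeping that matches the removed strand and the axis to the correct tori $\mathcal{T}_{(b,i)}$ and $\mathcal{T}_{(b,A)}$ and confirms that the classes $[c_i]$, $[c_A]$ entering the intersection numbers are exactly those whose nonvanishing coordinates are supplied by Remark~\ref{rem_homology}; once this is settled, the linear growth of the intersection numbers does the rest.
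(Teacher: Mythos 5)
Your proposal is correct and follows essentially the same route as the paper's proof: the same identification of the fibered class $(p+1,1)\in C_{(b,i)}$, the same boundary slopes from (\ref{equation_slope}), the same intersection numbers $|k(p+1)+\ell|$ and $|k'+\ell'(p+1)|$ via Lemma~\ref{lem_foliation}, and the same appeal to Remark~\ref{rem_homology} to see both prong counts grow linearly in $p$, followed by the removal/sphericalization criterion of Section~\ref{subsection_stablefoliations}. Your closing remark about matching the removed strand and the axis to the tori $\mathcal{T}_{(b,i)}$ and $\mathcal{T}_{(b,A)}$ is exactly the bookkeeping the paper leaves implicit in its final sentence.
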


Before proving Lemma \ref{lem_appendix}, we recall a formula of the geometric intersection number $\mathfrak{i}([c],[c'])$ 
between two homology classes of simple closed curves $c$, $c'$ on a torus. 
Let $(p,q)$ and $(p',q')$ be primitive elements of ${\Bbb Z}^2$ which represent  $[c]$ and $[c']$ 
respectively. 
Then  $$\mathfrak{i}([c], [c']) =|pq'-p'q|.$$ 

\begin{proof}[Proof of Lemma \ref{lem_appendix}]
The fibered class of $F$-surface of $(b\Delta^{2 p} )_1$ is $(p+1,1) \in C_{(b,i)}$. 
We have $[\partial_{(b,A)} F_{(p+1,1)}] = (-1, p+1)$ and 
$[\partial_{(b,i)} F_{(p+1,1)}] = (-(p+1),1)$, see (\ref{equation_slope}). 
By Remark \ref{rem_homology}, one can write 
$[c_A] = (k,\ell)$ with $k \ne 0$ and 
$[c_i] = (k', \ell')$ with $\ell' \ne 0$. 
Then $\mathfrak{i}([c_A], [\partial_{(b,A)} F_{(p+1,1)}]) = |k(p+1)+ \ell|$ and 
$\mathfrak{i}([c_i], [\partial_{(b,i)} F_{(p+1,1)}]) = |k'+ \ell'(p+1)|$. 
Since $k \ne 0$ and $\ell' \ne 0$, 
these intersection numbers are increasing with respective to $p$ and 
they are clearly greater than $1$ when $p$ is large. 
 Then Lemma \ref{lem_foliation} says that 
  when $p$ is large, 
  the stable foliation $\mathcal{F}_{(p+1,1)}$  for the monodromy  $\phi_{(p+1,1)}$ is 
not $1$-pronged at each component of  $\partial_{(b,A)} F_{(p+1,1)} \cup \partial_{(b,i)} F_{(p+1,1)}$. 
By the discussion in Section \ref{subsection_stablefoliations}, we are done. 
\end{proof}

\section{Properties of $F$-surfaces and $E$-surfaces}
\label{section_properties}

\begin{center}
\begin{figure}
\includegraphics[width=4.5in]{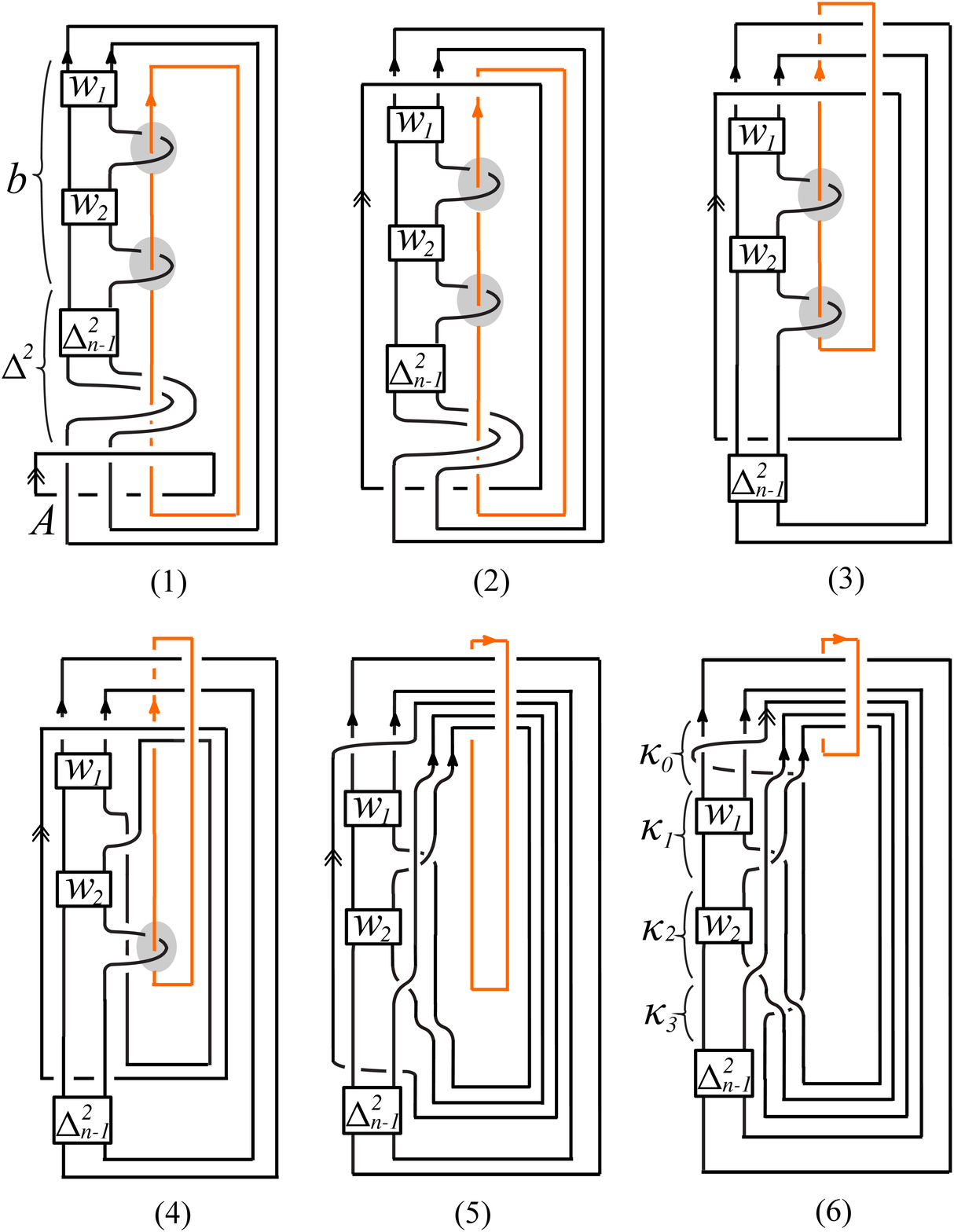} 
\caption{
Demonstration of Lemma~\ref{lem_ef} 
when $b$ is $n$-increasing with $u(b,n)=2$. 
(1) $\mathrm{br}(\beta)$ 
of $\beta=  w_1 \sigma_{n-1}^2 w_2 \sigma_{n-1}^2 \Delta^2$. 
(5)(6) $\mathrm{br}(\gamma)$ of  $\gamma=  \kappa_0\kappa_1 \kappa_2  \kappa_3 \Delta_{n-1}^2 $.}
\label{fig_fibercone}
\end{figure}
\end{center}

The aim of this section is to study  properties of  $E$-, $F$-surfaces 
and to present the technique used in the last section. 

\begin{lem}
\label{lem_ef}
For an $i$-increasing braid $b \in B_n$ 
with $u(b,i)= u$, 
we set $\beta= b \Delta^{2} \in B_n$.  
Then there is an $n$-increasing braid $\gamma \in B_{n+ u}$ 
such that 
$$(\mathrm{br}(\beta), \mathrm{cl}(\beta(i)), A_{\beta}) \sim (\mathrm{br}({\gamma}), A_{\gamma}, \mathrm{cl}(\gamma(n))).$$ 
In particular 
$M_b \simeq M_{\beta} \simeq M_{\gamma}$ and 
$E_{(\beta, i)}= F_{\gamma}$, $F_{\beta}= E_{(\gamma, n)}$ up to isotopy in $M_{\beta} $. 
Moreover 
if $b$ is pseudo-Anosov, then $\gamma$ is also pseudo-Anosov. 
\end{lem}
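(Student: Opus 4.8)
The plan is to exploit the symmetry of $\mathrm{br}(\beta)$ under exchanging the braid axis $A_\beta$ with the unknotted component $\mathrm{cl}(\beta(i))$, and to read off $\gamma$ by regarding $\mathrm{cl}(\beta(i))$ as a new axis. First I would reduce to the $n$-increasing case: by Lemma~\ref{lem_bdelta} the braid $\beta=b\Delta^2$ is $i$-increasing with $u(\beta,i)=u+n-1$, and by Lemma~\ref{lem_normalform} it is conjugate to an $n$-increasing braid in standard form, so by Morton's theorem the triple under study is unchanged up to the equivalence we are after. With $\beta$ in the position fixed by $\diamondsuit 1,2,3$, the component $\mathrm{cl}(\beta(i))$ is a round unknot whose associated disk $D$ meets every other component of $\mathrm{br}(\beta)$, namely all of $\mathrm{cl}(\beta-\beta(i))$ together with the axis $A_\beta$, transversally and with uniformly positive sign, since $\beta$ is $i$-increasing and the orientation of $A_\beta$ was chosen so that $D\cap A_\beta$ has sign $\epsilon(\beta,i)=+1$.

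Next I would promote this single positively-transverse disk to a genuine closed-braid structure. The local normal form around $b(i)$ recorded in Figure~\ref{fig_local}, in which each intersection segment $s'$ passes over $b(i)$, lets me isotope $\mathrm{cl}(\beta-\beta(i))\cup A_\beta$ so that it is everywhere transverse to the disk fibration of the solid torus $S^3\setminus\mathrm{cl}(\beta(i))$, meeting each page positively. Carrying out this isotopy, the sequence of moves depicted in Figure~\ref{fig_fibercone}, exhibits $\mathrm{cl}(\beta-\beta(i))\cup A_\beta$ as a closed braid about the new axis $A_\gamma:=\mathrm{cl}(\beta(i))$, and I define $\gamma$ to be this braid. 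Its number of strands equals the count $|D\cap(\mathrm{cl}(\beta-\beta(i))\cup A_\beta)|=u(\beta,i)+1=n+u$, so $\gamma\in B_{n+u}$. Because $A_\beta$ meets $D$ in a single point it becomes one strand that closes to itself; indexing it as the $n$th strand gives $\pi_\gamma(n)=n$ and $\mathrm{cl}(\gamma(n))=A_\beta$. Since the whole construction is realized by an ambient isotopy of $S^3$ fixing the two distinguished unknots setwise, the triple equivalence $(\mathrm{br}(\beta),\mathrm{cl}(\beta(i)),A_\beta)\sim(\mathrm{br}(\gamma),A_\gamma,\mathrm{cl}(\gamma(n)))$ follows.

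To finish I would verify the numerical and dynamical claims. The disk for the pair $(\gamma,n)$ is the braid-axis disk $D_{A_\beta}$ of $\beta$, and the $n-1$ strands $\mathrm{cl}(\gamma-\gamma(n))=\mathrm{cl}(\beta-\beta(i))$ all cross it with the same sign, being strands of the $n$-braid $\beta$, so $\gamma$ is $n$-increasing with $u(\gamma,n)=n-1$. Tracking which disk builds which surface, the $E$-surface $E_{(\beta,i)}$ assembled from $D$ about $\mathrm{cl}(\beta(i))=A_\gamma$ is exactly the $F$-surface $F_\gamma$, while $F_\beta$ assembled from $D_{A_\beta}$ is $E_{(\gamma,n)}$, both up to isotopy in $M_\beta$. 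The homeomorphism $M_b\simeq M_\beta$ is a disk twist about the axis as in the proof of Theorem~\ref{thm_main}, and $M_\beta\simeq M_\gamma$ since the links are equivalent; as $b$ pseudo-Anosov forces this common manifold to be hyperbolic and fibered with fiber $F_\gamma$, the monodromy of $\gamma$ is pseudo-Anosov by Thurston's theorem. The main obstacle is the middle step: upgrading the one-disk positivity supplied by the $i$-increasing hypothesis to transversality with every page of the fibration of $S^3\setminus\mathrm{cl}(\beta(i))$, which is where the explicit isotopy of Figure~\ref{fig_fibercone} and a careful bookkeeping of orientations, to land on $n$-increasing rather than $n$-decreasing, are essential.
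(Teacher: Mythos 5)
Your proposal is correct and is essentially the paper's own argument: the paper likewise reduces to the standard form of Lemma~\ref{lem_normalform} (it standardizes $b$ and expands $\Delta^2$ via the identity $\Delta^2=\Delta_{n-1}^2\,\sigma_{n-1}\cdots\sigma_2\sigma_1\sigma_1\sigma_2\cdots\sigma_{n-1}$, whereas you standardize $\beta$ directly, an immaterial difference up to the conjugacy equivalence of Theorem~\ref{thm_Morton}), then performs exactly your axis-exchange re-braiding of $\mathrm{br}(\beta)$ about $\mathrm{cl}(\beta(i))$ by the explicit isotopy of Figure~\ref{fig_fibercone}, with the same strand count $u(\beta,i)+1=(u+n-1)+1=n+u$, the same identifications $E_{(\beta,i)}=F_{\gamma}$ and $F_{\beta}=E_{(\gamma,n)}$, and the same appeal to hyperbolicity of $M_b\simeq M_{\beta}\simeq M_{\gamma}$ to get pseudo-Anosovness of $\gamma$. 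You also correctly isolate the crux where your write-up is thinnest: positive transversality to the single disk $D$ does not by itself yield a closed braid on $n+u$ strands (in a solid torus, winding number does not control braid index, so some explicit isotopy is unavoidable), and this is precisely the step the paper, like you, settles by the concrete standard-form moves depicted in Figure~\ref{fig_fibercone}.
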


A similar claim holds for $i$-decreasing braids.

\begin{proof}
By Lemma~\ref{lem_normalform} we may assume that 
$b \in B_n$ is an $n$-increasing braid of a standard form 
$b= (w_1 \sigma_{n-1}^2) \cdots (w_u \sigma_{n-1}^2)$ 
containing $u $ subwords $\sigma_{n-1}^2$. 
Using the identity 
$$\Delta^2= \Delta_{n-1}^2 \sigma_{n-1} \cdots \sigma_2 \sigma_1 \sigma_1 \sigma_2  \cdots \sigma_{n-1} \in B_n,$$
we have (Figure~\ref{fig_fibercone}(1)) 
$$\mathrm{br}(\beta)= \mathrm{br}(b\Delta^2)= 
\mathrm{br}(w_1 \sigma_{n-1}^2 \cdots w_u \sigma_{n-1}^2  \Delta_{n-1}^2 \sigma_{n-1} \cdots \sigma_2 \sigma_1 \sigma_1 \sigma_2 \cdots \sigma_{n-1}).$$
We first deform  $\mathrm{br}(\beta)$ into a link as in Figure~\ref{fig_fibercone}(3). 
The same figure(1)(2)(3) tells us the process to get the desired link in (3). 
Then we perform the local moves in the shaded regions 
containing $u$ subwords $\sigma_{n-1}^2$ in $b$  
so that the link in question is  a union of the closure of some $n$-increasing braid 
$\gamma \in B_{n+u}$  and its braided axis, 
namely a braided link, 
see Figure~\ref{fig_fibercone}(3)(4)(5).  
As a result, 
$$(\mathrm{br}(\beta), \mathrm{cl}(\beta(n)), A_{\beta}) \sim  (\mathrm{br}({\gamma}), A_{\gamma}, \mathrm{cl}(\gamma(n))).$$
This expression says that $M_{\beta} \simeq M_{\gamma}$ and 
the $E$-, $F$-surfaces for $\beta$  are equal to 
the $F$-, $E$-surfaces for $\gamma$.  
Since $M_b \simeq M_{\beta}$ we are done. 
\end{proof}

Here we introduce a simple representative of 
$\gamma \in B_{n+u}$  in Lemma~\ref{lem_ef}. 
By the deformation as in (5)(6) of Figure~\ref{fig_fibercone}, 
we can take the following representative of $\gamma $.  
\begin{eqnarray*}
\gamma&=& \kappa_0 \kappa_1  \cdots \kappa_{u+1}  \Delta_{n-1}^2, \hspace{2mm} \mbox{where} 
\\
\kappa_0 &=& \sigma_{n-1} \sigma_{n-2} \cdots \sigma_1 \sigma_1 \sigma_2 \cdots \sigma_{n+u-1}, 
\\
\kappa_j&=& w_j \sigma_{n-1} \sigma_n \cdots \sigma_{n+u-j-1} \sigma_{n+u-j-2}^{-1} \cdots  \sigma_{n-1}^{-1} 
\hspace{0.5cm} \mbox{if}\ 1 \le j \le u-1,
\\
\kappa_u&=& w_u \sigma_{n-1}, 
\\
\kappa_{u+1}&=& \sigma_n^{-1} \hspace{3.5cm} \mbox{if}\  u=1, 
\\
\kappa_{u+1} &=& \sigma_{n+u-1}^{-1}  \sigma_{n+u-2}^{-1}\cdots \sigma_n^{-1} \hspace{0.5cm} \mbox{if}\ u \ge 2. 
\end{eqnarray*}
For example if $(n,u)= (3,2)$, then 
\begin{equation}
\label{equation_nu32}
\gamma= \kappa_0 \kappa_1 \kappa_2  \kappa_3 \Delta_2^2 
= \sigma_2 \sigma_1^2  \sigma_2 \sigma_3 \sigma_4
w_1 \sigma_2 \sigma_3 \sigma_2^{-1}  w_2 \sigma_2 \sigma_4^{-1} \sigma_3^{-1}  \sigma_1^2.
 \end{equation}
If $(n,u) = (3,3)$, 
then $\gamma=  \kappa_0 \kappa_1 \kappa_2 \kappa_3 \kappa_4 \Delta_2^2$, that is 
\begin{equation}
\label{equation_nu33}
\gamma = 
\sigma_2 \sigma_1^2  \sigma_2 \sigma_3 \sigma_4 \sigma_5 
w_1 \sigma_2 \sigma_3 \sigma_4 \sigma_3^{-1} \sigma_2^{-1} w_2 \sigma_2 \sigma_3 \sigma_2^{-1} 
w_3 \sigma_2 
\sigma_5^{-1}  \sigma_4^{-1} \sigma_3^{-1} \sigma_1^2. 
\end{equation}

Lemma~\ref{lem_ef} is used  in the following situation. 
Suppose that $\alpha \in B_{n+u}$ is a $j$-increasing  braid  and 
our task is to prove that $\alpha$ is pseudo-Anosov and its $E$-surface $E_{(\alpha, j)}$ is a fiber of a fibration on  $M_{\alpha} \rightarrow S^1$. 
(The conditions are needed to apply Theorem~\ref{thm_sequence}(1) for  $\alpha$.)
To do this, we need  to  find an $i$-increasing and pseudo-Anosov braid $b \in B_n$ with $u= u(b,i)$  
and need to check the resulting $n$-increasing braid $\gamma \in B_{n+u}$ in Lemma~\ref{lem_ef}  satisfies the property 
$$(\mathrm{br}(\gamma),  A_{\gamma}, \mathrm{cl}(\gamma(n))) \sim (\mathrm{br}(\alpha),  A_{\alpha}, \mathrm{cl}(\alpha(j))) ,$$ 
i.e. $\gamma$ is conjugate to $\alpha$  preserving the corresponding strand. 
If this equivalence holds, then by Lemma~\ref{lem_ef} together with the above equivalence $\sim$, our task is done. 
As a result $\{\alpha_p\}$ has a small normalized entropy  by  Theorem~\ref{thm_sequence}(1).

\section{Application} 
\label{section_application}

In the last section we prove Theorems~\ref{thm_pal}, \ref{thm_skp} and \ref{thm_spin}. 
We  first recall a study of pseudo-Anosov $3$-braids  \cite{Handel97,Matsuoka86}. 
Let $w$ be a word in $\sigma_1^{-1}$ and  $\sigma_2$. 
If both $\sigma_1^{-1}$ and $\sigma_2$ occur at least once in $w$, 
then we say that $w$ is a {\it pA word}. 
It is known that 
the $3$-braid  represented by a pA word is pseudo-Anosov. 
Conversely a $3$-braid $b$ is pseudo-Anosov, then 
there is a pA word $w$ such that 
the braid represented by $w$ is conjugate to $b$ up to a power of the full twist.   

 The stable foliation $\mathcal{F}_b$ is 
$1$-pronged at each boundary component of $F_b$ for each pseudo-Anosov $3$-braid $b$.  
Figure~\ref{fig_sakuma3braid}(3) exhibits a train track automaton. 
A train track map for the $3$-braid represented by a pA word $w$ 
is obtained from the closed loop  corresponding to $w$ in the automaton. 
For more details, see Ham-Song~\cite{HS07}.

\subsection{Palindromic/Skew-palindromic braids}

We define an anti-homomorphism 
\begin{eqnarray*}
rev: B_n &\rightarrow& B_n
\\
\sigma_{i_1}^{\mu_1} \sigma_{i_2}^{\mu_2} \cdots \sigma_{i_k}^{\mu_k} &\mapsto& 
\sigma_{i_k}^{\mu_k} \cdots \sigma_{i_2}^{\mu_2} \sigma_{i_1}^{\mu_1}, \hspace{3mm}\mu_j = \pm 1. 
\end{eqnarray*}
A braid $b \in B_n$ is   palindromic if $rev(b)= b$. 
Clearly $b \cdot rev(b)$ is  palindromic  for any $b \in B_n$. 
Let us consider another anti-homomorphism  
\begin{eqnarray*}
skew: B_n &\rightarrow& B_n
\\
\sigma_{i_1}^{\mu_1} \sigma_{i_2}^{\mu_2} \cdots \sigma_{i_k}^{\mu_k} &\mapsto&
\sigma_{n-i_k}^{\mu_k} \cdots \sigma_{n- i_2}^{\mu_2} \sigma_{n-i_1}^{\mu_1}, \hspace{3mm}\mu_j = \pm 1. 
\end{eqnarray*}
A braid $b \in B_n$ is skew-palindromic 
if $skew(b)= b$. 
Clearly $b \cdot skew(b)$ is  skew-palindromic  for any $b \in B_n$.

We now prove Theroems~\ref{thm_pal} and \ref{thm_skp} 
which indicate the asymptotic behaviors of minimal entropies among these subsets are quite distinct.

\begin{proof}[Proof of Theorem~\ref{thm_pal}] 
For  the surjective homomorphism  $\pi: B_n \rightarrow \mathcal{S}_n$ 
we write $\pi_j= \pi(\sigma_j) $. 
Suppose that an $n$-braid 
$b = \sigma_{i_1}^{\mu_1} \sigma_{i_2}^{\mu_2} \cdots \sigma_{i_k}^{\mu_k} $ is palindromic. 
Since $rev(b)= b$ we have 
$$(\pi_{rev(b)} =) \pi_{i_k} \cdots \pi_{i_2} \pi_{i_1} = \pi_{i_1} \pi_{i_2} \cdots \pi_{i_k} (= \pi_b). $$
Multiply the both side by $\pi_{i_1} \pi_{i_2} \cdots \pi_{i_k} $ from the left: 
$$(\pi_{i_1} \pi_{i_2} \cdots \pi_{i_k}) \cdot (\pi_{i_k} \cdots \pi_{i_2} \pi_{i_1}) 
=  (\pi_{i_1} \pi_{i_2} \cdots \pi_{i_k}) \cdot ( \pi_{i_1} \pi_{i_2} \cdots \pi_{i_k}) =\pi_b^2 .$$
Since $\pi_j^2= id$ 
the left-hand side equals  $id$. 
Hence $id= \pi_b^2$ which means that  the square $b^2$ is  pure. 
A theorem by Song~\cite{Song05} states that for a pseudo-Anosov pure element $b' \in B_n$, 
its dilatation has a uniform lower bound 
$2 + \sqrt{5} \le \lambda(b')$. 
In particular if $b'= b^2$, then 
$2 + \sqrt{5} \le \lambda(b^2) = (\lambda(b))^2$. 
This completes the proof. 
\end{proof}

\begin{center}
\begin{figure}
\includegraphics[width=4in]{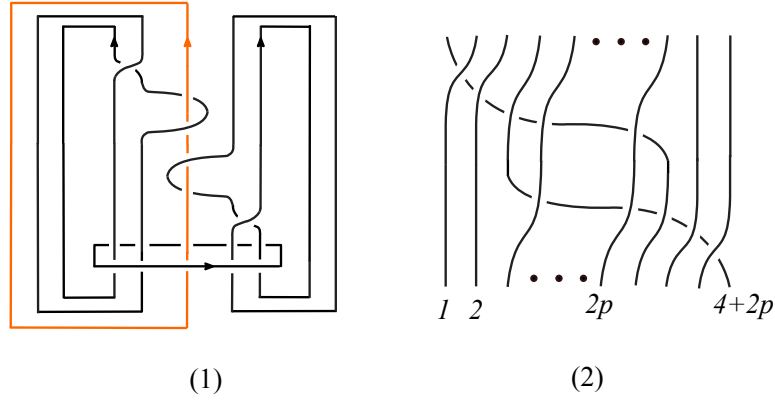} 
\caption{ 
(1) $\mathrm{br}(\xi)$. 
(2) Skew-palindromic  $\xi_p^{\bullet} \in B_{4+2p}$.}
\label{fig_122334}
\end{figure}
\end{center}

\begin{center}
\begin{figure}
\includegraphics[width=4.5in]{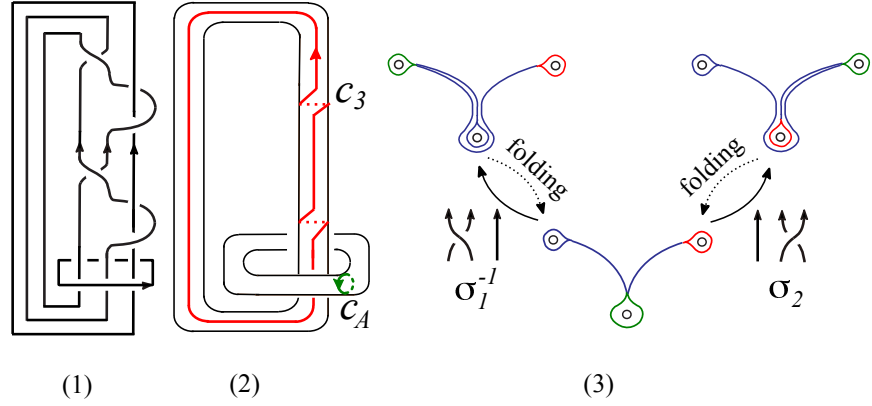} 
\caption{
(1) $\mathrm{br}(b)$ for $b= \sigma_1^{-1} \sigma_2^2 \sigma_1^{-1} \sigma_2^2$. 
(2)  $c_A \subset  \mathcal{T}_{(b,A)} $ and 
$c_3 \subset \mathcal{T}_{(b,3)}$. 
(3) Train track automaton.}
\label{fig_sakuma3braid}
\end{figure}
\end{center}

\begin{proof}[Proof of Theorem~\ref{thm_skp}]
We separate the proof into two cases, 
depending on the parity of the braid degree. 
We first prove 
$\log \delta(PA_{2n}) \asymp 1/n$. 
Let us take 
$\xi= \sigma_1 \sigma_2^2 \sigma_3^2 \sigma_4 \in B_5 $ (Figure~\ref{fig_122334}). 
The braid $\xi$ is  $3$-increasing with $u(\xi,3)=2$. 
We consider the disk twist about $D_{(\xi,3)}$. 
We obtain the braid $\xi_p$ which is $(3+2p)$-increasing for each $p \ge 1$. 
Observe that  $\xi_p^{\bullet}$ is a skew-palindromic braid with even degree  for each $p \ge 1$: 
$$\xi_p^{\bullet}
= (\sigma_1  \cdots \sigma_{1+ 2p} )(\sigma_3  \cdots \sigma_{3+ 2p}) \in B_{4+2p}.$$
(For the definition of $\xi_p^{\bullet}$, see Section~\ref{section_sequences}.) 
By the lower bound of  dilatations by Penner, 
 it is enough to prove that 
the sequence $\{\xi_p^{\bullet} \}$ has a small normalized entropy. 
We prove this in  the following two steps. 
In Step 1 
we prove that 
$\{\xi_p\}$ has a small normalized entropy. 
In Step 2 
we prove that 
the stable foliation $\mathcal{F}_{\xi_p}$  is not $1$-pronged at 
$\partial_{(\xi_p,3+2p)} F_{\xi_p}$ for $p \ge 1$. 
This tells us that 
$\xi_p^{\bullet} $ is pseudo-Anosov 
with the same dilatation as $\xi_p$. 
By Step 1 it follows that 
$\{\xi_p^{\bullet}\}$ has a small normalized entropy. 
\medskip

\noindent
{\bf Step 1.} 
The sequence  $\{\xi_p\}$ has a small normalized entropy. 
\medskip

\noindent
By Theorem~\ref{thm_sequence}(1) 
it suffices to prove that 
$\xi$ is pseudo-Anosov and $[E_{(\xi,3)}] $ is a fibered class. 
Consider a pseudo-Anosov braid $b = \sigma_1^{-1} \sigma_2^2 \sigma_1^{-1} \sigma_2^2 \in B_3$. 
It is  $3$-increasing with $u(b,3)=2$. 
For  $\beta = b \Delta^2$ we have $M_b \simeq M_{\beta}$.  
By Lemma~\ref{lem_ef} 
$(\mathrm{br}(\beta), \mathrm{cl}(\beta(3)), A_{\beta}) \sim (\mathrm{br}(\gamma), A_{\gamma}, \mathrm{cl}(\gamma(3)))$, 
where $\gamma \in B_5$ is the braid in (\ref{equation_nu32}) 
substituting $\sigma_1^{-1}$ for $w_1$ and $\sigma_1^{-1}$ for $w_2$. 
It is not hard to check that\footnote{There is a solution for the conjugacy problem on $B_n$ \cite{EM94}. 
The software {\it Braiding}~\cite{Gonzalez-Meneses} can be used to determine 
whether two braids are conjugate.}   
$\gamma$ is conjugate to $\xi $ in $B_5$ and 
 their  permutations  have  a common fixed point $3$.
Hence 
\begin{equation}
\label{equation_skew}
(\mathrm{br}(\beta), \mathrm{cl}(\beta(3)), A_{\beta}) \sim (\mathrm{br}(\xi), A_{\xi} , \mathrm{cl}(\xi(3))).
\end{equation}
In particular 
$E_{(\xi,3)} = F_{\beta}$ 
which means that $E_{(\xi,3)}$ is a fiber of a fibration on the hyperbolic mapping torus  $M_b \simeq  M_\xi$ over $S^1$.  
Thus $\xi$ is pseudo-Anosov. 
\medskip

\noindent
{\bf Step 2.} 
$\mathcal{F}_{\xi_p}$  is $(p+1)$-pronged at 
$\partial_{(\xi_p,3+2p)} F_{\xi_p}$ 
for $p \ge 1$. 
\medskip

\noindent
We read the singularity data of $\mathcal{F}_{\xi_p}$ from the monodromy 
$\phi_{\beta}: F_{\beta} \rightarrow F_{\beta}$ of the fibration on $M_{\beta} \rightarrow S^1$. 
First 
consider the suspension flow $\phi_b^t$ on the mapping torus $M_b$.  
Since $\mathcal{F}_b$   is 
$1$-pronged at each component of $F_b$, 
we have simple closed curves $c_A \subset \mathcal{T}_{(b,A)}$ and $c_3 \subset \mathcal{T}_{(b,3)}$ 
such that 
$[c_A] = (1,0)$, $[c_3] = (2,1) \in {\Bbb Z}^2$ (Figure~\ref{fig_sakuma3braid}(1)(2)). 

Next we turn to   $\beta = b \Delta^2 \in B_3$ and 
 the suspension flow $\phi_{\beta}^t$ on $M_{\beta} \simeq M_b$.  
We have simple closed curves $c_{(\beta,A)} \subset \mathcal{T}_{(\beta, A)}$ and 
$c_{(\beta, 3)} \subset \mathcal{T}_{(\beta, 3)}$. 
Since $\beta$ is the product of $b$ and  $\Delta^2$, 
we get $[c_{(\beta,A)}] =(1,0)+ (0,1) =(1,1)$. 
The first term $(1,0)$ comes from $[c_A] $ and the second one $(0,1)$ comes from $\Delta^2$. 
Similarly we have 
$[c_{(\beta, 3)}] = (2,1)+ (1,0)=(3,1)$. 
By (\ref{equation_skew}) we have 
$F_\beta = E_{(\xi,3)}$ and $E_{(\beta,3)} = F_\xi $. 
We also have 
$\mathcal{T}_{(\beta, A)} = \mathcal{T}_{(\xi, 3)}$ and 
$\mathcal{T}_{(\beta,3)} = \mathcal{T}_{(\xi,A)}$. 
Since 
$$p [F_{\beta}] + [E_{(\beta,3)}] = [F_\xi]+ p[E_{(\xi,3)}] =[ F_\xi+ p E_{(\xi,3)}]
= (1,p)  \in C_{(\xi,3)},$$ 
the stable foliation 
$\mathcal{F}_{(1,p)}$ associated with an integral class $(1,p) \in C_{(\xi,3)}$  is the stable foliation 
associated with $(p,1) \in C_{(\beta, 3)}$. 
By (\ref{equation_slope}) for $(x,y)=(p,1)$ 
$$[\partial_{(\beta,A)} (F_{\xi}+ p E_{(\xi,3)})]= (-1,p), \hspace{3mm} 
[\partial_{(\beta, 3)} (F_{\xi}+ p E_{(\xi,3)})] = (-p,1) \in {\Bbb Z}^2.$$
From 
$\mathfrak{i}([c_{(\beta,A)}], [\partial_{(\beta,A)}(F_{\xi}+ p E_{(\xi,3)})])= p+1$ and 
$\mathfrak{i}([c_{(\beta,3)}], [\partial_{(\beta,3)}(F_{\xi}+ p E_{(\xi,3)})])= p+3$ 
together with Lemma~\ref{lem_foliation}, 
one sees that 
$\mathcal{F}_{(1,p)}$ associated with  $(1,p) \in C_{(\xi,3)}$ 
is $(p+1)$-pronged at $\partial_{(\beta,A)}F_{(1,p)} (= \partial_{(\xi,3)} F_{(1,p)})$, 
and is $(p+3)$-pronged at $\partial_{(\beta,3)}F_{(1,p)} (= \partial_{(\xi,A)} F_{(1,p)})$.

Since $g_p: M_{\xi} \rightarrow M_{\xi_p}$ 
sends $F_{(1,p)}$ to $F_{\xi_p}$ 
the stable foliation $\mathcal{F}_{(1,p)}$ associated with $(1,p) \in C_{(\xi,3)}$ is identified with $\mathcal{F}_{\xi_p}$ via $g_p$.  
The boundary components $\partial_{(\xi,A)} F_{(1,p)}$ and $\partial_{(\xi,3)} F_{(1,p)}$ correspond to 
$\partial_{(\xi_p,A)} F_{\xi_p}$ and $\partial_{(\xi_p,3+2p)} F_{\xi_p}$ respectively 
via $g_p$. 
Thus $\mathcal{F}_{\xi_p}$  is $(p+1)$-pronged at 
$\partial_{(\xi_p,3+2p)} F_{\xi_p}$. 
This completes the proof of Step 2. 
\medskip

Next we prove  $\log \delta(PA_{2n+1}) \asymp 1/n$ following the above arguments in Steps 1,2. 
Take an initial braid 
$$\eta = \sigma_1 \sigma_2 \sigma_3 \sigma_4 \sigma_5 \sigma_3 \sigma_4 \sigma_3 \sigma_4  \sigma_5 \sigma_6 \sigma_7 \in B_8.$$ 
It is $4$-increasing with $u(\eta,4)=2$. 
Consider $\eta_p \in B_{8+ 2p}$ obtained from $\eta$ by the disk twist. 
Then $\eta_p^{\bullet}$ is a skew-palindromic braid with odd degree 
 for each $p \ge 1$: 
$$\eta_p^{\bullet} = (\sigma_1 \sigma_2 \cdots \sigma_{4+ 2p}) (\sigma_3 \sigma_4 \cdots \sigma_{6+2p}) \in B_{7+2p}.$$ 
For our purpose it suffices to prove that 
$\{\eta_p^{\bullet}\}$ has a small normalized entropy. 
Following Step 1 we first prove that $\eta$ is pseudo-Anosov and 
$[E_{(\eta, 4)}]$ is a fibered class. 
Consider a pseudo-Anosov braid $b = \sigma^{-1} \sigma_2^6 \Delta^2 \in B_3$ 
which is $3$-increasing with $u(b,3)= 5$. 
For  $\beta = b \Delta^2 $  
Lemma~\ref{lem_ef} tells us that 
$(\mathrm{br} (\beta), \mathrm{cl}(\beta(3)), A_{\beta}) \sim (\mathrm{br}(\gamma), A_{\gamma}, \mathrm{cl}(\gamma(3)))$, 
where $\gamma = \kappa_0 \kappa_1 \cdots \kappa_6 \Delta_2^2 \in B_8$. 
One sees that 
$\gamma$ is conjugate to $\eta$ in $B_8$. 
Since the permutation $\pi_{\eta}$ has a unique fixed point 
it follows that 
 $(\mathrm{br} (\beta), \mathrm{cl}(\beta(3)), A_{\beta}) \sim (\mathrm{br}(\eta), A_{\eta}, \mathrm{cl}(\eta(4)))$. 
 This expression says that 
 $E_{(\eta, 4)}= F_{\beta}$ is a fiber of a fibration on the hyperbolic  $M_b \simeq M_{\eta}$ over $S^1$. 
 Hence $\eta$ is pseudo-Anosov. 
 We conclude that $\{\eta_p\}$ has a small normalized entropy. 
 
 Following Step 2 one sees that 
 $\mathcal{F}_{\eta_p}$ is $(p+2)$-pronged at $\partial_{(\eta_p, 4+ 2p)}F_{\eta_p}$ for $p \ge 1$. 
 Thus $\eta_p^{\bullet}$ is  pseudo-Anosov with the same dilatation as $\eta_p$. 
 This completes the proof. 
\end{proof}

\subsection{Spin mapping class groups}

\begin{center}
\begin{figure}
\includegraphics[width=2.8in]{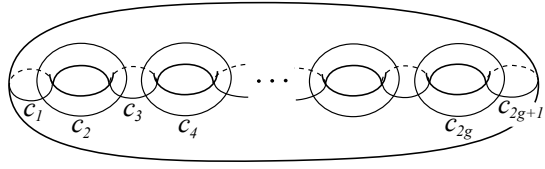} 
\caption{Simple closed curve $C_j$ on $\Sigma_g$.}
\label{fig_gen}
\end{figure}
\end{center}

In this section we prove Theorem~\ref{thm_spin}. 
We first recall a connection between $\mathcal{H}(\Sigma_g)$ and $\mathrm{Mod}(\Sigma_{0, 2g+2})$. 
Let $t_j \in \mathrm{Mod}(\Sigma_g)$ for $1 \le j \le 2g+1$ be the right-handed Dehn twist about the simple closed curve $C_j$ 
as in Figure~\ref{fig_gen}. 
Birman-Hilden~\cite{BH71} proved that 
$\mathcal{H}(\Sigma_g)$ is generated by $t_1, t_2, \ldots, t_{2g+1}$. 
In fact they prove that 
\begin{eqnarray*}
Q: \mathcal{H}(\Sigma_g) &\rightarrow& \mathrm{Mod}(\Sigma_{0, 2g+2})
\\
t_j &\mapsto & \mathfrak{t}_j
\end{eqnarray*}
sending $t_j$ to the right-handed half twist $\mathfrak{t}_j$ (see Section~\ref{subsection_frombraids}) 
is well-defined and it is a surjective homomorphism 
whose kernel is generated by the involution $\iota= [\mathcal{I}]$ as in Figure~\ref{fig_basis}. 
Using the relation between 
$\mathrm{Mod}(\Sigma_{0,2g+2}) $ and $SB_{2g+2}$ 
we have 
$$\mathcal{H}(\Sigma_g) / \langle \iota \rangle \simeq \mathrm{Mod}(\Sigma_{0,2g+2}) 
\simeq SB_{2g+2}/ \langle \Delta^2 \rangle.$$
It is well-known that 
$\phi \in \mathcal{H}(\Sigma_g)$ is pseudo-Anosov if and only if 
$Q(\phi)$ is pseudo-Anosov and in this case 
$\lambda(\phi) = \lambda(Q(\phi))$ holds. 
The following lemma is useful to find elements of the odd/even spin mapping class groups.

\begin{lem}[Theorem~6.1 in \cite{Hirose05} for (1), Theorem~3.1 in \cite{Hirose02} for (2)]
\label{lem_spin}
Suppose that  $g \ge 3$.  
\begin{enumerate}
\item[(1)] 
$ t_2,\  t_3,\  t_{j+1}t_j t_{j+1}^{-1},\  t_k^2 \in \mathrm{Mod}_g[\mathfrak{q}_1] $ 
for $4 \le j \le 2g$ and $1 \le k \le 2g+1$. 

\item[(2)] 
$t_{j+1}t_j t_{j+1}^{-1}, \ t_k^2 \in \mathrm{Mod}_g[\mathfrak{q}_0] $ 
for $1 \le j \le 2g$ and $1 \le k \le 2g+1$.
\end{enumerate}
\end{lem}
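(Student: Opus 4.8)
The plan is to reduce everything to the action of Dehn twists on $H_1(\Sigma_g;\mathbb{Z}_2)$ together with the behaviour of a quadratic form under that action. Membership $\phi \in \mathrm{Mod}_g[\mathfrak{q}]$ depends only on $\phi_*$ on $H_1(\Sigma_g;\mathbb{Z}_2)$ through the condition $\mathfrak{q}\circ\phi_* = \mathfrak{q}$. For a simple closed curve $C$ with class $c=[C]\in H_1(\Sigma_g;\mathbb{Z}_2)$, the twist acts by the transvection $(t_C)_* v = v + (v,c)_2\,c$. The computation I would record first is the effect on $\mathfrak{q}$: using the defining identity $\mathfrak{q}(v+w)=\mathfrak{q}(v)+\mathfrak{q}(w)+(v,w)_2$ one gets, for every $v$,
\[
\mathfrak{q}\bigl((t_C)_* v\bigr) = \mathfrak{q}(v) + (v,c)_2\bigl(\mathfrak{q}(c)+1\bigr),
\]
so (for $c\neq 0$) $t_C \in \mathrm{Mod}_g[\mathfrak{q}]$ if and only if $\mathfrak{q}(c)=1$.

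First I would dispose of the squares. Since the mod-$2$ intersection form is alternating, $(t_C^2)_* v = v + 2(v,c)_2 c = v$ in $H_1(\Sigma_g;\mathbb{Z}_2)$, that is, $t_C^2$ acts trivially mod $2$. Hence $\mathfrak{q}\circ (t_C^2)_* = \mathfrak{q}$ for every quadratic form and every curve, which yields $t_k^2 \in \mathrm{Mod}_g[\mathfrak{q}_1]$ and $t_k^2 \in \mathrm{Mod}_g[\mathfrak{q}_0]$ for all $1\le k\le 2g+1$ simultaneously, settling that part of both (1) and (2).

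Next the conjugates. Using $\phi t_C \phi^{-1}=t_{\phi(C)}$, we have $t_{j+1}t_j t_{j+1}^{-1}=t_{t_{j+1}(C_j)}$, and the transvection formula gives $[t_{j+1}(C_j)] = [C_j] + ([C_j],[C_{j+1}])_2[C_{j+1}] = [C_j]+[C_{j+1}]$, since consecutive chain curves meet once. By the criterion above this conjugate lies in $\mathrm{Mod}_g[\mathfrak{q}]$ iff $\mathfrak{q}([C_j]+[C_{j+1}])=1$, which (using $([C_j],[C_{j+1}])_2=1$ and the defining identity) is equivalent to $\mathfrak{q}([C_j])=\mathfrak{q}([C_{j+1}])$. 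Thus the whole lemma reduces to tabulating $\mathfrak{q}_0([C_j])$ and $\mathfrak{q}_1([C_j])$. I would read the classes $[C_j]$ off Figures~\ref{fig_basis}(2) and \ref{fig_gen} in the basis $\{x_i,y_i\}$; for the standard chain one finds $\mathfrak{q}_0([C_j])=0$ for all $j$, while $\mathfrak{q}_1([C_j])=1$ exactly for the small indices $j\in\{1,2,3\}$ and $\mathfrak{q}_1([C_j])=0$ for $j\ge 4$. Feeding this in: $\mathfrak{q}_0([C_j])=\mathfrak{q}_0([C_{j+1}])$ holds for all $j$, giving every conjugate in (2); $\mathfrak{q}_1([C_2])=\mathfrak{q}_1([C_3])=1$ gives the single twists $t_2,t_3$ in (1); and $\mathfrak{q}_1([C_j])=\mathfrak{q}_1([C_{j+1}])=0$ for $j\ge 4$ gives the conjugates in (1). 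The range beginning at $j=4$ is precisely where the exceptional value $\mathfrak{q}_1([C_3])=1$ of the distinguished first handle no longer interferes.

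The main obstacle will be the bookkeeping in this last step: correctly recording the mod-$2$ homology classes of the chain curves $C_j$ from the figures (including the single relation among the $2g+1$ of them) and evaluating $\mathfrak{q}_0,\mathfrak{q}_1$ on the resulting sums, since a mislabelling of even versus odd curves, or of the distinguished handle, would shift exactly which twists qualify. Everything else is the formal transvection computation, valid whenever the listed curves exist; the hypothesis $g\ge 3$ is inherited from the cited context and is not needed for these membership assertions.
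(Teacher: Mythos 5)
Your proof is correct, and it is genuinely different from what the paper does: the paper gives no argument for Lemma~\ref{lem_spin} at all, but simply quotes it from Theorem~6.1 of \cite{Hirose05} and Theorem~3.1 of \cite{Hirose02}, where the corresponding statements arise from determining generating sets for the stabilizers of quadratic forms in the context of diffeomorphisms of surfaces embedded in $4$-manifolds. Your route is a direct, self-contained mod-$2$ verification, and every step checks out: the transvection identity $\mathfrak{q}\bigl((t_C)_* v\bigr) = \mathfrak{q}(v) + (v,c)_2\bigl(\mathfrak{q}(c)+1\bigr)$ is correct, so $t_C$ preserves $\mathfrak{q}$ iff $\mathfrak{q}(c)=1$ for $c \neq 0$ (nondegeneracy of $(\cdot,\cdot)_2$ gives the ``only if''); squares of twists act trivially on $H_1(\Sigma_g;\mathbb{Z}_2)$, disposing of all $t_k^2$ for both forms at once; and $t_{j+1}t_jt_{j+1}^{-1} = t_{t_{j+1}(C_j)}$ with $[t_{j+1}(C_j)]=[C_j]+[C_{j+1}]$ reduces the conjugates to the condition $\mathfrak{q}([C_j])=\mathfrak{q}([C_{j+1}])$. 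The one input you assert rather than derive is the table of values on the chain, but it is correct: for the chain of Figure~\ref{fig_gen} in the basis of Figure~\ref{fig_basis}(2) one has, e.g., $[C_1]=x_1$, $[C_2]=y_1$, $[C_3]=x_1+x_2$, $[C_{2i}]=y_i$, $[C_{2i+1}]=x_i+x_{i+1}$, giving $\mathfrak{q}_0([C_j])=0$ for all $j$ and $\mathfrak{q}_1([C_j])=1$ exactly for $j\in\{1,2,3\}$ (note $\mathfrak{q}_1(x_1+x_2)=1+0+(x_1,x_2)_2=1$); moreover the conclusion is insensitive to the labeling ambiguity you worry about, since swapping the roles of $x_1$ and $y_1$ does not change the table because $\mathfrak{q}_1(x_1)=\mathfrak{q}_1(y_1)=1$. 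Your computation in fact yields slightly more than the lemma states ($t_1$, and the conjugates with $j=1,2$, also lie in $\mathrm{Mod}_g[\mathfrak{q}_1]$), which is harmless since the lemma claims membership, not a complete list. The trade-off between the two routes: the cited theorems are much stronger statements (explicit generating sets for the odd and even spin mapping class groups, which is also where the hypothesis $g\ge 3$ genuinely lives), of which the lemma is an immediate corollary; your argument buys a short, elementary, and self-contained proof of exactly the membership assertions the paper actually uses.
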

By the above result of Birman-Hilden, 
all mapping classes in Lemma~\ref{lem_spin} are  elements of $\mathcal{H} (\Sigma_g)$. 
Using the braid relations: 
$t_i t_j = t_j t_i$ if $|i - j| \ge 2$ and 
$ t_j t_{j+1} t_j = t_{j+1} t_j t_{j+1}$ for $1 \le j \le 2g$,  
we have 
$$t_j t_{j+1} t_j^{-1} = t_{j+1}^{-1} t_j t_{j+1} = t_{j+1}^{-2} ( t_{j+1}t_j t_{j+1}^{-1}) t_{j+1}^2.$$ 
Thus Lemma~\ref{lem_spin} tells us that 
$t_j t_{j+1} t_j^{-1} \in \mathrm{Mod}_g[\mathfrak{q}_1] $ 
for $4 \le j \le 2g$ and 
$t_j t_{j+1} t_j^{-1} \in \mathrm{Mod}_g[\mathfrak{q}_0] $ 
for $1 \le j \le 2g$.

\begin{center}
\begin{figure}
\includegraphics[width=5.5in]{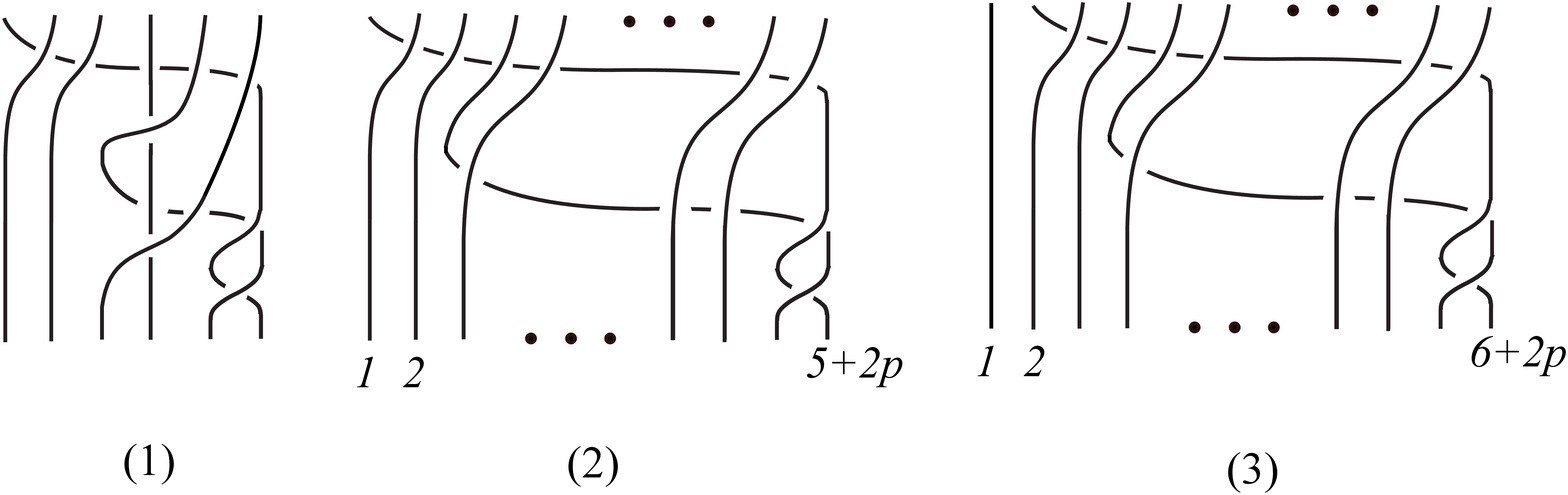} 
\caption{
(1) $o \in B_6$. 
(2) $o_p^{\bullet} \in B_{5+2p}$. 
(3) $sh(o_p^{\bullet}) \in B_{6+2p}$.}
\label{fig_oddspin}
\end{figure}
\end{center}

The following spin mapping classes  are used in 
the proof of Theorem~\ref{thm_spin}.

\begin{lem}
\label{lem_oddeven}
Let $p \ge 1$ be an integer. 
\begin{enumerate}
\item[(1)] 
$t_2 t_3 (t_4 t_5 \cdots t_{5+2p})^2 t_{5+2p} \in \mathrm{Mod}_g[\mathfrak{q}_1] $ 
for any $g \ge p+2$. 

\item[(2)] 
$(t_2 t_3 \cdots t_{5+2p})^2 t_{5+2p}^3 \in \mathrm{Mod}_g[\mathfrak{q}_0]$ 
for any $g \ge p+2$.  
\end{enumerate}
\end{lem}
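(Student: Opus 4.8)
The plan is to use that $\mathrm{Mod}_g[\mathfrak{q}]$ is a subgroup and to exhibit each element as a product of the generators supplied by Lemma~\ref{lem_spin} and the conjugation identities derived just after it. Write $n=5+2p$; note $g\ge p+2$ forces $n\le 2g+1$, so all of $t_2,\ldots,t_n$ and the conjugate $t_{n-1}t_nt_{n-1}^{-1}$ are legitimate. For (1) I first peel off $t_2$ and $t_3$, which lie in $\mathrm{Mod}_g[\mathfrak{q}_1]$ by Lemma~\ref{lem_spin}(1), reducing to $(t_4\cdots t_n)^2 t_n$. For (2) I absorb a factor $t_n^2\in\mathrm{Mod}_g[\mathfrak{q}_0]$ from $t_n^3$, reducing to $(t_2\cdots t_n)^2 t_n$. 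Both cases thus reduce to the single claim that, for a sub-chain $\delta=t_a t_{a+1}\cdots t_b$ with $(a,b)=(4,n)$ resp. $(2,n)$, one has $\delta^2 t_b\in\mathrm{Mod}_g[\mathfrak{q}]$.

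The key is a braid-relation identity in $\mathcal{H}(\Sigma_g)$. Using the shift relation $\delta t_j\delta^{-1}=t_{j+1}$ for $a\le j\le b-1$ together with the braid relations among $t_a,\ldots,t_b$, I rewrite
\[
\delta^2 t_b=N\,t_b^2,\qquad N:=(t_{a+1}\cdots t_b)(t_a t_{a+1}\cdots t_b).
\]
Since $t_b^2\in\mathrm{Mod}_g[\mathfrak{q}]$ is a square of a Dehn twist, everything comes down to showing $N\in\mathrm{Mod}_g[\mathfrak{q}]$. A further application of the braid relations and the commutations $[t_i,t_j]=1$ ($|i-j|\ge 2$) puts $N$ in the telescoped form $N=t_a(t_{a+1}t_a)(t_{a+2}t_{a+1})\cdots(t_b t_{b-1})$.

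I then rewrite $N$ as a product of squares $t_k^2$ and adjacent conjugates $t_j t_{j+1}t_j^{-1}$ by repeatedly using $t_j t_{j+1}t_j=(t_j t_{j+1}t_j^{-1})t_j^2$ and $t_j t_{j+1}t_j=t_{j+1}t_j t_{j+1}$. For instance, when $p=1$ (so $a=4$, $b=7$) this yields
\[
N=(t_4t_5t_4^{-1})\,t_4^2\,(t_6t_7t_6^{-1})\,(t_5t_6t_5^{-1})\,t_5^2,
\]
and the general case follows by the same two braid moves, organized by the recursion $N_{a,b}=(t_{a+1}t_a)\,N_{a+1,b}$. Every factor produced is a square or an adjacent conjugate whose indices lie in $[a,b]$; since $a\ge 4$ in case (1), $a\ge 2$ in case (2), and $b=n\le 2g+1$, all of them are covered by Lemma~\ref{lem_spin}(1) resp.\ (2) together with the conjugation identities. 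Hence $N\in\mathrm{Mod}_g[\mathfrak{q}]$, and the reductions above finish both statements.

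The main obstacle is the bookkeeping in this last step: because a single twist $t_j$ (for $j\ge 4$ in the $\mathfrak{q}_1$ case, and for every $j$ in the $\mathfrak{q}_0$ case) does \emph{not} preserve the form, each regrouping must keep complete ``square'' and ``conjugate'' blocks intact, and one must verify the indices never escape the ranges allowed by Lemma~\ref{lem_spin} — in case (1) the smallest conjugate produced is $t_4t_5t_4^{-1}$, sitting exactly at the boundary $j=4$. As an independent check (and a fallback should the word combinatorics grow unwieldy for large $p$), one can instead compute the action of each element on $H_1(\Sigma_g;\mathbb{Z}_2)$ through the transvection formula $v\mapsto v+(v,[C_j])_2\,[C_j]$, with $[C_{2i}]=y_i$ and $[C_{2i-1}]=x_{i-1}+x_i$, and verify $\mathfrak{q}\circ\phi_*=\mathfrak{q}$ directly on the basis $\{x_i,y_i\}$.
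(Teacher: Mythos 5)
Your proof is correct, and it takes a genuinely different route from the paper's. The paper argues by induction on $p$: the base case $p=1$ is an explicit factorization into the generators of Lemma~\ref{lem_spin} (e.g.\ $t_2t_3(t_4t_5t_6t_7)^2t_7=t_2\cdot t_3\cdot t_4t_5t_4^{-1}\cdot t_4^2\cdot t_6t_7t_6^{-1}\cdot t_6t_5t_6^{-1}\cdot t_6^2\cdot t_7^2$), and the inductive step writes the level-$p$ element as the level-$(p-1)$ element times a correction word $t_{5+2(p-1)}^{-2}\cdot t_{4+2p}t_{5+2p}t_{5+2(p-1)}t_{4+2p}\cdot t_{5+2p}^{2}$ (resp.\ with exponents $-4$ and $4$ in case (2)), handled by the identity $t_jt_{j+1}t_{j-1}t_j=(t_jt_{j+1}t_j^{-1})(t_jt_{j-1}t_j^{-1})t_j^2$. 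You instead give a closed-form factorization uniform in $p$: your shift identity $\delta(t_a\cdots t_{b-1})=(t_{a+1}\cdots t_b)\delta$ correctly yields $\delta^2t_b=Nt_b^2$ with $N=(t_{a+1}\cdots t_b)(t_a\cdots t_b)$, the telescoped form $N=t_a(t_{a+1}t_a)(t_{a+2}t_{a+1})\cdots(t_bt_{b-1})$ checks out (push each $t_j$, $j\le b-1$, through $\delta$ and commute), and the block decomposition works with the right parity, since $b-a$ equals $2p+1$ resp.\ $2p+3$, both odd: after the leading block $t_a(t_{a+1}t_a)=(t_at_{a+1}t_a^{-1})t_a^2$ the remaining even number of pairs splits into chunks via $(t_jt_{j-1})(t_{j+1}t_j)=(t_jt_{j+1}t_j^{-1})(t_{j-1}t_jt_{j-1}^{-1})t_{j-1}^2$, exactly as in your $p=1$ display. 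Your index bookkeeping is also right: all conjugates have $j\ge4$ in case (1) and $j\ge2$ in case (2), and the top index $5+2p\le 2g+1$ is exactly the hypothesis $g\ge p+2$. The one soft spot is that the general chunking is asserted rather than written out, and the recursion $N_{a,b}=(t_{a+1}t_a)N_{a+1,b}$ alone does not finish it (as you note, $t_{a+1}t_a$ is not an allowed block — you must apply the recursion twice and invoke the two-pair identity, descending to the base $N_{b-1,b}=(t_{b-1}t_bt_{b-1}^{-1})t_{b-1}^2$); this closes routinely. Comparing the two: the paper's induction is shorter to verify step by step and ties the genus bound to the inductive step, while your normal form handles all $p$ and both quadratic forms by one computation and makes transparent exactly which generators of Lemma~\ref{lem_spin} occur; your mod-$2$ homology fallback is sound in principle but depends on identifying the classes $[C_j]$ in the basis of Figure~\ref{fig_basis}, which the paper never makes explicit, so it should remain a check rather than the main argument.
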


\begin{proof}
We prove the lemma by the induction on $p$. 
We first prove (1). 
When $p =1$ 
$$t_2 t_3 (t_4 t_5 t_6 t_7)^2 t_7 
= t_2 \cdot t_3 \cdot t_4 t_5 t_4^{-1} \cdot t_4^2 \cdot t_6 t_7 t_6^{-1}
\cdot t_6 t_5 t_6^{-1} \cdot t_6^2 \cdot t_7^2  $$ 
which is an element of 
$\mathrm{Mod}_g[\mathfrak{q}_1] $ for $g \ge 3$ 
by Lemma~\ref{lem_spin}(1). 

Assume that $t_2 t_3 (t_4 t_5 \cdots t_{5+2(p-1)})^2 t_{5+2(p-1)} 
\in \mathrm{Mod}_g[\mathfrak{q}_1]$ for $g \geq p-1+2$. 
By the braid relations one verifies that 
\begin{eqnarray*}
&\ &  t_2 t_3 (t_4 t_5 \cdots t_{4+2(p-1)} t_{5+2(p-1)} t_{4+2p} t_{5+2p})^2  t_{5+2p}
\\
&=&  t_2 t_3 (t_4 t_5 \cdots t_{5+2(p-1)})^2 t_{5+2(p-1)} \cdot 
t_{5+2(p-1)}^{-2} \cdot t_{4+2p} t_{5+2p} t_{5+2(p-1)} t_{4+2p} \cdot t_{5+2p}^2. 
\end{eqnarray*}
Note that 
$t_j t_{j+1} t_{j-1} t_j= (t_j t_{j+1} t_j^{-1}) ( t_j t_{j-1} t_j^{-1})  t_j^2$. 
Then 
the assumption together with Lemma~\ref{lem_spin}(1)  implies that 
 $t_2 t_3 (t_4 t_5 \cdots t_{5+2p})^2 t_{5+2p} \in \mathrm{Mod}_g[\mathfrak{q}_1]$ for  $g \geq p+2$.  
 
 Let us turn to  (2). 
When $p=1$ 
$$(t_2 t_3 t_4 t_5 t_6 t_7)^2  t_7^3
= t_2 t_3 t_2^{-1} \cdot t_2^2 \cdot  t_4 t_3 t_4^{-1} \cdot t_4 t_5t_4^{-1} \cdot t_4^2 \cdot
t_6 t_7 t_6^{-1} \cdot t_6 t_5 t_6^{-1} \cdot t_6^2  \cdot  t_7^2
\cdot  t_7^2
$$
which is an element of $ \mathrm{Mod}_g[\mathfrak{q}_0]$ for $g \ge 3$. 

Assume that 
$(t_2 t_3 \cdots t_{5+2(p-1)})^2 t_{5+2(p-1)}^3 
\in \mathrm{Mod}_g[\mathfrak{q}_0]$ for any $g \geq p-1+2$. 
By the braid relations again, we have 
\begin{eqnarray*}
&\ & (t_2 t_3  \cdots t_{4+2(p-1)} t_{5+2(p-1)} t_{4+2p} t_{5+2p})^2 t_{5+2p}^3
\\
&=& (t_2 t_3 \cdots t_{5+2(p-1)})^2 t_{5+2(p-1)}^3 \cdot 
t_{5+2(p-1)}^{-4} \cdot t_{4+2p} t_{5+2p} t_{5+2(p-1)} t_{4+2p} \cdot t_{5+2p}^4. 
\end{eqnarray*}
By the assumption together with Lemma~\ref{lem_spin}(2) 
we have 
$(t_2 t_3  \cdots t_{5+2p})^2 t_{5+2p}^3 
\in \mathrm{Mod}_g[\mathfrak{q}_0]$ for $g \geq p+2$.  
This completes the proof. 
\end{proof}

The {\it shift map} $sh: B_n \rightarrow B_{n+1}$ 
is an injective homomorphism sending 
$\sigma_j$ to $\sigma_{j+1}$ for $1 \le j \le n-1$. 
Suppose that $b \in B_n$ is pseudo-Anosov. 
Then $S(sh(b)) \in SB_{n+1}$ is pseudo-Anosov  with the same dilatation as $b$ 
since 
$\widehat{\Gamma}(S(sh(b)) )$ is conjugate to $f_b=  \mathfrak{c}(\Gamma(b))$ in $\mathrm{Mod}(\Sigma_{0,n+1})$. 
(See Section~\ref{subsection_frombraids} for definitions $\Gamma$, $\widehat{\Gamma}$.)
We finally prove Theorem~\ref{thm_spin}. 

\begin{proof}[Proof of Theorem~\ref{thm_spin}(1)] 
Consider 
$o = \sigma_1 \sigma_2 \sigma_3 \sigma_4 \sigma_5 \sigma_3^2 \sigma_4 \sigma_5 \sigma_3 \sigma_5  \in B_6$ 
which is $4$-increasing with $u(o, 4)=2$ (Figure~\ref{fig_oddspin}). 
The braid $o_p $ is obtained from $o$ by disk twist for each $p \ge 1$. 
Then 
\begin{eqnarray*}
o_p^{\bullet} &=& 
\sigma_1 \sigma_2 (\sigma_3 \sigma_4 \cdots \sigma_{4+2p})^2 \sigma_{4+2p} \in B_{5+2p}, 
\\
S(sh(o_p^{\bullet})) &=&\sigma_2 \sigma_3 (\sigma_4 \sigma_5 \cdots \sigma_{5+2p})^2 \sigma_{5+2p} \in SB_{6+2p}.
\end{eqnarray*}
By Lemma~\ref{lem_oddeven}(1) 
$t_2 t_3 (t_4 t_5 \cdots t_{5+2p})^2 t_{5+2p} \in \mathrm{Mod}_{p+2}[\mathfrak{q}_1] $ 
for $p \ge 1$, and it is pseudo-Anosov if 
$S(sh(o_p^{\bullet}))$ is pseudo-Anosov. 
In this case they have the same dilatation. 
Thus by the relation between $o_p^{\bullet}$ and $S(sh(o_p^{\bullet}))$ 
it is enough to prove that $\{o_p^{\bullet}\}$  has a small normalized entropy. 
We first claim that $\{o_p\}$ has a small normalized entropy. 
By Theorem~\ref{thm_sequence}(1) 
it suffices to prove that  $o$ is a pseudo-Anosov and 
$[E_{(o,4)}]$ is a fibered class. 
Consider a $3$-braid 
$b=  \sigma_1^2 \sigma_2^2 \cdot  \sigma_2^2 \cdot  \sigma_2^2$ 
which is $3$-increasing with $u(b, 3)=3$. 
Let $\beta$ denote $ b \Delta^2$. 
By Lemma~\ref{lem_ef} 
$(\mathrm{br}(\beta), \mathrm{cl}(\beta(3)), A_{\beta}) \sim 
(\mathrm{br}(\gamma), A_{\gamma}, \mathrm{cl}(\gamma(3)))$, 
where $\gamma \in B_6$ is the braid in (\ref{equation_nu33}) 
substituting $\sigma_1^2$, $\emptyset$, $\emptyset$ for $w_1$, $w_2$, $w_3$ respectively. 
In this case  $\gamma $ is conjugate to $o$ in $B_6$. 
Since the permutation $\pi_o$ has a unique fixed point $4$, 
it follows that 
$(\mathrm{br}(\beta), \mathrm{cl}(\beta(3)), A_{\beta}) \sim (\mathrm{br}(o), A_{o}, \mathrm{cl}(o(4)))$.  
This tells us that 
$M_{\beta} \simeq M_o$ 
and 
$[E_{(o,4)}] = [F_{\beta}]$ is a fibered class. 
On the other hand 
$\beta$ is conjugate to $\sigma_1^4 \sigma_2^{-2} \Delta^4$ in $B_3$ which means that $\beta$ is pseudo-Anosov. 
Thus $M_{\beta} \simeq M_o$ is hyperbolic and $o$ is pseudo-Anosov. 

Next we prove that 
$o_p^{\bullet} $ is pseudo-Anosov with the same dilatation as $o_p$ for $p \ge 1$. 
By the same argument as in the proof of Theorem~\ref{thm_skp} 
one sees that $\mathcal{F}_{o_p}$  
is $(p+2)$-pronged at $\partial_{(o_p, 4+2p)} F_{o_p}$. 
Thus   $o_p^{\bullet} $ has the desired property for $p \ge 1$. 
We finish the proof of (1).  

We turn to (2). 
Let us consider  $v= (\sigma_1 \sigma_2 \sigma_3 \sigma_4 \sigma_5)^2 \sigma_1 \sigma_2 \sigma_5^3 \in B_6$ 
which is $3$-increasing with $u(v,3)= 2$. 
Let $v_p \in B_{6+ 2p}$ be the braid obtained from $v$ by the disk twist. 
Then $v_p$ is $(3+2p)$-increasing and 
\begin{eqnarray*}
v_p^{\bullet} &=& (\sigma_1 \sigma_2 \cdots \sigma_{4+2p})^2 \sigma_{4+2p}^3 \in B_{5+2p}, 
\\
S(sh(v_p^{\bullet}))&=& (\sigma_2 \sigma_3 \cdots \sigma_{5+2p})^2 \sigma_{5+2p}^3 \in SB_{6+2p}.
\end{eqnarray*}
By Lemma~\ref{lem_oddeven}(2) 
it is enough to prove that $\{v_p^{\bullet}\}$ has a small normalized entropy. 
To do this we first prove that 
$\{v_p\}$ has a small normalized entropy. 
Consider a pseudo-Anosov $3$-braid 
$$b = \sigma_1^2 \sigma_2^{-2} \Delta^4  = \sigma_1^3 \sigma_2^2 \sigma_1 \Delta^2 = 
\sigma_1^3 \sigma_2^2 \cdot \sigma_1^2 \sigma_2^2 \cdot  \sigma_1 \sigma_2^2$$ 
which is $3$-increasing with $u(b, 3)= 3$. 
Lemma~\ref{lem_ef} tells us that 
for $\beta = b \Delta^2$ we have 
$(\mathrm{br}(\beta), \mathrm{cl}(\beta(3)), A_{\beta}) \sim 
(\mathrm{br}(\gamma), A_ {\gamma}, \mathrm{cl}(\gamma(3)))$, 
where $\gamma \in B_6$ is the braid in (\ref{equation_nu33}) 
substituting $\sigma_1^3$ for $w_1$, $\sigma_1^2$ for $w_2$ and $\sigma_1$ for $w_3$. 
One sees that $\gamma$ is conjugate to $v $ in $B_6$. 
Thus 
$(\mathrm{br}(\beta), \mathrm{cl}(\beta(3)), A_{\beta}) \sim 
(\mathrm{br}(v), A_ {v}, \mathrm{cl}(v(3)))$. 
This implies that $[E_{(v,3)}]= [F_{\beta}]$ is a fibered class of the hyperbolic  $M_{\beta} \simeq M_v$, 
and hence $v$ is pseudo-Anosov. 
By Theorem~\ref{thm_sequence}(1), $\{v_p\}$ has a small normalized entropy.

One sees that  $\mathcal{F}_{v_p}$  is 
$(p+3)$-pronged at $\partial_{(v_p,3+2p)}F_{v_p}$. 
Thus $v_p^{\bullet} $ is pseudo-Anosov with the same dilatation as $v_p$ for $p \ge 1$. 
This  completes the proof. 
\end{proof}


\end{document}